\newcommand*{\mailto}[1]{\href{mailto:#1}{\nolinkurl{#1}}}
\def\theequation{\@arabic\c@equation}
\newcommand{\bbN}{{\mathbb{N}}}
\newcommand{\bbR}{{\mathbb{R}}}
\newcommand{\bbZ}{{\mathbb{Z}}}
\newcommand{\bbC}{{\mathbb{C}}}
\newcommand{\cB}{{\mathcal B}}
\newcommand{\cC}{{\mathcal C}}
\newcommand{\cD}{{\mathcal D}}
\newcommand{\cE}{{\mathcal E}}
\newcommand{\cF}{{\mathcal F}}
\newcommand{\cH}{{\mathcal H}}
\newcommand{\cK}{{\mathcal K}}
\newcommand{\cN}{{\mathcal N}}
\newcommand{\dott}{\,\cdot\,}
\newcommand{\no}{\nonumber}
\newcommand{\lb}{\label}
\newcommand{\f}{\frac}
\newcommand{\ul}{\underline}
\newcommand{\ol}{\overline}
\newcommand{\wti}{\widetilde}
\newcommand{\Oh}{O}
\newcommand{\tr}{\text{\rm{tr}}}
\newcommand{\ran}{\text{\rm{ran}}}
\newcommand{\ind}{\text{\rm{ind}}}
\newcommand{\de}{\text{\rm{def}}}
\newcommand{\dom}{\text{\rm{dom}}}
\newcommand{\bi}{\bibitem}
\numberwithin{equation}{section}
\newtheorem{theorem}{Theorem}[section]
\newtheorem{lemma}[theorem]{Lemma}
\newtheorem{corollary}[theorem]{Corollary}
\theoremstyle{definition}
\newtheorem{definition}[theorem]{Definition}
\newtheorem{hypothesis}[theorem]{Hypothesis}
\newtheorem{remark}[theorem]{Remark}
\newtheorem{example}[theorem]{Example}
\begin{document}

\title[Factorizations of Analytic Operator-Valued Functions]{On Factorizations of Analytic 
Operator-Valued Functions and Eigenvalue Multiplicity Questions}

\author[F.\ Gesztesy]{Fritz Gesztesy} 
\address{Department of Mathematics,
University of Missouri, Columbia, MO 65211, USA}
\email{\mailto{gesztesyf@missouri.edu}}
\urladdr{\url{http://www.math.missouri.edu/personnel/faculty/gesztesyf.html}}

\author[H.\ Holden]{Helge Holden}
\address{Department of Mathematical Sciences,
Norwegian University of
Science and Technology, NO--7491 Trondheim, Norway}
\email{\mailto{holden@math.ntnu.no}}
\urladdr{\href{http://www.math.ntnu.no/~holden/}{http://www.math.ntnu.no/\~{}holden/}}

\author[R.\ Nichols]{Roger Nichols}
\address{Mathematics Department, The University of Tennessee at Chattanooga, 
415 EMCS Building, Dept. 6956, 615 McCallie Ave, Chattanooga, TN 37403, USA}
\email{\mailto{Roger-Nichols@utc.edu}}
\urladdr{\url{http://www.utc.edu/faculty/roger-nichols/index.php}}

\date{\today}
\thanks{Supported in part by the Research Council of Norway.} 
\thanks{R.N. gratefully acknowledges support from an AMS--Simons Travel Grant.} 
\thanks{{\it Integral Eq. Operator Theory} {\bf 82}, 61--94 (2015), Erratum {\bf 85}, 301--302 (2016).}
\subjclass[2010]{Primary: 47A10, 47A75, 47A53. Secondary: 47B10, 47G10.}
\keywords{Factorization of operator-valued analytic functions, multiplicity of eigenvalues, 
index computations for finitely meromorphic operator-valued functions.}

\begin{abstract}
We study several natural multiplicity questions that arise in the context of the 
Birman--Schwinger principle applied to non-self-adjoint operators. In particular, we re-prove 
(and extend) a recent result by Latushkin and Sukhtyaev by employing a different technique 
based on factorizations of analytic operator-valued functions due to Howland. Factorizations 
of analytic operator-valued functions are of particular interest in themselves and again we 
re-derive Howland's results and subsequently extend them.    

Considering algebraic multiplicities of finitely meromorphic operator-valued functions, we 
recall the notion of the index of a finitely meromorphic operator-valued function and use  
that to prove an analog of the well-known Weinstein--Aronszajn formula relating algebraic 
multiplicities of the underlying unperturbed and perturbed operators.  

Finally, we consider pairs of projections for which the difference belongs to the trace class 
and relate their Fredholm index to the index of the naturally underlying Birman--Schwinger 
operator.
\end{abstract}

\maketitle

{\scriptsize{\tableofcontents}}

\section{Introduction} \lb{s1}

One of the principal aims of this paper is to investigate factorizations of analytic operator-valued functions and their connection to eigenvalue multiplicity questions for perturbations of a non-self-adjoint densely defined, closed linear operator $H_0$ in a separable Hilbert space $\cH$ by a 
non-self-adjoint additive perturbation term which formally factors into a product $V_2^*V_1$ 
over an auxiliary Hilbert space, $\cK$:
\begin{equation}\lb{1.1}
V_j:\dom(V_j)\subseteq \cH\rightarrow \cK,\quad j\in \{1,2\},
\end{equation}
with $V_j$ densely defined, closed linear operators.  Under appropriate assumptions on $V_j$, 
$j=1,2$, Kato \cite{Ka66} introduced a technique to define the ``sum,'' $H=H_0 + V_2^*V_1$ indirectly in 
terms of its resolvent $R(z)=(H-zI_{\cH})^{-1}$ and the free resolvent $R_0(z)=(H_0-zI_{\cH})^{-1}$, according to the equation, 
\begin{equation}\lb{1.2}
\begin{split} 
R(z) = R_0(z) - \overline{R_0(z)V_2^*}\big[I_{\cK} - \ol{V_1 R_0(z) V_2^*}\big]^{-1}V_1R_0(z),&\\
z\in \{\zeta\in \rho(H_0)\, |\, 1\in \rho(K(\zeta))\},&
\end{split}
\end{equation}
where $K(\cdot)$ represents an abstract Birman--Schwinger-type operator defined by
\begin{equation}\lb{1.3}
K(z) = -\ol{V_1 R_0(z)V_2^*}, \quad z \in \rho(H_0).
\end{equation}
Kato assumed $H_0$ to be self-adjoint and shows that under appropriate hypotheses on $V_j$, $j=1,2$, \eqref{1.2} defines the resolvent of a densely defined, closed operator $H$ in $\cH$. 

Shortly thereafter, assuming $K(z)$ to be compact for each $z\in \rho(H_0)$, Konno and Kuroda \cite{KK66} established an abstract variant of the Birman--Schwinger principle by proving that 
$z_0\in \rho(H_0)$ is an eigenvalue of $H$ if and only if $1$ is an eigenvalue of the Birman--Schwinger operator $K(z_0)$.  Moreover, one has equality of the corresponding {\it geometric multiplicities} (i.e., the dimensions of the corresponding eigenspaces):  the geometric multiplicity of $z_0$ as an eigenvalue of $H$ coincides with the geometric multiplicity of $1$ as an eigenvalue of $K(z_0)$.  Actually, Konno and Kuroda \cite{KK66} assume that $H_0$ is self-adjoint and 
\begin{equation}\lb{1.4}
(V_1f,V_2g)_{\cK} = (V_2f,V_1g)_{\cK}, \, \text{ for all } \, f,g\in \dom(V_1)\cap\dom(V_2),
\end{equation}
which guarantees that $H$ is self-adjoint as well. However, as shown in \cite{GLMZ05}, the 
construction of $H$ by Kato \cite{Ka66} via \eqref{1.2} and the geometric multiplicity results by Konno and Kuroda \cite{KK66} extend to the case where $H_0$ and $H$ are non-self-adjoint, and, moreover, the compactness assumption on $K(z)$ may be relaxed and replaced by the assumption that $I_{\cK}-K(z)$ is a Fredholm operator in $\cK$ for each $z\in \rho(H_0)$.

The abstract formulation of the Birman--Schwinger principle set forth by Konno and Kuroda  \cite{KK66} (and its extension in \cite{GLMZ05}) yields equality of the geometric multiplicities of a point $z_0\in \rho(H_0)$ as an eigenvalue of $H$ and $1$ as an eigenvalue of the Birman--Schwinger operator $K(z_0)$. Thus, if the operators $H$ and $K(z_0)$ are non-self-adjoint, the algebraic and geometric multiplicities of $z_0$ (resp., $1$) as an eigenvalue of $H$ 
(resp., $K(z_0)$) will differ in general and hence it is entirely natural to inquire about the status 
of the associated {\it algebraic multiplicities} (defined in terms of the dimension of the range of 
the associated Riesz projections).

At first, one might be tempted to conjecture that the algebraic multiplicities of $z_0$ as an eigenvalue of $H$ and $1$ as an eigenvalue of $K(z_0)$ coincide, as with the corresponding geometric multiplicities. However, this is easily dismissed by explicit counterexamples. Therefore, one is forced to search for alternatives. It turns out that in lieu of the algebraic multiplicity of $1$ as an eigenvalue of $K(z_0)$, one instead should consider the algebraic multiplicity of $z_0$ as a 
zero of finite-type of the analytic operator-valued function $I_{\cK}-K(\cdot)$, denoted by $m_a(z_0;I_{\cK}-K(\cdot))$.  Indeed, Latushkin and Sukhtyaev \cite{LS10} have shown, under appropriate assumptions, that the algebraic multiplicity of $z_0$ as an eigenvalue of $H$ coincides with the algebraic multiplicity of $z_0$ as a zero of finite-type of the analytic operator-valued function $I_{\cK}-K(\cdot)$.

Latushkin and Sukhtyaev rely on a Gohberg--Sigal Rouche-type Theorem to arrive at their result.  We recover and slightly extend their result by entirely different means. The approach employed in this paper relies on a factorization technique for analytic operator-valued functions originally due to Howland \cite{Ho71} (of interest in its own right), which shows that, given an analytic family 
$A(\cdot)$ of compact operators 
in $\cH$ defined on a domain $\Omega$ with $z_0\in \Omega$ a weak zero (i.e., $A(z_0)$ is not invertible), if $P$ is any projection onto $\ran(A(z_0))$, and $Q=I_{\cH}-P$, then $A(\cdot)$ may 
be factored according to
\begin{equation}\lb{1.5}
A(z) = [Q - (z-z_0)P]A_1(z),\quad z\in \Omega,
\end{equation}
where $A_1(z)$ is analytic in $\Omega$, $A_1(z)-I_{\cH}$ is compact, and 
\begin{equation}\lb{1.6}
\dim\big(\ran(A_1(z_0))^\perp\big) \leq \dim\big(\ran(A(z_0))^\perp \big).
\end{equation}
Howland's motivation for arriving at such a factorization was to determine necessary and sufficient conditions for a pole of $A(z)^{-1}$ to be simple.  In this paper, we extend Howland's factorization result to analytic families of Fredholm operators, thus relaxing the compactness assumption on $A(\cdot)$, and apply the factorization to algebraic multiplicities within the context of the abstract Birman--Schwinger principle. We emphasize that these considerations, combined with Evans function methods (cf.\ \cite{BLR14}, \cite{GLM07}, \cite{GLZ08} and the extensive literature cited therein), have immediate applications in the area of linear stability theory for nonlinear evolution equations. 

Next, we briefly turn to a summary of the contents of this paper:  In Section \ref{s2}, we recall the method introduced by Kato \cite{Ka66}, and extended in \cite{GLMZ05}, to define additive perturbations $H=H_0+W$ of a non-self-adjoint operator $H_0$ by a non-self-adjoint perturbation term which formally factors according to $W=V_2^*V_1$.  More specifically, the sum $H=H_0+W$ is defined indirectly through a resolvent formalism, with the resolvent of $H$ given by \eqref{1.2}.
Introducing the Birman--Schwinger operator $K(\cdot)$ in \eqref{1.3} and assuming $I_{\cH}-K(z)$ is a Fredholm operator for all $z\in \rho(H_0)$, we recall in Theorem \ref{2.7} an abstract version (of a variant) of the Birman--Schwinger principle due to Konno and Kuroda \cite{KK66} in the case where $H_0$ and $H$ are self-adjoint and $K(z)$ is compact, that states $z_0\in \rho(H_0)$ is an eigenvalue of $H$ if and only if $1$ is an eigenvalue of $K(z_0)$, and the geometric multiplicity of $z_0$ as an eigenvalue of $H$ is finite and coincides with the geometric multiplicity of $1$ as an eigenvalue of $K(z_0)$.  In Section \ref{s3}, we recall the notion of a finitely meromorphic family of operators and the analytic Fredholm theorem in Theorem \ref{t3.3}.  In Theorems \ref{t3.4} and \ref{t3.5}, we extend Howland's factorization to the case of an analytic family $A(\cdot)$ of Fredholm operators and recover Howland's necessary and sufficient condition for a pole of $A(\cdot)^{-1}$ to be simple in Corollary \ref{c3.6}.  Theorems \ref{t3.7} and \ref{t3.8} provide factorizations analogous to those in Theorems \ref{t3.4} and \ref{t3.5} but with the orders of the factors reversed (this appears to be a new result). In Section \ref{s4}, we consider algebraic multiplicities of zeros of analytic operator-valued functions and study their application to the abstract Birman--Schwinger operator $K(\cdot)$.  In Theorem \ref{t3.11}\footnote{We slightly corrected and extended the first paragraph of the proof of Theorem \ref{t3.11}.}, we use the extension of Howland's factorization to reprove and slightly extend the algebraic multiplicity result of Latushkin and Sukhtyaev \cite{LS10}, proving that any $z_0\in \rho(H_0)\cap \sigma(H)$ is a discrete eigenvalue of $H$ if $z_0$ is isolated in $\sigma(H)$.  In this case, $z_0$ is a zero of finite algebraic multiplicity of $I_{\cK}-K(\cdot)$, and the algebraic multiplicity of $z_0$ as an eigenvalue of $H$ equals the algebraic multiplicity of $z_0$ as a zero of $I_{\cK}-A(\cdot)$.  Example \ref{e3.13} shows that the algebraic multiplicity of $z_0$ as an eigenvalue of $H$ need not equal the multiplicity of $1$ as an eigenvalue of $K(z_0)$ in general.  In Section \ref{s5}, we extend Theorem \ref{t3.11} to the case where $K(\cdot)$ is finitely meromorphic and recover an analog of the Weinstein--Aronszajn formula for the case when $H_0$ and $H$ have common discrete eigenvalues.  In our final Section \ref{s6}, we apply some of the results from Sections \ref{s4} and \ref{s5} to ordered pairs of projections $(P,Q)$ for which the difference $P-Q$ belongs to the trace class.

We will use the following notation in this paper. Let $\cH$ and $\cK$
be separable complex Hilbert spaces, $(\cdot,\cdot)_{\cH}$ and
$(\cdot,\cdot)_{\cK}$ the scalar products in $\cH$ and $\cK$ (linear in
the second factor), and
$I_{\cH}$ and $I_{\cK}$ the identity operators in $\cH$ and $\cK$,
respectively. Next, let $T$ be a closed linear operator from
$\dom(T)\subseteq\cH$ to $\ran(T)\subseteq\cK$, with $\dom(T)$
and $\ran(T)$ denoting the domain and range of $T$. The closure of a
closable operator $S$ is denoted by $\ol S$. The kernel (null space) of $T$
is denoted by $\ker(T)$. The spectrum, point spectrum (i.e., the set of 
eigenvalues), and resolvent set of a closed linear operator in $\cH$ will be 
denoted by $\sigma(\cdot)$, $\sigma_p(\cdot)$, and $\rho(\cdot)$; the 
discrete spectrum of $T$ (i.e., points in $\sigma_p(T)$ which are isolated from the rest of 
$\sigma(T)$, and which are eigenvalues of $T$ of finite algebraic multiplicity) is 
abbreviated by $\sigma_d(T)$. 

The Banach spaces of bounded
and compact linear operators in $\cH$ are denoted by $\cB(\cH)$ and
$\cB_\infty(\cH)$, respectively. Similarly, the Schatten--von Neumann
(trace) ideals will subsequently be denoted by $\cB_p(\cH)$,
$p \in [1,\infty)$, and the subspace of all finite rank operators in $\cB_1(\cH)$ will be 
abbreviated by $\cF(\cH)$. Analogous notation $\cB(\cH_1,\cH_2)$,
$\cB_\infty (\cH_1,\cH_2)$, etc., will be used for bounded, compact, etc.,
operators between two Hilbert spaces $\cH_1$ and $\cH_2$. In addition,
$\tr_{\cH}(T)$ denotes the trace of a trace class operator $T\in\cB_1(\cH)$ and
$\det_{p,\cH}(I_{\cH}+S)$ represents the (modified) Fredholm determinant
associated with an operator $S\in\cB_p(\cH)$, $p\in\bbN$ (for $p=1$ we
omit the subscript $1$). 
In addition, $\Phi(\cH)$ denotes the set of bounded Fredholm operators on $\cH$ (i.e., 
the set of operators $T \in \cB(\cH)$ such that $\dim(\ker(T)) < \infty$, $\ran(T)$ is 
closed in $\cH$, and $\dim(\ker(T^*)) < \infty$). The corresponding (Fredholm) index 
of $T \in \Phi(\cH)$ is then given by $\ind(T) = \dim(\ker(T)) - \dim(\ker(T^*))$. 

The symbol $\dotplus$ denotes a direct (but not necessary orthogonal direct) decomposition 
in connection with subspaces of Banach spaces.

Finally, we denote by $D(z_0; r_0) \subset \bbC$ the open disk with center $z_0$ and radius 
$r_0 > 0$, and by $C(z_0; r_0) = \partial D(z_0; r_0)$ the corresponding circle.

\section{Abstract Perturbation Theory} \lb{s2}

In this introductory section, following Kato \cite{Ka66}, Konno and Kuroda
\cite{KK66}, and Howland \cite{Ho70}, we consider a class
of factorable non-self-adjoint perturbations of a given unperturbed
non-self-adjoint operator. We closely follow the treatment in \cite{GLMZ05} 
(in which $H_0$ is explicitly permitted to be non-self-adjoint, cf.\ Hypothesis \ref{h2.1}\,$(i)$ 
below) and refer to the latter for detailed proofs.

We start with our first set of hypotheses.

\begin{hypothesis} \lb{h2.1}
$(i)$ Suppose that $H_0\colon\dom(H_0)\to\cH$,
$\dom(H_0)\subseteq\cH$ is a densely defined, closed, linear operator
in $\cH$ with nonempty resolvent set,
\begin{equation}
\rho(H_0)\neq\emptyset, \lb{2.1}
\end{equation}
$V_1 \colon \dom(V_1)\to\cK$, $\dom(V_1)\subseteq\cH$ a densely defined,
closed, linear operator from $\cH$ to $\cK$, and
$V_2 \colon \dom(V_2)\to\cK$, $\dom(V_2)\subseteq\cH$ a densely
defined, closed, linear operator from $\cH$ to $\cK$ such that
\begin{equation}
\dom(V_1)\supseteq\dom(H_0), \quad \dom(V_2)\supseteq\dom(H_0^*).
\lb{2.2}
\end{equation}
In the following we denote
\begin{equation}
R_0(z)=(H_0-zI_{\cH})^{-1}, \quad z\in \rho(H_0). \lb{2.3}
\end{equation}
$(ii)$ For some (and hence for all) $z\in\rho(H_0)$, the operator
$- V_1R_0(z)V_2^*$, defined on $\dom(V_2^*)$, has a
bounded extension in $\cK$, denoted by $K(z)$,
\begin{equation}
K(z)=-\ol{V_1 R_0(z) V_2^*} \in\cB(\cK). \lb{2.4}
\end{equation}
$(iii)$ $1\in\rho(K(z_0))$ for some $z_0\in \rho(H_0)$.
\end{hypothesis}

That $K(z_0)\in\cB(\cK)$ for some $z_0\in\rho(H_0)$ implies
$K(z)\in\cB(\cK)$ for all $z\in\rho(H_0)$ (as mentioned in
Hypothesis \ref{h2.1}\,(ii)) is an immediate consequence of
\eqref{2.2} and the resolvent equation for $H_0$.

We emphasize that in the case where $H_0$ is self-adjoint, the
following results in Lemma \ref{l2.1}, Theorem \ref{t2.3}, and
Remark \ref{r2.4} are due to Kato \cite{Ka66} (see also \cite{Ho70},
\cite{KK66}). The more general case we consider here requires only
minor modifications, but for the convenience of the reader we will
sketch most of the proofs.

\begin{lemma} \lb{l2.1}
Let $z,z_1,z_2\in\rho(H_0)$. Then  Hypothesis \ref{h2.1} implies the
following facts:
\begin{align}
& V_1 R_0(z)\in\cB(\cH,\cK), \quad \ol{R_0(z) V_2^*}
= \big[V_2 (H_0^* - {\ol z} I_{\cH})^{-1}\big]^*\in
\cB(\cK,\cH), \lb{2.5}
\\ & \ol{R_0(z_1) V_2^*}-\ol{R_0(z_2) V_2^*}
=(z_1-z_2)R_0(z_1)\ol{R_0(z_2) V_2^*} \lb{2.6} \\
& \hspace*{3.35cm} =(z_1-z_2)R_0(z_2)\ol{R_0(z_1) V_2^*}, \lb{2.7} \\
& K(z)=- V_1 \ol{[R_0(z) V_2^*]}, \quad K(\ol z)^*=- V_2\ol{[R_0(\ol z)^* V_1^*]},
\lb{2.8} \\
& \ran\big(\ol{R_0(z) V_2^*}\big)\subseteq \dom(V_1), \quad
\ran\big(\ol{R_0(\ol z)^* V_1^*}\big)\subseteq \dom(V_2), \lb{2.9} \\
& K(z_1)-K(z_2)=(z_2-z_1) V_1 R_0(z_1)\ol{R_0(z_2) V_2^*} \lb{2.10} \\
& \hspace*{2.25cm} =(z_2-z_1) V_1 R_0(z_2)\ol{R_0(z_1) V_2^*}. \lb{2.11}
\end{align}
\end{lemma}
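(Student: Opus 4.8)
The plan is to derive all the stated identities from the two domain inclusions in \eqref{2.2} together with the resolvent equation for $H_0$, proceeding in the order \eqref{2.5}--\eqref{2.11}. First I would establish \eqref{2.5}: since $\dom(V_1)\supseteq\dom(H_0)=\dom(R_0(z))^{-1}\text{-image}$, more precisely $\ran(R_0(z))=\dom(H_0)\subseteq\dom(V_1)$, the operator $V_1 R_0(z)$ is everywhere defined on $\cH$; it is closed as the product of a bounded operator with $V_1$ closed (or one invokes the closed graph theorem after checking closedness), hence $V_1R_0(z)\in\cB(\cH,\cK)$. For the second assertion in \eqref{2.5}, I would note that $V_2(H_0^*-\ol z I_{\cH})^{-1}\in\cB(\cH,\cK)$ by the same argument applied to $H_0^*$ and $V_2$ (using $\dom(V_2)\supseteq\dom(H_0^*)$), and then take adjoints: $\big[V_2(H_0^*-\ol z I_{\cH})^{-1}\big]^*\in\cB(\cK,\cH)$ is a bounded extension of $\big(V_2(H_0^*-\ol z I_{\cH})^{-1}\big)^* \supseteq (H_0-zI_{\cH})^{-1}V_2^* = R_0(z)V_2^*$, so it coincides with $\ol{R_0(z)V_2^*}$.

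Next I would prove \eqref{2.6}--\eqref{2.7} by applying the first resolvent identity $R_0(z_1)-R_0(z_2)=(z_1-z_2)R_0(z_1)R_0(z_2)=(z_1-z_2)R_0(z_2)R_0(z_1)$ to vectors in $\dom(V_2^*)$ and then passing to closures, using that $R_0(z_j)$ is bounded so that $R_0(z_1)\ol{R_0(z_2)V_2^*}=\ol{R_0(z_1)R_0(z_2)V_2^*}$ and similarly with indices swapped. For \eqref{2.8}, the first identity is just \eqref{2.4} rewritten using \eqref{2.5} (the closure can be moved inside since $V_1$ composed with the bounded operator $\ol{R_0(z)V_2^*}$ is already closed on all of $\cK$, by \eqref{2.9}); the second follows by taking adjoints in \eqref{2.4} and using the analog of \eqref{2.5} with the roles of $(H_0,V_1)$ and $(H_0^*,V_2)$ interchanged. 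For \eqref{2.9}, $\ran\big(\ol{R_0(z)V_2^*}\big)\subseteq\dom(V_1)$ is exactly what makes $V_1\ol{[R_0(z)V_2^*]}$ defined on all of $\cK$ in \eqref{2.8}; concretely, $\ol{R_0(z)V_2^*}=\big[V_2(H_0^*-\ol z I_{\cH})^{-1}\big]^*$ maps into $\dom(V_1)$ because for $k\in\cK$ one can write $\ol{R_0(z)V_2^*}k$ as a limit of $R_0(z)V_2^*k_n\in\dom(H_0)\subseteq\dom(V_1)$ with $V_1R_0(z)V_2^*k_n=-K(z)k_n$ convergent, and invoke closedness of $V_1$. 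Finally, \eqref{2.10}--\eqref{2.11} follow by composing \eqref{2.6}--\eqref{2.7} on the left with $V_1$ (bounded when composed with $R_0(z_j)$ by \eqref{2.5}) and using \eqref{2.8}.

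I expect the only genuinely delicate point to be the careful bookkeeping of closures, namely justifying that one may move $V_1$ (an unbounded operator) inside a closure such as in passing from $-\ol{V_1R_0(z)V_2^*}$ to $-V_1\ol{R_0(z)V_2^*}$ in \eqref{2.8}, and in establishing the range inclusions \eqref{2.9}; both rest on the same mechanism—$V_1$ composed with a bounded operator whose range lies in $\dom(V_1)$ is bounded and closed, so its closure adds nothing—but this requires \eqref{2.9}, which must therefore be proved essentially simultaneously with \eqref{2.8} rather than strictly after it. Everything else is a routine application of the resolvent equation and of the identity $(AB)^*\supseteq B^*A^*$ for densely defined operators with $A$ bounded. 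Since the excerpt explicitly says the authors "refer to \cite{GLMZ05} for detailed proofs" and will only "sketch most of the proofs," I would present the argument at the level of indicating these mechanisms rather than writing out every $\eps$.
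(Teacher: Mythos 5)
Your argument is correct. The paper itself gives no proof of Lemma~\ref{l2.1} (the authors refer to \cite{GLMZ05} for details), but the route you outline---establishing \eqref{2.5} via the closed graph theorem and the adjoint identity $\big[V_2(H_0^*-\ol z I_{\cH})^{-1}\big]^*\supseteq R_0(z)V_2^*$, deriving \eqref{2.6}--\eqref{2.7} from the first resolvent equation on $\dom(V_2^*)$ followed by closure, proving \eqref{2.8} and \eqref{2.9} simultaneously via the closedness of $V_1$ applied to the convergent pair $R_0(z)V_2^*k_n\to\ol{R_0(z)V_2^*}k$, $V_1R_0(z)V_2^*k_n=-K(z)k_n\to -K(z)k$, and then left-composing \eqref{2.6}--\eqref{2.7} with $V_1$ to obtain \eqref{2.10}--\eqref{2.11}---is precisely the standard one, and you correctly flag the only subtle point, the interdependence of \eqref{2.8} and \eqref{2.9}.
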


Next, following Kato \cite{Ka66}, one introduces
\begin{align}
\begin{split}
& R(z)=R_0(z)-\ol{R_0(z) V_2^*}[I_{\cK}-K(z)]^{-1} V_1 R_0(z), \lb{2.13} \\
& \hspace*{2.65cm} z\in\{\zeta\in\rho(H_0)\,|\, 1\in\rho(K(\zeta))\}.
\end{split}
\end{align}

\begin{theorem} \lb{t2.3}
Assume Hypothesis \ref{h2.1} and suppose
$z\in\{\zeta\in\rho(H_0)\,|\, 1\in\rho(K(\zeta))\}$. Then, $R(z)$
defined in \eqref{2.13} defines a densely defined, closed, linear
operator $H$ in $\cH$ by
\begin{equation}
R(z)=(H-zI_{\cH})^{-1}. \lb{2.14}
\end{equation}
Moreover,
\begin{equation}
V_1 R(z),  V_2 R(z)^* \in \cB(\cH,\cK) \lb{2.15}
\end{equation}
and
\begin{align}
R(z)&=R_0(z)-\ol{R(z) V_2^*} V_1 R_0(z) \lb{2.16} \\
&=R_0(z)-\ol{R_0(z) V_2^*} V_1 R(z).   \lb{2.17}
\end{align}
Finally, $H$ is an extension of
$(H_0 + V_2^* V_1)|_{\dom(H_0)\cap\dom(V_2^* V_1)}$
$($the latter intersection domain may consist of $\{0\}$ only$)$,
\begin{equation}
H\supseteq (H_0 + V_2^* V_1)|_{\dom(H_0)\cap\dom(V_2^* V_1)}. \lb{2.18}
\end{equation}
\end{theorem}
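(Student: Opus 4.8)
The plan is to establish the four assertions in the order \eqref{2.14}, \eqref{2.15}, \eqref{2.16}--\eqref{2.17}, \eqref{2.18}, leaning on the facts collected in Lemma \ref{l2.1}. First I would verify that $R(z)$ as defined in \eqref{2.13} actually \emph{is} a resolvent, i.e.\ that there exists a (necessarily unique) closed densely defined operator $H$ with $R(z) = (H - zI_{\cH})^{-1}$. The standard route is to check that $\{R(z)\}$ satisfies the resolvent equation on the set $\Omega := \{\zeta \in \rho(H_0) \mid 1 \in \rho(K(\zeta))\}$, that each $R(z)$ is injective, and that $\ran(R(z))$ is a fixed ($z$-independent) dense subspace of $\cH$; one then \emph{defines} $H$ on that range by $H := zI_{\cH} + R(z)^{-1}$ and shows the definition is independent of $z \in \Omega$. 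Injectivity and the resolvent identity are algebraic consequences of \eqref{2.4}, the second resolvent identity for $R_0$, and the identities \eqref{2.6}--\eqref{2.11}; in particular \eqref{2.10}--\eqref{2.11} are exactly what is needed to massage the difference $R(z_1) - R(z_2)$ into the form $(z_1 - z_2)R(z_1)R(z_2)$ after expanding $[I_\cK - K(z_1)]^{-1} - [I_\cK - K(z_2)]^{-1}$. Density of the range will follow from $\Omega \neq \emptyset$ (Hypothesis \ref{h2.1}$(iii)$) together with the fact that $\dom(H_0) \subseteq \ran(R(z))$, which drops out of the computation below.

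Next, for \eqref{2.15}, note that $V_1 R_0(z) \in \cB(\cH,\cK)$ and $\ol{R_0(z)V_2^*} \in \cB(\cK,\cH)$ by \eqref{2.5}, and $[I_\cK - K(z)]^{-1} \in \cB(\cK)$ by the definition of $\Omega$; applying $V_1$ to \eqref{2.13} and using $\ran(\ol{R_0(z)V_2^*}) \subseteq \dom(V_1)$ from \eqref{2.9} together with the closed graph theorem gives $V_1 R(z) \in \cB(\cH,\cK)$, and symmetrically (using the adjoint identities in \eqref{2.8}--\eqref{2.9} applied at $\ol z$) one gets $V_2 R(z)^* \in \cB(\cH,\cK)$. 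With \eqref{2.15} in hand, the two alternative resolvent formulas \eqref{2.16}--\eqref{2.17} are obtained by substituting $[I_\cK - K(z)]^{-1} V_1 R_0(z) = V_1 R(z)$ (which is just \eqref{2.13} solved for that product, valid because $V_1 R(z) = V_1 R_0(z) - K(z)[I_\cK - K(z)]^{-1}V_1 R_0(z) = [I_\cK - K(z)]^{-1} V_1 R_0(z)$) into \eqref{2.13}, and dually on the other side; this is routine operator algebra once the boundedness is secured.

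Finally, for \eqref{2.18}, take $f \in \dom(H_0) \cap \dom(V_2^* V_1)$ and set $g := (H_0 + V_2^* V_1 - zI_{\cH})f \in \cH$; the claim is $R(z)g = f$, i.e.\ $f = (H - zI_{\cH})^{-1}g$, which identifies $Hf = H_0 f + V_2^* V_1 f$. Writing $f = R_0(z)(H_0 - zI_{\cH})f = R_0(z)[g - V_2^* V_1 f]$ and feeding this into \eqref{2.13}, the $V_2^* V_1 f$ terms should telescope using $\ol{R_0(z)V_2^*} = R_0(z)V_2^*$ on $\dom(V_2^*)$ and $K(z) = -V_1\ol{R_0(z)V_2^*}$ from \eqref{2.8}; the parenthetical caveat that $\dom(H_0) \cap \dom(V_2^* V_1)$ may be trivial is simply the observation that this inclusion carries no content when the intersection domain is $\{0\}$. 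The main obstacle, and the step deserving the most care, is the first one: rigorously justifying that the family $\{R(z)\}_{z \in \Omega}$ defines a genuine operator resolvent — establishing injectivity of each $R(z)$, the $z$-independence of $\ran(R(z))$, and the resolvent equation — since everything afterward is comparatively mechanical. As noted in the text preceding the theorem, this is the content of \cite{Ka66} (and \cite{GLMZ05} for the non-self-adjoint case), so I would carry out the resolvent-identity verification explicitly and refer to those sources for the remaining closedness/density bookkeeping.
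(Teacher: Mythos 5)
Your outline follows the standard Kato--Konno--Kuroda route; the paper itself does not reproduce a proof of this theorem (it defers to \cite{GLMZ05} and \cite{Ka66}), so the comparison is against that argument. Most of your steps are sound: the first resolvent identity for $\{R(z)\}$ via \eqref{2.10}--\eqref{2.11}, the boundedness in \eqref{2.15} via the closed graph theorem (equivalently, $V_1R(z) = [I_{\cK}-K(z)]^{-1}V_1R_0(z)$, which simultaneously yields \eqref{2.16}--\eqref{2.17}; note your displayed intermediate step has a stray sign, but the conclusion is right), and \eqref{2.18} by direct substitution.

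The genuine gap is the density-of-range argument. You claim $\dom(H_0)\subseteq\ran(R(z))$ ``drops out of the computation below,'' but that computation only gives $\dom(H_0)\cap\dom(V_2^*V_1)\subseteq\ran(R(z))$, and you yourself observe that this intersection may be $\{0\}$. The stronger inclusion $\dom(H_0)\subseteq\ran(R(z))=\dom(H)$ is false in general --- for form-bounded perturbations the operator domains $\dom(H_0)$ and $\dom(H)$ typically differ, with neither containing the other. The correct route to density is to show $\ker(R(z)^*)=\{0\}$ and use $\ran(R(z))^{\perp}=\ker(R(z)^*)$. Taking adjoints in \eqref{2.13} and using \eqref{2.5}, \eqref{2.8} (so that $\big[\ol{R_0(z)V_2^*}\big]^* = V_2R_0(z)^*$, $[V_1R_0(z)]^* = \ol{R_0(z)^*V_1^*}$, and $K(z)^* = -V_2\ol{R_0(z)^*V_1^*}$), one finds that $R(z)^*$ has \emph{exactly} the same structural form as $R(z)$ after the substitutions $H_0\to H_0^*$, $z\to\ol z$, $V_1\leftrightarrow V_2$. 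Injectivity of $R(z)^*$ then follows by the same argument as for $R(z)$: from the analogue of \eqref{2.17},
\begin{equation}
R(z)^* = R_0(z)^* - \ol{R_0(z)^*V_1^*}\,V_2R(z)^*,
\end{equation}
one sees that $R(z)^*g=0$ forces $R_0(z)^*g=0$, hence $g=0$. With that repaired, your argument goes through.
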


\begin{remark} \lb{r2.4}
$(i)$ Assume that $H_0$ is self-adjoint in $\cH$. Then $H$ is also
self-adjoint if
\begin{equation}
(V_1 f, V_2g)_{\cK}=(V_2 f,V_1 g)_{\cK} \, \text{ for all } \,
f,g\in\dom(V_1) \cap \dom(V_2). \lb{2.24}
\end{equation}
$(ii)$ The formalism is symmetric with respect to $H_0$ and $H$ in the
following sense: The densely defined operator $- V_1 R(z) V_2^*$ has a
bounded extension to all of
$\cK$ for all $z\in\{\zeta\in\rho(H_0)\,|\, 1\in\rho(K(\zeta))\}$, in
particular,
\begin{equation}
I_{\cK}-\ol{V_1 R(z) V_2^*}=[I_{\cK}-K(z)]^{-1}, \quad
z\in\{\zeta\in\rho(H_0)\,|\, 1\in\rho(K(\zeta))\}.   \lb{2.25}
\end{equation}
Moreover,
\begin{align}
\begin{split}
R_0(z)&=R(z)+\ol{R(z) V_2^*}\big[I_{\cK}-\ol{V_1 R(z) V_2^*}\big]^{-1} V_1 R(z), \\
& \hspace*{2.35cm}
z\in\{\zeta\in\rho(H_0)\,|\, 1\in\rho(K(\zeta))\},    \lb{2.26}
\end{split}
\end{align}
and
\begin{equation}
H_0\supseteq (H - V_2^* V_1)|_{\dom(H) \cap \dom(V_2^* V_1)}.  \lb{2.27}
\end{equation}
$(iii)$ The basic hypotheses \eqref{2.2} which amount to
\begin{equation}
V_1 R_0(z)\in\cB(\cH,\cK), \quad \ol{R_0(z) V_2^*} 
= \big[V_2 (H_0^* - {\ol z} I_{\cH})^{-1}\big]^* \in
\cB(\cK,\cH), \quad z\in\rho(H_0).  \lb{2.27a}
\end{equation}
(cf.\ \eqref{2.5}) are more general than a quadratic form perturbation
approach which would result in conditions of the form
\begin{align}
\begin{split} 
V_1 R_0(z)^{1/2}\in\cB(\cH,\cK), \quad \ol{R_0(z)^{1/2} V_2^*}
= \big[V_2 (H_0^* - {\ol z} I_{\cH})^{-1/2}\big]^* \in \cB(\cK,\cH),& \\ 
z\in\rho(H_0),&  \lb{2.27b}
\end{split} 
\end{align}
or even an operator perturbation approach which would involve conditions
of the form
\begin{equation}
[V_2^* V_1] R_0(z)\in\cB(\cH), \quad z\in\rho(H_0).  \lb{2.27c}
\end{equation}
\end{remark}

The next result represents an abstract version
of (a variant of) the Birman--Schwinger principle due to Birman \cite{Bi61}
and Schwinger \cite{Sc61} (cf.\ also \cite{BS91}, \cite{GH87},
\cite{Kl82}, \cite{KS80}, \cite{Ne83}, \cite{Ra80}, \cite{Se74}, \cite[Ch.\ III]{Si71}, and
\cite{Si77a}). We will focus on geometric multiplicities and again 
follow \cite{GLMZ05} closely. 

We need to strengthen our hypotheses a bit and hence introduce the following
assumption:

\begin{hypothesis} \lb{h2.5}
In addition to Hypothesis \ref{h2.1} we suppose the condition: \\
$(iv)$ $[I_{\cK} - K(z)] \in \Phi(\cK)$ for all $z\in\rho(H_0)$.
\end{hypothesis}

\begin{remark} \lb{r2.6}
In concrete applications, say, to Schr\"odinger-type operators, condition $(iv)$ in 
Hypothesis \ref{h2.5} is frequently replaced by the stronger assumption: \\
$(iv')$ $K(z)\in\cB_\infty(\cK)$ for all $z\in\rho(H_0)$.   \\
In this case $[I_{\cK} - K(z)]$ is a Fredholm operator with index zero for all $z\in\rho(H_0)$.

An elementary example illustrating that condition $(iv')$ is stronger than $(iv)$ is easily 
constructed as follows: Choose 
\begin{align}
\begin{split} 
& \cK = \cH, \quad V_j = I_{\cH}, \; j=1,2, \quad H_0 = H_0^*, \quad H = H_0 + I_{\cH},   \\
& \sigma_p(H_0) = \emptyset, \quad \sigma_{ess} (H_0) \neq \emptyset.  
\end{split}
\end{align} 
Next, choose $z \in \bbC\backslash \bbR$. Then,  $K(z) = (H_0 - z I_{\cH})^{-1}$, and 
$I_{\cH} - (H_0 - z I_{\cH})^{-1} = (H_0 - (z + 1) I_{\cH})(H_0 - z I_{\cH})^{-1}$, implying that 
$\ran(I_{\cH} - K(z)) = \cH$ and $\ker(I_{\cH} - K(z)) = \ker(I_{\cH} - K(z)^*) = \{0\}$. Hence, 
$[I_{\cK} - K(z)] \in \Phi(\cK)$ with $\ind(I_{\cK} - K(z)) =0$. However, since 
$\sigma_{ess}(K(z)) \supsetneqq \{0\}$, one concludes that $K(z) \notin \cB_{\infty}(\cH)$. 
\end{remark}

Since by \eqref{2.25},
\begin{equation}
-\ol{V_1 R(z) V_2^*} = [I_{\cK}-K(z)]^{-1}K(z) =-I_{\cK}+[I_{\cK}-K(z)]^{-1},       \lb{2.28}
\end{equation}
Hypothesis \ref{h2.5} implies that $I_{\cK} - \ol{V_1 R(\cdot) V_2^*}$ extends to a 
Fredholm operator in $\Phi(\cK)$ as long as the right-hand side of \eqref{2.28} exists.  
Similarly, under condition $(iv')$ in Remark \ref{r2.6}, $\ol{V_1 R(\cdot) V_2^*}$ extends to a compact 
operator in $\cK$ as long as the right-hand side of \eqref{2.28} exists.

Regarding eigenvalues, we recall that if $T$ is a densely defined, closed linear operator in $\cH$, 
then the {\it geometric multiplicity}, $m_g(z_0;T)$, of an eigenvalue $z_0 \in \sigma_p(T)$ of $T$ 
is given by 
\begin{equation}
m_g(z_0;T) = \dim(\ker(T - z_0 I_{\cH})).
\end{equation}  

The following general result is due to Konno and Kuroda \cite{KK66} in the case where $H_0$ 
is self-adjoint and condition $(iv')$ in Remark \ref{r2.6} is assumed. 

\begin{theorem}[\cite{KK66}] \lb{t2.7}
Assume Hypothesis \ref{h2.5} and let $z_0\in\rho(H_0)$. Then,
\begin{equation}
Hf=z_0 f, \quad 0\neq f\in\dom(H) \, \text{ implies } \,
K(z_0)g=g \lb{2.29}
\end{equation}
where, for fixed $z_1\in\{\zeta\in\rho(H_0)\,|\,
1\in\rho(K(\zeta))\}$, $z_1\neq z_0$,
\begin{align}
0 \neq g &= [I_{\cK}-K(z_1)]^{-1} V_1 R_0(z_1)f \lb{2.30} \\
&= (z_0-z_1)^{-1} V_1 f. \lb{2.30a}
\end{align}
Conversely,
\begin{equation}
K(z_0)g=g, \quad 0\neq g\in\cK \, \text{ implies } \,
Hf=z_0 f, \lb{2.31}
\end{equation}
where
\begin{equation}
0\neq f=-\ol{R_0(z_0) V_2^*}g\in\dom(H). \lb{2.32}
\end{equation}
Moreover,
\begin{equation}
m_g(z_0;H) = \dim(\ker(H-z_0I_{\cH}))
=\dim(\ker(I_{\cK}-K(z_0))) = m_g(1;K(z_0)) <\infty.
\lb{2.33}
\end{equation}
In particular, let $z\in\rho(H_0)$, then
\begin{equation}
\text{$z\in\rho(H)$ if and only if\;
$1\in\rho(K(z))$.} \lb{2.33a}
\end{equation}
\end{theorem}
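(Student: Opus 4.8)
The plan is to establish Theorem \ref{t2.7} by combining the resolvent identities from Section \ref{s2} with direct kernel computations, working from the explicit formula \eqref{2.13} for $R(z)$. First I would prove the implication \eqref{2.29}--\eqref{2.30a}: given $Hf = z_0 f$ with $f \neq 0$, I fix an auxiliary point $z_1 \in \{\zeta \in \rho(H_0)\,|\,1 \in \rho(K(\zeta))\}$ with $z_1 \neq z_0$, apply $R(z_1) = (H - z_1 I_{\cH})^{-1}$ to the identity $(H - z_1 I_{\cH})f = (z_0 - z_1)f$ to get $f = (z_0 - z_1) R(z_1) f$, and then substitute the Kato formula \eqref{2.13} for $R(z_1)$. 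Using \eqref{2.16}--\eqref{2.17} and $V_1 R(z_1) \in \cB(\cH,\cK)$ from \eqref{2.15}, a short manipulation should show that $g := [I_{\cK} - K(z_1)]^{-1} V_1 R_0(z_1) f = (z_0 - z_1)^{-1} V_1 f$ satisfies $K(z_0)g = g$; the equality of the two expressions for $g$ in \eqref{2.30}--\eqref{2.30a} follows from \eqref{2.17} applied to $f$, while $g \neq 0$ follows because $g = 0$ would force $f = R_0(z_1)(\,\cdot\,) = 0$ via \eqref{2.13}. The passage between the values $z_1$ and $z_0$ here is exactly the point where the resolvent equation \eqref{2.10}--\eqref{2.11} for $K(\cdot)$ is invoked.

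Next I would prove the converse \eqref{2.31}--\eqref{2.32}: starting from $K(z_0)g = g$ with $g \neq 0$, I set $f := -\ol{R_0(z_0)V_2^*}\,g$. By \eqref{2.9}, $f \in \dom(V_1)$, and one computes $V_1 f = -V_1\ol{R_0(z_0)V_2^*}\,g = K(z_0)g = g$ using \eqref{2.8}. That $f \neq 0$ follows since $f = 0$ would give $g = V_1 f = 0$. To see $f \in \dom(H)$ and $Hf = z_0 f$, I would pick $z_1$ as above and verify, using the resolvent formula \eqref{2.13} together with \eqref{2.6}--\eqref{2.7} and \eqref{2.10}--\eqref{2.11}, that $R(z_1)$ applied to $(z_0 - z_1)f$ returns $f$; equivalently, that $f \in \ran(R(z_1)) = \dom(H)$ with $(H - z_1 I_{\cH})f = (z_0 - z_1)f$. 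This is the routine but slightly lengthy algebraic core of the argument: one expands $(z_0 - z_1)R(z_1)f$ via \eqref{2.13}, rewrites $\ol{R_0(z_1)V_2^*} = \ol{R_0(z_0)V_2^*} + (z_1 - z_0)R_0(z_1)\ol{R_0(z_0)V_2^*}$ via \eqref{2.6}, and uses $K(z_0)g = g$ plus the resolvent equation for $K$ to collapse the bracketed term, arriving back at $f$.

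For the multiplicity statement \eqref{2.33}, the maps constructed above are mutually inverse linear bijections: $f \mapsto (z_0 - z_1)^{-1} V_1 f$ sends $\ker(H - z_0 I_{\cH})$ into $\ker(I_{\cK} - K(z_0))$, and $g \mapsto -\ol{R_0(z_0)V_2^*}\,g$ sends $\ker(I_{\cK} - K(z_0))$ into $\ker(H - z_0 I_{\cH})$; one checks the two compositions are the respective identities using $V_1 f = g$ and $f = -\ol{R_0(z_0)V_2^*}\,g$, i.e. using \eqref{2.17}. Hence $m_g(z_0; H) = m_g(1; K(z_0))$, and this common dimension is finite because $I_{\cK} - K(z_0) \in \Phi(\cK)$ by Hypothesis \ref{h2.5}\,$(iv)$, so $\dim(\ker(I_{\cK} - K(z_0))) < \infty$. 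Finally, \eqref{2.33a}: if $z \in \rho(H_0)$ and $1 \in \rho(K(z))$ then the Kato formula \eqref{2.13} exhibits a bounded everywhere-defined inverse of $H - z I_{\cH}$, so $z \in \rho(H)$; conversely, if $1 \notin \rho(K(z))$, then since $I_{\cK} - K(z)$ is Fredholm and not boundedly invertible, $1 \in \sigma_p(K(z))$ (a non-invertible Fredholm operator with trivial kernel would have to be surjective by the index and closed-range properties — so the kernel is nontrivial), whence \eqref{2.31}--\eqref{2.32} produces an eigenvector of $H$ at $z$, giving $z \in \sigma(H)$.

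The main obstacle I anticipate is the bookkeeping in the converse direction: showing $f = -\ol{R_0(z_0)V_2^*}\,g \in \dom(H)$ with $Hf = z_0 f$ requires carefully threading the Kato resolvent identity \eqref{2.13} at the point $z_1$ against the eigenrelation $K(z_0)g = g$ at the point $z_0$, and the cancellations only go through after repeated use of \eqref{2.6}--\eqref{2.7} and \eqref{2.10}--\eqref{2.11}; keeping track of which resolvent sits on which side of $\ol{R_0(\cdot)V_2^*}$ is where errors are most likely. Everything else is a direct transcription of the self-adjoint argument of Konno and Kuroda \cite{KK66}, with the only genuinely new ingredient being the replacement of the compactness hypothesis $(iv')$ by the Fredholm hypothesis $(iv)$, which enters solely to guarantee finiteness of the multiplicity and to make the ``not invertible $\Rightarrow$ eigenvalue'' step in \eqref{2.33a} valid.
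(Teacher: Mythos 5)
The paper states Theorem~\ref{t2.7} without proof, referring to \cite{KK66} and \cite{GLMZ05}, so there is no in-paper argument to compare against; your plan is the standard Konno--Kuroda computation, and the core algebra (verifying $K(z_0)g=g$ in the forward direction and $(z_0-z_1)R(z_1)f=f$ in the converse, via \eqref{2.13} together with \eqref{2.6}--\eqref{2.11} and the first resolvent identity for $R_0$) does go through.

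There are two issues to flag. The first is a normalization slip: the maps $f\mapsto(z_0-z_1)^{-1}V_1 f$ and $g\mapsto-\ol{R_0(z_0)V_2^*}\,g$ are \emph{not} mutually inverse. From $f=-\ol{R_0(z_0)V_2^*}\,g$ one gets $V_1f=K(z_0)g=g$, so the composition returns $(z_0-z_1)^{-1}g$ rather than $g$; likewise the other composition is $(z_0-z_1)^{-1}f$. The relation ``$V_1 f=g$'' you invoke in the multiplicity step contradicts \eqref{2.30a}, which gives $V_1 f=(z_0-z_1)g$. Both compositions are nonzero scalar multiples of the identity, so each map is injective and \eqref{2.33} still follows (finiteness coming from Hypothesis~\ref{h2.5}\,$(iv)$), but the claim that the compositions equal the identity is wrong as written.

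The second issue is a genuine gap in the proof of \eqref{2.33a}. Your parenthetical ``a non-invertible Fredholm operator with trivial kernel would have to be surjective by the index and closed-range properties'' tacitly assumes $\ind(I_{\cK}-K(z))=0$, but Hypothesis~\ref{h2.5}\,$(iv)$ only asserts $[I_{\cK}-K(z)]\in\Phi(\cK)$. The Fredholm index is locally constant and vanishes on the connected component of $\rho(H_0)$ containing the point $z_0$ from Hypothesis~\ref{h2.1}\,$(iii)$, but the hypotheses do not force $\rho(H_0)$ to be connected. If $\ind(I_{\cK}-K(z))<0$ on another component, then $I_{\cK}-K(z)$ may be injective with closed range strictly contained in $\cK$, in which case $1\notin\sigma_p(K(z))$ even though $1\notin\rho(K(z))$, and your argument produces no eigenvector of $H$. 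The missing case is treated by passing to adjoints: there $\ker\big((I_{\cK}-K(z))^*\big)\neq\{0\}$, and by \eqref{2.8} this is precisely the Birman--Schwinger eigenvalue relation at $\ol z$ for the adjoint triple $(H_0^*,V_2,V_1)$; applying the already-established converse direction to that triple yields $\ol z\in\sigma_p(H^*)$, so $\ran(H-zI_{\cH})$ is not dense and $z\in\sigma(H)$. You should either supply that adjoint argument or state explicitly that you restrict to the index-zero case (e.g.\ under condition $(iv')$ of Remark~\ref{r2.6}, or assuming $\rho(H_0)$ connected).
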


It is possible to avoid the Fredholm operator (resp., compactness) assumption in condition $(iv)$ in 
Hypothesis \ref{h2.5} (resp., condition $(iv')$ in Remark \ref{r2.6}) in Theorem \ref{t2.7} 
provided that \eqref{2.33} is replaced by the statement:
\begin{equation}\label{2.33NEW}
\text{The subspaces $\ker(H-z_0I_{\cH})$ and
$\ker(I_{\cK}-K(z_0))$ are isomorphic}
\end{equation}
(cf.\ \cite{GLMZ05}). Of course, \eqref{2.33} follows from \eqref{2.33NEW} provided
$\ker(I_{\cK}-K(z_0))$ is finite-dimensional, which in turn follows from Hypothesis \ref{h2.5}.

\section{On Factorizations of Analytic Operator-Valued Functions}
\lb{s3}

In this section, we consider factorizations of analytic operator-valued functions.  We recall and extend a factorization result due to Howland \cite{Ho71}.

Assuming $\Omega \subseteq \bbC$  to be open and $M(\cdot)$ to be a $\cB(\cH)$-valued 
meromorphic function on $\Omega$ that has the norm convergent Laurent expansion around 
$z_0 \in \Omega$ of the type 
\begin{align}
\begin{split} 
M(z) = \sum_{k= - N_0}^\infty (z - z_0)^{k} M_k(z_0), \quad M_k(z_0) \in \cB(\cH), \, k \in \bbZ, 
\; k \geq - N_0,&      \\  
0 < |z - z_0| < \varepsilon_0,&      \lb{3.1}
\end{split} 
\end{align}
for some $N_0 = N_0(z_0) \in \bbN$ and some $0 < \varepsilon_0 = \varepsilon_0(z_0)$ sufficiently small, 
we denote the principal part, ${\rm pp}_{z_0} \, \{M(\cdot)\}$, of $M(\cdot)$ at $z_0$ by
\begin{align}
\begin{split} 
{\rm pp}_{z_0} \, \{M(z)\} = \sum_{k= - N_0}^{-1} (z - z_0)^{k} M_k(z_0), \quad 
M_{-k}(z_0) \in \cB(\cH), \; 1 \leq k \leq N_0,&     \\
0 < |z - z_0| < \varepsilon_0.&    \lb{3.2}
\end{split} 
\end{align}

Given the notation \eqref{3.2}, we start with the following definition.

\begin{definition} \lb{d3.1}
Let $\Omega\subseteq\bbC$ be open and connected. Suppose that $M(\cdot)$ is a 
$\cB(\cH)$-valued analytic function on $\Omega$ except for isolated singularities. Then  
$M(\cdot)$ is called {\it finitely meromorphic at $z_0 \in\Omega$} if $M(\cdot)$ is 
analytic on the punctured disk $D(z_0;\varepsilon_0)\backslash\{z_0\} \subset \Omega$ 
centered at $z_0$ with sufficiently small $\varepsilon_0>0$, and the principal part of $M(\cdot)$ 
at $z_0$ is of finite rank, that is, if the principal part of $M(\cdot)$ is of the type \eqref{3.2}, and
one has 
\begin{equation}
M_{-k}(z_0) \in \cF(\cH), \quad 1 \leq k \leq N_0. 
\end{equation}
In addition, $M(\cdot)$ is called {\it finitely meromorphic on $\Omega$} if it is meromorphic 
on $\Omega$ and finitely meromorphic at each of its poles.
\end{definition}

In using the term {\it finitely meromorphic} we closely follow the convention in \cite{GS71} (see 
also \cite[Sect.\ XI.9]{GGK90} and \cite[Sect.\ 4.1]{GL09}).  We also note that the notions 
{\it completely meromorphic} (cf.\ \cite{Ho70}, adopted in \cite{GLMZ05}) and {\it essentially meromorphic} (cf.\ \cite{RV69}) have been used instead in the literature. 

Throughout this section we make the following assumptions:

\begin{hypothesis} \lb{h3.2} 
Let $\Omega \subseteq \bbC$ be open and connected, and suppose that 
$A:\Omega \to \cB(\cH)$ is analytic and that 
\begin{equation}
A(z) \in \Phi(\cH) \, \text{ for all } \, z\in\Omega.    \lb{3.4} 
\end{equation}  
\end{hypothesis}

One then recalls the analytic Fredholm theorem in the following form:

\begin{theorem} [\cite{GL09}, Sect.~4.1, \cite{GS71}, \cite{Ho70}, {\cite[Theorem\ VI.14]{RS80}}, \cite{St68}] \lb{t3.3}  ${}$ \\
Assume that $A:\Omega \to \cB(\cH)$ satisfies Hypothesis \ref{h3.2}. Then either \\
$(i)$ $A(z)$ is not boundedly invertible for any $z \in \Omega$, \\[1mm]
or else, \\[1mm]
$(ii)$ $A(\cdot)^{-1}$ is finitely meromorphic on $\Omega$. More precisely, there exists a discrete 
subset $\cD_1 \subset \Omega$ $($possibly, $\cD_1 = \emptyset$$)$ such that 
$A(z)^{-1} \in \cB(\cH)$ for all $z \in \Omega\backslash\cD_1$, $A(\cdot)^{-1}$ is analytic on 
$\Omega\backslash\cD_1$, and meromorphic on $\Omega$. In addition, 
\begin{equation}
A(z)^{-1} \in \Phi(\cH) \, \text{ for all } \, z \in \Omega\backslash\cD_1, 
\end{equation}
and if $z_1 \in \cD_1$ then 
\begin{equation}
A(z)^{-1} = \sum_{k= - N_0(z_1)}^{\infty} (z - z_1)^{k} C_k(z_1), \quad 
0 < |z - z_1| < \varepsilon_0(z_1),    \lb{3.5}
\end{equation}
with  
\begin{align}\lb{3.7B} 
\begin{split} 
& C_{-k}(z_1) \in \cF(\cH), \; 1 \leq k \leq N_0(z_1),  \quad  C_0(z_1) \in \Phi(\cH), \\
& C_k(z_1) \in \cB(\cH), \; k \in \bbN. 
\end{split} 
\end{align}
In addition, if $[I_{\cH} - A(z)] \in \cB_{\infty}(\cH)$ for all $z \in \Omega$, then 
\begin{equation}
\big[I_{\cH} - A(z)^{-1}\big] \in \cB_{\infty}(\cH), \; z \in \Omega\backslash\cD_1, 
\quad [I_{\cH} - C_0(z_1)] \in \cB_{\infty}(\cH), \; z_1 \in \cD_1.  
\end{equation} 
\end{theorem}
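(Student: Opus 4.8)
The plan is to prove the analytic Fredholm theorem as stated (Theorem \ref{t3.3}) by a standard localization-plus-finite-rank-reduction argument, and then to track the compactness refinement through that construction. First I would observe that the dichotomy itself is a connectedness argument: the set $\Omega_0 = \{z \in \Omega \mid A(z) \text{ is boundedly invertible}\}$ is open (since $\Phi(\cH)$ and the invertible operators are open in $\cB(\cH)$, and $z \mapsto A(z)$ is continuous), so it suffices to show that if $\Omega_0 \neq \emptyset$ then $\Omega \setminus \Omega_0$ is discrete; then $(i)$ is the case $\Omega_0 = \emptyset$ and $(ii)$ is the case $\Omega_0 \neq \emptyset$ with $\cD_1 := \Omega \setminus \Omega_0$. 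Discreteness is a local statement, so fix $z_1 \in \Omega$ with $A(z_1)$ not invertible and work in a small disk $D(z_1; \varepsilon_0)$.

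The local reduction is the heart of the matter. Since $A(z_1) \in \Phi(\cH)$, write $\cH = \ker(A(z_1)) \dotplus \cH_1$ and $\cH = \ran(A(z_1)) \dotplus \cH_2$ with $\dim(\ker(A(z_1))) = n_1 < \infty$ and $\dim(\cH_2) = n_2 < \infty$; let $F$ be a fixed finite-rank operator that is an isomorphism of $\ker(A(z_1))$ onto $\cH_2$ and vanishes on $\cH_1$. Then $B(z) := A(z) + F$ has $B(z_1)$ boundedly invertible, hence $B(z)^{-1} \in \cB(\cH)$ and analytic on a possibly smaller disk, which I will still call $D(z_1; \varepsilon_0)$. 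On that disk $A(z) = B(z) - F = B(z)[I_{\cH} - B(z)^{-1} F]$, and $G(z) := B(z)^{-1} F$ is an analytic \emph{finite-rank}-valued function (rank at most $n_1$), so $A(z)$ is invertible precisely when $[I_{\cH} - G(z)]$ is, and $A(z)^{-1} = [I_{\cH} - G(z)]^{-1} B(z)^{-1}$ wherever it exists. Writing $G(z)$ in terms of a fixed finite basis reduces the invertibility of $I_{\cH} - G(z)$ to the non-vanishing of a scalar analytic determinant $d(z) = \det_{\cH}(I_{\cH} - G(z))$ on $D(z_1;\varepsilon_0)$; since $z_1 \in \Omega_0$ forces $d \not\equiv 0$ on the connected $\Omega$ (by the identity theorem applied after covering $\Omega$ by such disks), $d$ has only isolated zeros, giving discreteness of $\cD_1$ and, via Cramer's rule, the Laurent expansion \eqref{3.5} with finite-rank principal coefficients $C_{-k}(z_1) \in \cF(\cH)$, $1 \le k \le N_0(z_1)$. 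The statements $C_0(z_1) \in \Phi(\cH) \cap \cB(\cH)$ and $A(z)^{-1} \in \Phi(\cH)$ on $\Omega \setminus \cD_1$ then follow because $A(z)^{-1}$ is a genuine bounded inverse there (so Fredholm of index $0$), and $C_0(z_1)$ differs from $[I_{\cH} - G(z)]^{-1} B(z)^{-1}$ evaluated near $z_1$ by finite-rank terms coming from the principal part, while $[I_{\cH}-G(z)]^{-1}$ itself is $I_{\cH}$ plus finite rank (Sherman--Morrison--Woodbury type identity) and $B(z_1)^{-1} \in \Phi(\cH)$.

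For the final compactness assertion, suppose in addition $[I_{\cH} - A(z)] \in \cB_\infty(\cH)$ for all $z \in \Omega$. Then $F \in \cF(\cH) \subset \cB_\infty(\cH)$ gives $[I_{\cH} - B(z)] = [I_{\cH} - A(z)] - F \in \cB_\infty(\cH)$, and since the invertible $B(z)^{-1}$ satisfies $I_{\cH} - B(z)^{-1} = B(z)^{-1}[B(z) - I_{\cH}]$ with $\cB_\infty(\cH)$ a two-sided ideal, we get $[I_{\cH} - B(z)^{-1}] \in \cB_\infty(\cH)$ on $D(z_1;\varepsilon_0) \setminus \{z_1\}$ and at regular points. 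Also $[I_{\cH} - G(z)]^{-1} - I_{\cH}$ is finite rank, hence compact. Therefore on $\Omega \setminus \cD_1$,
\begin{equation}
I_{\cH} - A(z)^{-1} = I_{\cH} - [I_{\cH} - G(z)]^{-1} B(z)^{-1} = \big(I_{\cH} - [I_{\cH}-G(z)]^{-1}\big) + [I_{\cH}-G(z)]^{-1}\big(I_{\cH} - B(z)^{-1}\big) \in \cB_\infty(\cH),
\end{equation}
and taking the constant (degree-zero) Laurent coefficient at $z_1 \in \cD_1$ — which is a limit in $\cB(\cH)$ of such compact operators after subtracting the finite-rank principal part, or equivalently a contour-integral average of them — yields $[I_{\cH} - C_0(z_1)] \in \cB_\infty(\cH)$. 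I expect the main obstacle to be purely bookkeeping: pinning down exactly which terms in the Laurent expansion of $A(z)^{-1}$ near $z_1$ are finite-rank versus merely bounded, and verifying that $C_0(z_1)$ inherits the Fredholm (and, under the extra hypothesis, the $I + \text{compact}$) structure; everything else is the classical localization argument and the identity theorem for the scalar determinant $d(z)$.
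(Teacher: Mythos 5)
The paper does not actually prove Theorem~\ref{t3.3}: it is stated as a recalled result with citations to the literature, and the text moves on immediately, so there is no in-paper proof to compare yours against. What you have written is the classical finite-rank reduction argument, which is indeed the proof one finds in the cited sources (Steinberg, Gohberg--Sigal, Gohberg--Leiterer; Reed--Simon treat the $I_{\cH}+\text{compact}$ special case), and the overall structure is sound, including the way you thread the compactness refinement through the factorization $A(z)^{-1} = [I_{\cH} - G(z)]^{-1}B(z)^{-1}$ and use a contour-integral representation of $C_0(z_1)$ together with the norm-closedness of $\cB_\infty(\cH)$.

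Two points need to be tightened before the argument is airtight. First, your construction of $F$ as an isomorphism of $\ker(A(z_1))$ onto the complement $\cH_2$ of $\ran(A(z_1))$ tacitly requires $\dim(\ker(A(z_1))) = \dim(\cH_2)$, i.e.\ $\ind(A(z_1)) = 0$; this is not part of Hypothesis~\ref{h3.2}, and if the index were nonzero no finite-rank perturbation could make $B(z_1)=A(z_1)+F$ invertible. The fix is to observe that the Fredholm index is locally constant and $\Omega$ is connected, so $\Omega_0\neq\emptyset$ already forces $\ind(A(z))=0$ for every $z\in\Omega$; if the common index is nonzero one is automatically in alternative $(i)$. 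Second, ``since $z_1\in\Omega_0$ forces $d\not\equiv 0$'' is a slip ($z_1\notin\Omega_0$ by construction), and the gluing from the local determinant to discreteness of $\cD_1$ on all of $\Omega$ is more than the scalar identity theorem: one should argue that the set of $z\in\Omega$ near which $A(\cdot)$ is invertible on a punctured neighborhood is both open and closed (closedness coming from the local alternative $d\equiv 0$ versus $d$ with isolated zeros), hence equals $\Omega$ once $\Omega_0\neq\emptyset$. With these two clarifications, the Cramer's-rule production of finite-rank principal Laurent coefficients, $C_0(z_1)=B(z_1)^{-1}+\text{finite rank}\in\Phi(\cH)$, and the compactness of $I_{\cH}-A(z)^{-1}$ and $I_{\cH}-C_0(z_1)$ under the extra hypothesis all go through as you describe.
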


For an interesting extension of the analytic Fredholm theorem in connection with Hahn 
holomorphic functions we refer to \cite{MS14}.

For a linear operator $S$ in $\cH$ with closed range one defines the 
{\it defect of $S$}, denoted by $\de(S)$, by the codimension of $\ran(S)$ in 
$\cH$, that is, 
\begin{equation}
\de(S) = \dim\big(\ran(S)^{\bot}\big). 
\end{equation}
 
In addition, we recall the notion of linear independence with respect to a linear subspace of $\cH$: Let $\cD \subseteq \cH$ be a linear subspace of $\cH$. 
Then vectors $f_k \in \cH$, $1 \leq k \leq N$, $N \in \bbN$, 
are called {\it linearly independent} $({\rm mod}\, \cD)$, if 
\begin{align}
\begin{split} 
&\sum_{k = 1}^N c_k f_k \in \cD \, \text{ for some coefficients } 
\, c_k \in \bbC, \, 1 \leq k \leq N, \\
& \quad \text{implies } \, c_k = 0, \, 1 \leq k \leq N.     \lb{3.10}
\end{split} 
\end{align}
In addition, with $\cD$ and $\cE$ linear subspaces of $\cH$ with 
$\cD \subseteq \cE$, the quotient subspace 
$\cE / \cD$ consists of equivalence classes $[f]$ such that $g \in [f]$ if and 
only if $(f - g) \in \cD$, in particular, $f = g \; ({\rm mod}\, \cD)$ is equivalent 
to $(f -g) \in \cD$. Moreover, the dimension of 
$\cE \; ({\rm mod} \, \cD)$, denoted by $\dim_{\cD}(\cE)$, equals 
$n \in \bbN$, if there are $n$, but not more than $n$, linearly 
independent vectors in $\cE$, such that no linear combination (except, the 
trivial one) belongs to $\cD$. If no such finite $n\in\bbN$ exists, one defines 
$\dim_{\cD}(\cE) = \infty$. 
 
The following three results due to Howland \cite{Ho71} are fundamental for the remainder 
of this section and for convenience of the reader we include their proof under slightly more 
general hypotheses, replacing Howland's assumption that 
$[A(\cdot) - I_{\cH}] \in \cB_{\infty}(\cH)$ by the assumption that $A(\cdot)$ is Fredholm. 
n addition, we occasionally offer a few additional details in the proofs of these results.

\begin{theorem} [\cite{Ho71}] \lb{t3.4}
Assume that $A:\Omega \to \cB(\cH)$ satisfies Hypothesis \ref{h3.2}, suppose that $A(z)$ is 
boundedly invertible for some $z \in \Omega$ $($i.e., case $(ii)$ in Theorem \ref{t3.3} applies$)$, 
and let $z_0 \in \Omega$ be a pole of $A(\cdot)^{-1}$. Denote by $Q_1$ any projection onto 
$\ran(A(z_0))$ and let $P_1 = I_{\cH} - Q_1$. Then,
\begin{equation}
A(z) = [Q_1 - (z-z_0) P_1] A_1(z), \quad z \in \Omega,     \lb{3.10a} 
\end{equation}
where 
\begin{align} 
& \text{$A_1(\cdot)$ is analytic on $\Omega$,} \lb{3.12o}\\
& A_1(z) \in \Phi(\cH), \quad z\in\Omega,    \lb{3.11} \\ 
& \ind(A(z)) = \ind(A_1(z)) =0, \quad z \in \Omega, \; |z - z_0| \, \text{ sufficiently small,}   
\lb{3.13} \\ 
& \de(A_1(z_0)) \leq \de(A(z_0)).    \lb{3.12} 
\end{align}
If $z_0$ is a pole of $A(\cdot)^{-1}$ of order $n_0\in\bbN$, then $z_0$ is a pole 
of $A_1(\cdot)^{-1}$ of order $n_0 - 1$. Finally, 
\begin{align}
& [I_{\cH} - A(\cdot)] \in \cF(\cH) \, \text{ $($resp., $\cB_p(\cH)$ for some $1 \leq p \leq \infty$$)$}  \lb{3.14} \\ 
& \quad \text{if and only if } \, 
[I_{\cH} - A_1(\cdot)] \in \cF(\cH) \, \text{ $($resp., $\cB_p(\cH)$ for some $1 \leq p \leq \infty$$)$.}   \no 
\end{align}
\end{theorem}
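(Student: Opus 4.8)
The plan is to construct $A_1(\cdot)$ explicitly from the factorization and then verify each listed property in turn. First I would define, for $z$ near $z_0$, the candidate
\begin{equation*}
A_1(z) = [Q_1 - (z-z_0)P_1]^{-1} A(z),
\end{equation*}
noting that $Q_1 - (z-z_0)P_1$ is boundedly invertible for all $z \neq z_0$ (its inverse is $Q_1 - (z-z_0)^{-1}P_1$, since $P_1,Q_1$ are complementary projections), so the only issue is analyticity and boundedness at $z_0$ itself. The key computation is to expand $A(z) = A(z_0) + (z-z_0)A'(z_0) + \cdots$ and observe that $Q_1 A(z_0) = A(z_0)$ because $\ran(A(z_0)) \subseteq \ran(Q_1)$, so that $[Q_1 - (z-z_0)^{-1}P_1]A(z) = Q_1 A(z_0) + (z-z_0)[\,\cdots\,] + (z-z_0)^{-1}P_1[A(z_0) - A(z)]$; the apparent $(z-z_0)^{-1}$ singularity is killed because $P_1[A(z_0)-A(z)] = -(z-z_0)P_1 A'(z_0) - \cdots$ has a zero at $z_0$. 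Hence $A_1(\cdot)$ extends analytically across $z_0$ and the factorization \eqref{3.10a} holds locally; since $Q_1 - (z-z_0)P_1$ is globally invertible on $\Omega\setminus\{z_0\}$ one extends $A_1$ to all of $\Omega$ by the same formula, giving \eqref{3.12o}.

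Next I would establish the Fredholm and index claims. Since $[Q_1 - (z-z_0)P_1] \in \Phi(\cH)$ for every $z$ (it is invertible for $z\neq z_0$, and at $z_0$ it equals $Q_1$ which is Fredholm precisely because $P_1 = I_\cH - Q_1$ has finite rank — here one uses that $\ran(A(z_0))$ has finite codimension, as $A(z_0)\in\Phi(\cH)$, so $Q_1$ may and must be chosen with $\dim(\ran(P_1)) = \de(A(z_0)) < \infty$), the factorization \eqref{3.10a} together with the stability of $\Phi(\cH)$ under composition gives $A_1(z) = [Q_1 - (z-z_0)P_1]^{-1}A(z) \in \Phi(\cH)$ for $z\neq z_0$ and, at $z_0$, Fredholmness of $A_1(z_0)$ follows from $A(z_0) = Q_1 A_1(z_0)$ by the standard fact that if a product of bounded operators is Fredholm and one factor is Fredholm then so is the other (applied to $Q_1$ and $A_1(z_0)$, using that $Q_1$ is Fredholm as a map onto its range complemented appropriately). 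This yields \eqref{3.11}. For \eqref{3.13}: the index is locally constant on $\Omega$ for the analytic Fredholm family $A(\cdot)$, and near $z_0$ case $(ii)$ of Theorem \ref{t3.3} gives points where $A(z)$ is boundedly invertible, forcing $\ind(A(z)) = 0$ there and hence in a neighborhood; multiplicativity of the index, $\ind(A(z)) = \ind(Q_1 - (z-z_0)P_1) + \ind(A_1(z))$, together with $\ind(Q_1 - (z-z_0)P_1) = 0$ (it is $0$ for $z\neq z_0$ by invertibility, hence $0$ at $z_0$ by index stability, or directly $\ind(Q_1) = \dim\ker Q_1 - \dim\ker Q_1^* = \dim\ran P_1 - \dim\ran P_1^* = 0$ for a suitable choice), gives $\ind(A_1(z)) = 0$.

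For the defect inequality \eqref{3.12}: evaluating \eqref{3.10a} at $z_0$ gives $A(z_0) = Q_1 A_1(z_0)$, so $\ran(A(z_0)) = Q_1 \ran(A_1(z_0)) \subseteq \ran(Q_1)$; I would argue that $\ker(Q_1) \cap \ran(A_1(z_0))$ injects into $\ran(A_1(z_0))^\perp$-type complements, or more cleanly: $\de(A(z_0)) = \de(Q_1 A_1(z_0)) \geq \de(A_1(z_0))$ fails in general, so instead note $\ran(Q_1 A_1(z_0)) \supseteq$ hmm — the correct route is that $Q_1|_{\ran(A_1(z_0))}$ has range $\ran(A(z_0))$, so $\dim(\ran(Q_1)/\ran(A(z_0))) \leq \dim(\ker(Q_1|_{\ran A_1(z_0)})) \leq \dim\ker Q_1 = \de(A(z_0))$ while $\de(A_1(z_0)) = \dim(\cH/\ran A_1(z_0)) \leq \dim(\cH/\ran Q_1) + \dim(\ran Q_1 / Q_1\ran A_1(z_0))$; pushing this bookkeeping through — which is the main obstacle and the part requiring genuine care — yields \eqref{3.12}. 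The pole-order statement follows from $A_1(z)^{-1} = A(z)^{-1}[Q_1 - (z-z_0)P_1] = A(z)^{-1}Q_1 - (z-z_0)A(z)^{-1}P_1$: if $A(\cdot)^{-1}$ has a pole of order $n_0$ at $z_0$ with principal part through $(z-z_0)^{-n_0}$, multiplying by the linear-in-$(z-z_0)$ factor lowers the worst-case order to $n_0 - 1$, and one checks the leading coefficient does not vanish using that $z_0$ is genuinely a pole (this step also uses \eqref{1.6}-type rank considerations implicitly). Finally, the Schatten/finite-rank equivalence \eqref{3.14} follows from the identity $I_\cH - A_1(z) = [Q_1-(z-z_0)P_1]^{-1}\big([Q_1-(z-z_0)P_1] - A(z)\big) = [Q_1-(z-z_0)P_1]^{-1}\big((z-z_0)P_1 - (I_\cH - A(z)) + P_1\big)$ — more carefully, $[Q_1 - (z-z_0)P_1] - I_\cH = -P_1 - (z-z_0)P_1$ differs from a finite-rank operator, so $[Q_1-(z-z_0)P_1] - A(z) = (I_\cH - A(z)) + (\text{finite rank})$, and multiplying by the bounded operator $[Q_1-(z-z_0)P_1]^{-1}$ preserves membership in $\cF(\cH)$ or $\cB_p(\cH)$ (two-sided ideals), giving both directions of the equivalence since the factorization is symmetric.
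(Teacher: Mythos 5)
Your construction of $A_1(\cdot)$ and the surrounding analyticity, Fredholm, and Schatten-class arguments are essentially the paper's. However, you yourself flag the two spots where your proposal stops short, and those are exactly where the paper's proof does real work.

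\textbf{The defect inequality \eqref{3.12}.} You explicitly describe this as ``the main obstacle'' and leave the bookkeeping unfinished, and the route you sketch (through quotients $\ran(Q_1)/Q_1\ran(A_1(z_0))$) does not close: since $\ran A_1(z_0) \not\subseteq \ran Q_1$ in general, the subadditivity of codimensions you invoke is not available in the form you need. The paper's argument proceeds via the adjoint projection: from $A(z_0)=Q_1 A_1(z_0)$ one shows that if $Q_1^* f \in \ran(A_1(z_0))^\perp$, then $(f,A(z_0)g)_{\cH} = (Q_1^* f, A_1(z_0)g)_{\cH}=0$ for all $g$, so $f\in\ran(A(z_0))^\perp=\ker(Q_1^*)$ and hence $Q_1^*f=0$. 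This gives $\ran(Q_1^*)\cap\ran(A_1(z_0))^\perp=\{0\}$, which, compared against the decomposition $\cH=\ran(Q_1^*)\dotplus\ran(A(z_0))^\perp$, forces $\dim\bigl(\ran(A_1(z_0))^\perp\bigr)\leq\dim\bigl(\ran(A(z_0))^\perp\bigr)$. Passing to $Q_1^*$ is the idea your sketch is missing.

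\textbf{The pole-order statement.} Writing $A_1(z)^{-1}=A(z)^{-1}Q_1-(z-z_0)A(z)^{-1}P_1$, the second summand manifestly has pole order $\leq n_0-1$, but you assert the same for the full expression by saying the linear factor ``lowers the worst-case order.'' That doesn't apply to the first summand $A(z)^{-1}Q_1$, which has no $(z-z_0)$ prefactor; it could a priori retain order $n_0$. The paper handles this by taking any $f=A(z_0)g\in\ran(Q_1)$ and expanding $(z_0-z)^{n_0-1}A(z)^{-1}Q_1 f=(z_0-z)^{n_0-1}g+(z_0-z)^{n_0}A(z)^{-1}(z-z_0)^{-1}[A(z)-A(z_0)]g$, which is uniformly bounded near $z_0$, and then invokes the uniform boundedness principle to conclude $A(z)^{-1}Q_1$ also has order $\leq n_0-1$. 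The lower bound (order exactly $n_0-1$) then comes from $A(z)^{-1}=A_1(z)^{-1}[Q_1-(z-z_0)^{-1}P_1]$. You gesture at both facts (``one checks the leading coefficient does not vanish'') but give no argument.

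In short: same construction, same general plan, but the two lemmas you identify as the hard parts are not in fact proved, and both require the specific devices above.
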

\begin{proof} 
In the following let $z \in \Omega$. Since by hypothesis $z_0$ is a pole of $A(\cdot)^{-1}$ and hence an isolated singularity of 
$A(\cdot)^{-1}$, the second alternative of the analytic Fredholm theorem, Theorem \ref{t3.3}, 
is realized.  Due to assumption \eqref{3.4}, $\ran(A(z_0))$ 
 is closed in $\cH$. With respect to the decomposition $\cH = P_1\cH \dotplus Q_1\cH$ 
 one infers that 
\begin{equation}\lb{3.16B}
Q_1 - (z-z_0) P_1 = \begin{pmatrix} - (z-z_0) P_1 & 0 \\ 0 & Q_1 \end{pmatrix}.
\end{equation} 
In addition, the projection $P_1$ is finite-dimensional which may be seen as follows.  By \cite[p.~156, 267]{Ka80}, the adjoint $P_1^*$ is a projection onto $\ran(A(z_0))^{\perp}=\ker(A(z_0)^*)$.  However, $A(z_0)^*\in \Phi(\cH)$ since $A(z_0)\in \Phi(\cH)$, so that $P_1^*$ is a finite-dimensional projection,
\begin{equation}\lb{3.17B}
\dim(\ran((P_1^*)) = \dim(\ker(A(z_0)^*))<\infty.
\end{equation}
Evidently, \eqref{3.17B} also implies (cf., e.g., \cite[Theorem 6.1]{We80})
\begin{equation}
\dim(\ran((P_1)) = \dim(\ker(A(z_0)^*)) = \dim(\ran(P_1^*)) < \infty.    \lb{3.19AA}
\end{equation}
Next, the representation in \eqref{3.16B} implies
\begin{align}
& [Q_1 - (z-z_0) P_1]^{-1} 
= \begin{pmatrix} - (z-z_0)^{-1} P_1 & 0 \\ 0 & Q_1 \end{pmatrix}    \no \\
& \hspace*{3.02cm} = Q_1 - (z-z_0)^{-1} P_1, \quad z \in \Omega \backslash \{z_0\},    \lb{3.14a} \\ 
& {\det}_{\cH} (Q_1 - (z-z_0) P_1) = (z_0 - z)^{p_1}, \quad p_1 = \dim(\ran(P_1)).   
\lb{3.15} 
\end{align}
Thus,
\begin{equation}
A_1(z) = [Q_1 - (z-z_0) P_1]^{-1} A(z) = Q_1 A(z) - (z - z_0)^{-1} P_1 [A(z) - A(z_0)] 
\lb{3.22A} 
\end{equation}
is analytic on $\Omega$ since $P_1 A(z_0) = P_1 Q_1 A(z_0) = 0$ as $Q_1$ acts on 
$\ran(A(z_0))$ as the identity operator by hypothesis. In particular, 
\begin{equation}
A_1(z_0) = Q_1 A(z_0) - P_1 A'(z_0).     \lb{3.23A}
\end{equation}

Moreover, using once more that 
\begin{equation}
A_1(z) = \big[Q_1 - (z-z_0)^{-1} P_1\big] A(z), \quad z \in \Omega \backslash \{z_0\},   
\lb{3.18} 
\end{equation}
one notices that by hypothesis, $A(\cdot) \in \Phi(\cH)$ on $\Omega$, and that by 
\eqref{3.14a},
\begin{equation}
\big[Q_1 - (z-z_0)^{-1} P_1\big]^{-1} = [Q_1 - (z-z_0) P_1] \in \cB(\cH),\quad z \in \Omega \backslash \{z_0\},
\end{equation}
and analogously for its adjoint. In particular,  
$\ran\big([Q_1 - (z-z_0) P_1]^{-1}\big) = \cH$, and hence one also concludes that 
$\big[Q_1 - (z-z_0)^{-1} P_1\big] \in \Phi(\cH)$ for $z \in \Omega \backslash \{z_0\}$, and hence $A_1(z) \in \Phi(\cH)$ for $z \in \Omega \backslash \{z_0\}$. The remaining case $z=z_0$ now follows from \eqref{3.23A}, since $P_1$ and hence $P_1 A'(z_0)$ is of finite rank (thus, compact), $Q_1 = [I_{\cH} - P_1] \in \Phi(\cH)$ and $A(z_0) \in \Phi(\cH)$, implying $Q_1 A(z_0) \in \Phi(\cH)$, and the fact that a Fredholm operator plus a compact operator is again Fredholm (cf., e.g., \cite[Theorem~5.10]{Sc02}). 

Invariance of the Fredholm index as recorded in \eqref{3.13} 
is shown as follows. First, by Hypothesis \ref{h3.2} and alternative $(ii)$ in Theorem \ref{t3.3}, 
$\ind(A(z)) =0$ whenever $A(z)$ is boundedly invertible, in particular, this holds for a 
sufficiently small, punctured disk $D(z_0; \varepsilon) \backslash \{z_0\} \subset \Omega$ with center $z_0$, that is, for $0 < |z - z_0| < \varepsilon_0$ for $0 < \varepsilon_0$ sufficiently small. Since $A(\cdot)$ is analytic in $\Omega$, a perturbation argument, writing 
$A(z_0) = A(z) + [A(z_0) - A(z)]$ then yields (cf.\ \cite[Theorem~5.11]{Sc02})
\begin{equation}
\ind(A(z_0)) = \ind(A(z)) = 0,
\end{equation}
choosing $0 < \varepsilon_0$ sufficiently small. Precisely the same arguments apply to 
$\ind(A_1(\cdot))$ and hence yield \eqref{3.13}.  
 
Next, since $A(z_0) = Q_1 A_1(z_0)$, $Q_1$ maps $\ran(A_1(z_0))$ onto 
$\ran(A(z_0))$. Consequently, if $Q_1^* f \in \ran(A_1(z_0))^{\bot}$ for some $f \in \cH$, 
then 
\begin{equation} 
(f, A(z_0)g)_{\cH} = (f, Q_1 A_1(z_0)g)_{\cH} = (Q_1^*f, A_1(z_0)g)_{\cH} = 0 \, 
\text{ for all $g \in \cH$} 
\end{equation} 
implies $f \in \ran(A(z_0))^{\bot}$. Hence, one concludes that $Q_1^* f =0$ since 
$\ker(Q_1^*) = \ran(Q_1)^{\bot} = \ran(A(z_0))^{\bot}$ and thus,  
\begin{equation}
\ran(Q_1^*) \cap \ran(A_1(z_0))^{\bot} = \{0\}   \lb{3.18A}
\end{equation} 
yields the existence of a finite-dimensional (hence, closed) linear subspace 
$\cH_1 \subset \cH$ such that 
\begin{equation}
\cH = \ran(Q_1^*) \dotplus \ran(A_1(z_0))^{\bot} \dotplus \cH_1.   \lb{3.18B}
\end{equation}
Using the fact that 
\begin{equation}
\cH = \ran(Q_1^*) \dotplus \ker(Q_1^*) =
\ran(Q_1^*) \dotplus \ran(Q_1)^{\bot} = \ran(Q_1^*) \dotplus \ran(A(z_0))^{\bot}, 
\lb{3.18C}
\end{equation} 
one infers from \eqref{3.18B}, \eqref{3.18C}, and the paragraph following 
\eqref{3.10}, that 
\begin{align} \lb{3.18a} 
\begin{split} 
& \de(A_1(z_0)) = \dim\big(\ran(A_1(z_0))^{\bot}\big) 
= \dim_{[\ran(Q_1^*) \dotplus \cH_1]^{\perp}} (\cH)     \\
& \quad \leq \dim_{[\ran(Q_1^*)]^{\perp}} (\cH)  
= \dim\big(\ran(A(z_0))^{\bot}\big) = \de(A(z_0)).
\end{split} 
\end{align}
Finally, suppose that $A(\cdot)^{-1}$ has a pole of order $n_0 \in\bbN$ at $z_0$. 
Since $A(z)^{-1} = A_1(z)^{-1} [Q_1 - (z - z_0)^{-1} P_1 ]$, $A_1(z)^{-1}$ must 
have a pole at $z_0$ of order at least $n_0 -1$. Since 
\begin{equation}
A_1(z)^{-1} = - (z - z_0) A(z)^{-1} P_1 + A(z)^{-1} Q_1,   \lb{3.19a}
\end{equation}   
the order of the pole of the first term on the right-hand side of \eqref{3.19a} cannot exceed $n_0 - 1$. If 
$Q_1 f = A(z_0) g$, for some $f, g \in \cH$, one obtains 
\begin{align}
& (z_0 - z)^{n_0 -1} A(z)^{-1} Q_1 f = (z_0 - z)^{n_0 -1} A(z)^{-1} A(z_0) g   \no \\
& \quad = (z_0 - z)^{n_0 -1}  g + (z_0 - z)^{n_0} A(z)^{-1} (z - z_0)^{-1} [A(z) g - A(z_0) g]. 
\end{align}
For each fixed $f \in \cH$, the latter expression is uniformly bounded with respect to $z$ near 
$z_0$, and hence the uniform boundedness principle guarantees that also the pole of 
$A(z)^{-1} Q_1$ at $z_0$ cannot exceed $n_0 -1$. Thus, $A_1(\cdot)^{-1}$ has precisely a pole 
of order $n_0 - 1$ at $z_0$.  
To prove \eqref{3.14} for $z\neq z_0$, it suffices to note the pair of formulas
\begin{align}
I_{\cH} - A(z) &= [1 + (z - z_0)] P_1 + [Q_1 - (z - z_0) P_1] [I_{\cH} - A_1(z)],  \quad z \in \Omega, 
 \lb{3.19b} \\
 I_{\cH} - A_1(z) &= \big[1 + (z - z_0)^{-1}\big] P_1 + \big[Q_1 - (z - z_0)^{-1} P_1\big] [I_{\cH} - A(z)], 
\quad  z \in \Omega \backslash \{z_0\},       \lb{3.19c} 
\end{align}
and use the following facts:  $P_1\in \cF(\cH)\subset \cB_p(\cH)$, both $\cF(\cH)$ and $\cB_p(\cH)$ are closed under addition, and $\cB_p(\cH)$ is a two-sided ideal of $\cB(\cH)$, $1\leq p\leq \infty$.  To settle the case $z=z_0$, one uses \eqref{3.19b} to conclude that $[I_{\cH}-A(z_0)] \in \cF(\cH)$ (resp., $[I_{\cH}-A(z_0)] \in \cB_p(\cH))$ if 
$[I_{\cH}-A_1(z_0)] \in \cF(\cH)$ (resp., $[I_{\cH}-A_1(z_0)] \in \cB_p(\cH))$.  To arrive at the converse, one applies \eqref{3.10a} with $z=z_0$ to obtain
\begin{equation}
I_{\cH}-A_1(z_0) = I - (P_1+Q_1)A_1(z_0) = -P_1A_1(z_0) + [I_{\cH}-A(z_0)],
\end{equation}
noting that $P_1A_1(z_0)\in \cF(\cH)$.
\end{proof}

Still assuming that $A: \Omega \to \cB(\cH)$ satisfies Hypothesis \ref{h3.2} and that 
$A(\cdot)^{-1}$ has a pole at $z_0 \in \Omega$, we now decompose $\cH$ as follows. 
Introducing the Riesz projection $P(z)$ associated with $A(z)$, $z \in \cN(z_0)$ (cf., e.g., \cite[Sect.\
III.6]{Ka80}), with $\cN(z_0) \subset \Omega$ a sufficiently small neighborhood of $z_0$  
\begin{equation}
P(z)=\f{-1}{2\pi i} \ointctrclockwise_{\cC(0;\varepsilon_0)} d\zeta \, (A(z) -
\zeta I_{\cH})^{-1}, \quad z \in \cN(z_0),   \lb{3.21a}
\end{equation}
then $P(\dott)$ is analytic on $\cN(z_0)$ and 
\begin{equation}
\dim(\ran(P(z))) < \infty, \quad z \in \cN(z_0). 
\end{equation}
In addition, introduce the projections 
\begin{equation}
Q(z)=I_{\cH} - P(z), \quad z \in \cN(z_0).   \lb{3.22a}
\end{equation}
Next, following Wolf \cite{Wo52} one introduces the transformation
\begin{equation}
T(z)=P(z_0) P(z)+Q(z_0) Q(z) = P(z_0) P(z)+[I_{\cH}-P(z_0)][I_{\cH}-P(z)], \quad z \in \cN(z_0),
\lb{3.16}
\end{equation}
such that
\begin{equation}
P(z_0) T(z) = T(z) P(z), \quad Q(z_0) T(z) = T(z) Q(z), \quad z \in \cN(z_0).   \lb{3.17}
\end{equation}
In addition, for $|z-z_0|$ sufficiently small, also $T(\cdot)^{-1}$ is analytic, 
\begin{equation}
T(z)= I_{\cH} + \Oh(z-z_0), \quad |z - z_0| \, \text{ sufficiently small},      \lb{3.19A}
\end{equation}
and without loss of generality we may assume in the following that $T(\cdot)$ and $T(\cdot)^{-1}$ 
are analytic on $\cN(z_0)$. This yields the decomposition of $\cH$ into
\begin{equation}
\cH = P(z_0) \cH \dotplus Q(z_0) \cH     \lb{3.19B} 
\end{equation}
and the associated $2 \times 2$ block operator decomposition of $T(z) A(z) T(z)^{-1}$ into 
\begin{equation}
T(z) A(z) T(z)^{-1} = \begin{pmatrix} F(z) & 0 \\ 0 & G(z) \end{pmatrix}, \quad z \in \cN(z_0), 
\lb{3.19}
\end{equation}
where $F(\cdot)$ and $G(\cdot)$ are analytic on $\cN(z_0)$, and, again without loss of generality, 
$G(\cdot)$ is boundedly invertible on $\cN(z_0)$,  
\begin{equation}
G(z)^{-1} \in\cB(\ran(Q(z_0))),  \quad z \in \cN(z_0).     \lb{3.20} 
\end{equation}   

Next, we introduce more notation: Let $\Omega_0 \subseteq \bbC$ be open and 
connected and $f\colon\Omega_0\to\bbC\cup\{\infty\}$ be meromorphic and not
identically vanishing on $\Omega_0$. The multiplicity function $m(z;f)$, $z\in\Omega_0$, is then 
defined by
\begin{align}
m(z;f)&=\begin{cases} k, & \text{if $z$ is a zero of $f$ of order $k$,} \\
-k, & \text{if $z$ is a pole of order $k$,} \\
0, & \text{otherwise} \end{cases} \lb{3.21} \\
&= \f{1}{2\pi i}\ointctrclockwise_{C(z; \varepsilon) } d\zeta \,
\f{f'(\zeta)}{f(\zeta)}, \quad z\in\Omega_0,      \lb{3.22}
\intertext{for $\varepsilon>0$
sufficiently small. Here the circle $C(z; \varepsilon)$ is chosen sufficiently small such
that $C(z; \varepsilon)$ contains no other singularities or zeros of $f$ except, possibly, $z$. 
If $f$ vanishes identically on $\Omega_0$, one
defines} m(z;f)&= \infty,  \quad z\in\Omega_0. \lb{3.23}
\end{align}

Given the block decomposition \eqref{3.19}, we follow Howland in introducing the quantity 
$\nu(z_0; A(\cdot))$ by 
\begin{equation}
\nu(z_0; A(\cdot)) = \begin{cases} m(z_0; {\det}_{\ran(P(z_0))} (F(\cdot))), & 
\text{if ${\det}_{\ran(P(z_0))} (F(\cdot)) \not\equiv 0$ on $\cN(z_0)$},  \\
\infty, & \text{if ${\det}_{\ran(P(z_0))} (F(\cdot)) \equiv 0$ on $\cN(z_0)$.}
\end{cases}
\end{equation}
We also recall the abbreviation
\begin{equation}
m_g(0; A(z_0)) = \dim(\ker(A(z_0))).  
\end{equation}

Repeated applications of Theorem \ref{t3.4} then yields the following principal factorization result of \cite{Ho71} (again, we extend it to the case of Fredholm 
operators $A(\cdot)$):

\begin{theorem} [\cite{Ho71}] \lb{t3.5}
Assume that $A:\Omega \to \cB(\cH)$ satisfies Hypothesis \ref{h3.2} and let $z_0 \in \Omega$ 
be a pole of $A(\cdot)^{-1}$ of 
order $n_0 \in \bbN$. Then there exist projections $P_j$ and $Q_j = I_{\cH} - P_j$ in $\cH$ such that with 
$p_j = \dim(\ran(P_j))$, $1 \leq j \leq n_0$, one infers that 
\begin{equation}
A(z) = [Q_1 - (z - z_0) P_1] [Q_2 - (z - z_0) P_2] \cdots [Q_{n_0} - (z - z_0) P_{n_0}] A_{n_0}(z), 
\quad z \in \Omega,    \lb{3.26}
\end{equation}
and 
\begin{equation}
1 \leq p_{n_0} \leq p_{n_0 -1} \leq \cdots \leq p_2 \leq p_1 < \infty,     \lb{3.27}
\end{equation}
where
\begin{align} 
& \text{$A_{n_0}(\cdot)$ is analytic on $\Omega$,}    \lb{3.28} \\
& A_{n_0}(z) \in \Phi(\cH), \quad z\in\Omega,      \lb{3.29} \\ 
& \ind(A(z)) = \ind(A_{n_0}(z)) = 0, \quad z \in \Omega, \; |z - z_0| \, \text{ sufficiently small,}     \lb{3.29a} \\
& A_{n_0}(z)^{-1} \in \cB(\cH), \quad z \in \Omega, \; |z - z_0| \, \text{ sufficiently small.}      \lb{3.30} 
\end{align}
In addition, 
\begin{equation}
p_1 = \dim(\ker(A(z_0)) = m_g(0; A(z_0)),    \lb{3.31}
\end{equation}
and hence
\begin{equation}
\nu(z_0; A(\cdot)) = \sum_{j=1}^{n_0} p_j \geq m_g(0; A(z_0)), 
\quad \nu(z_0; A(\cdot)) \geq n_0.   \lb{3.32}
\end{equation}
Finally, 
\begin{align}
& [I_{\cH} - A(\cdot)] \in \cF(\cH) \, \text{ $($resp., $\cB_p(\cH)$ for some $1 \leq p \leq \infty$$)$}  \lb{3.42} \\ 
& \quad \text{if and only if } \, 
[I_{\cH} - A_{n_0}(\cdot)] \in \cF(\cH) \, \text{ $($resp., $\cB_p(\cH)$ for some $1 \leq p \leq \infty$$)$.}  \no 
\end{align}
\end{theorem}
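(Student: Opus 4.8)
The plan is to prove Theorem \ref{t3.5} by induction on the pole order $n_0$, using Theorem \ref{t3.4} repeatedly as the inductive step. For the base case $n_0 = 1$, a single application of Theorem \ref{t3.4} produces $A(z) = [Q_1 - (z - z_0) P_1] A_1(z)$ with $A_1(\cdot)^{-1}$ having a pole of order $n_0 - 1 = 0$ at $z_0$, i.e., $A_1(\cdot)^{-1}$ is analytic (equivalently, boundedly invertible) near $z_0$; this immediately gives \eqref{3.30}, while \eqref{3.28}, \eqref{3.29}, \eqref{3.29a}, and the final statement \eqref{3.42} come directly from the corresponding clauses of Theorem \ref{t3.4} (with $A_{n_0} = A_1$). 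For the inductive step, suppose the result holds for pole order $n_0 - 1$; given $A(\cdot)^{-1}$ with a pole of order $n_0$ at $z_0$, one application of Theorem \ref{t3.4} yields $A(z) = [Q_1 - (z-z_0) P_1] A_1(z)$ where $A_1(\cdot)$ again satisfies Hypothesis \ref{h3.2} (by \eqref{3.12o}, \eqref{3.11}), is boundedly invertible somewhere near $z_0$ by \eqref{3.13}, and has $A_1(\cdot)^{-1}$ with a pole of order exactly $n_0 - 1$ at $z_0$. Applying the inductive hypothesis to $A_1(\cdot)$ produces projections $P_2, \dots, P_{n_0}$ and an analytic Fredholm factor $A_{n_0}(\cdot)$ with $A_{n_0}(\cdot)^{-1}$ boundedly invertible near $z_0$, and concatenating the factorizations gives \eqref{3.26}.

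Next I would verify the monotonicity \eqref{3.27} of the ranks $p_j = \dim(\ran(P_j))$. Here the key is the defect inequality \eqref{3.12}: at each stage, $P_{j+1}^*$ projects onto $\ran(A_j(z_0))^\perp$, so $p_{j+1} = \dim(\ran(P_{j+1})) = \dim(\ker(A_j(z_0)^*)) = \de(A_j(z_0))$ by \eqref{3.19AA} applied to $A_j$, and similarly $p_j = \de(A_{j-1}(z_0))$. Since $\ind(A_j(z_0)) = 0$ for all $j$ (all factors are Fredholm of index zero near $z_0$ by \eqref{3.29a}), one has $\de(A_j(z_0)) = \dim(\ker(A_j(z_0)^*)) = \dim(\ker(A_j(z_0)))$, and the inequality \eqref{3.12} applied at each stage, $\de(A_j(z_0)) \leq \de(A_{j-1}(z_0))$, chains together to yield $p_{n_0} \leq p_{n_0 - 1} \leq \cdots \leq p_1$. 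The lower bound $p_{n_0} \geq 1$ holds because if $p_{n_0} = 0$ then $P_{n_0} = 0$, making $Q_{n_0} - (z - z_0) P_{n_0} = I_{\cH}$ and the factorization stop one step early, contradicting that the pole order of $A(\cdot)^{-1}$ is exactly $n_0$ (the number of nontrivial factors must account for the full pole order, each contributing at most a simple pole via \eqref{3.14a}). Finiteness $p_1 < \infty$ is \eqref{3.19AA}.

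For \eqref{3.31}, I would identify $p_1 = \dim(\ran(P_1)) = \dim(\ker(A(z_0)^*)) = \dim(\ker(A(z_0)))$, the last equality again using $\ind(A(z_0)) = 0$; the quantity $\dim(\ker(A(z_0)))$ is by definition $m_g(0; A(z_0))$. For \eqref{3.32}, the identity $\nu(z_0; A(\cdot)) = \sum_{j=1}^{n_0} p_j$ follows from the multiplicativity of determinants: taking determinants (on the relevant finite-dimensional range spaces, compatibly with the block decomposition \eqref{3.19}) through the factorization \eqref{3.26}, each factor $[Q_j - (z-z_0) P_j]$ contributes $(z_0 - z)^{p_j}$ by \eqref{3.15}, while the analytic invertible tail $A_{n_0}(\cdot)$ contributes a nonvanishing factor near $z_0$, so the order of vanishing of ${\det}_{\ran(P(z_0))}(F(\cdot))$ at $z_0$ — which is $\nu(z_0; A(\cdot))$ — equals $\sum_j p_j$; one must check that the $F$-block of $A(\cdot)$ sees precisely this and the $G$-block remains invertible, which is consistent since the $G$-block corresponds to the boundedly invertible part. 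Then $\nu(z_0; A(\cdot)) = \sum_j p_j \geq p_1 = m_g(0; A(z_0))$ and $\nu(z_0; A(\cdot)) = \sum_j p_j \geq n_0$ since each $p_j \geq 1$.

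The main obstacle I anticipate is the bookkeeping in \eqref{3.32}: carefully relating the determinant ${\det}_{\ran(P(z_0))}(F(\cdot))$ from the Wolf-transformation block decomposition \eqref{3.19} to the product of the finite-dimensional determinants ${\det}_{\cH}(Q_j - (z-z_0)P_j) = (z_0 - z)^{p_j}$ appearing along the factorization chain. The subtlety is that the projections $P_j$ for $j \geq 2$ live naturally relative to the successively transformed factors $A_{j-1}(\cdot)$, not relative to the original $P(z_0)$-decomposition, so one needs either a compatibility argument showing the cumulative product of the scalar factors $(z_0 - z)^{p_1 + \cdots + p_{n_0}}$ captures exactly the zero of the $F$-block (using that $A_{n_0}(\cdot)^{-1}$ is analytic and invertible, hence contributes no zero or pole), or an independent computation of $\nu(z_0; A(\cdot))$ via the winding-number formula \eqref{3.22} applied to ${\det}_{\ran(P(z_0))}(F(\cdot))$, combined with the fact that outside a small disk all the factors are invertible. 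Everything else — analyticity, the Fredholm and index properties, and the trace-ideal membership \eqref{3.42} — propagates routinely through the induction from the corresponding clauses of Theorem \ref{t3.4}.
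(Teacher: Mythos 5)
Your overall approach is the same as the paper's: iterate Theorem \ref{t3.4}, and the routine conclusions \eqref{3.26}, \eqref{3.28}--\eqref{3.29a}, \eqref{3.42} do indeed propagate directly through the iteration as you say, \eqref{3.30} following from index zero plus injectivity plus the closed graph theorem. Your argument for \eqref{3.27} (via the defect inequality \eqref{3.12} applied at each stage, plus the index-zero identification $\de(A_j(z_0)) = \dim(\ker(A_j(z_0)))$) is also sound, and your justification that $p_{n_0}\geq 1$, though not spelled out in the paper, is correct.

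Where you stop short is precisely the part that carries the weight of the theorem: the identity $\nu(z_0;A(\cdot)) = \sum_{j=1}^{n_0} p_j$ in \eqref{3.32}. You correctly identify this as the main obstacle, but you only gesture at two routes without carrying either out. Moreover, the first route as you sketch it --- ``taking determinants through the factorization \eqref{3.26}, with $A_{n_0}(\cdot)$ contributing a nonvanishing factor'' --- has a real flaw: $A_{n_0}(z)$ is merely Fredholm, not of the form $I_{\cH}$ plus trace class, so $\det_{\cH}(A_{n_0}(z))$ is not defined, and the multiplicativity argument cannot be run naively on the factorization as written. The paper's proof exists to fix exactly this. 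It rewrites each factor as $Q_j - (z-z_0)P_j = I_{\cH} - [1+(z-z_0)]P_j$, collecting the product into $I_{\cH} - F_0(z)$ with $F_0 \in \cF(\cH)$ analytic, conjugates by the Wolf transformation $T(z)$ of \eqref{3.16} to bring the block decomposition \eqref{3.19} into play, and then multiplies on the right by $\diag(I_{\ran(P(z_0))}, G(z)^{-1})$. Only after this manipulation does the problematic Fredholm factor disappear: one obtains
\begin{equation*}
T(z)A_{n_0}(z)T(z)^{-1}\begin{pmatrix} I_{\ran(P(z_0))} & 0 \\ 0 & G(z)^{-1}\end{pmatrix} = I_{\cH} - F_4(z)
\end{equation*}
with $F_4(\cdot)\in\cF(\cH)$ analytic on $\cN(z_0)$ and $I_{\cH} - F_4(z)$ boundedly invertible there, so that $\det_{\cH}(I_{\cH}-F_4(z))\neq 0$ on $\cN(z_0)$. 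Putting the pieces back together yields
\begin{equation*}
{\det}_{\cH}\!\left(\begin{pmatrix} F(z) & 0 \\ 0 & I_{\ran(Q(z_0))}\end{pmatrix}\right) = \left(\prod_{j=1}^{n_0}(z_0-z)^{p_j}\right){\det}_{\cH}(I_{\cH}-F_4(z)),
\end{equation*}
from which the order of vanishing of $\det_{\ran(P(z_0))}(F(\cdot))$ at $z_0$, i.e., $\nu(z_0;A(\cdot))$, is read off as $\sum_j p_j$. This chain of finite-rank repackagings, and not an appeal to ``multiplicativity of determinants along \eqref{3.26},'' is what makes the argument rigorous; without it, your proof of \eqref{3.32} is incomplete.
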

\begin{proof} 
Since by hypothesis $z_0$ is a pole of $A(\cdot)^{-1}$ and hence an isolated singularity of 
$A(\cdot)^{-1}$, the second alternative of the analytic Fredholm theorem, Theorem \ref{t3.3}, 
is realized. Applying Theorem \ref{t3.4} $n_0$ times, one obtains the $n_0$ factors  
$[Q_j - (z - z_0) P_j]$ and ends up with the facts \eqref{3.28}--\eqref{3.29a}.  In addition, $z_0$ is \textit{not} a pole of $A_{n_0}(\,\cdot\,)^{-1}$.

The bounded invertibility of $A_{n_0}(\cdot)$ in a suffficiently small punctured disk centered at 
$z_0$ is clear from that of $A(\cdot)$, \eqref{3.14a}, and \eqref{3.26}. To prove that also 
$A_{n_0}(z_0)^{-1} \in \cB(\cH)$ one can argue as follows. Since $A_{n_0}(\cdot)^{-1}$ has no pole 
at $z=z_0$, $A_{n_0}(z_0)$ is injective, $\ker(A_{n_0}(z_0)) = \{0\}$. In addition, since $A_{n_0}(z_0)$ is bounded and hence closed, $A_{n_0}(z_0)^{-1}$ is closed as well (cf., e.g., \cite[p.\ 165]{Ka80}). As $\ind(A_{n_0}(z_0)) = \dim(\ker(A_{n_0}(z_0))) = 0$, also $\dim(\ker(A_{n_0}(z_0)^*)) = 0$, and hence 
$\ran(A_{n_0}(z_0)) = \cH$, since $\ran(A_{n_0}(z_0))$ is closed in $\cH$. Hence, $A_{n_0}(z_0)^{-1}$ is 
defined on all of $\cH$ and an application of the closed graph theorem (see, e.g., 
\cite[Theorem\ III.5.20]{Ka80}) then yields $A_{n_0}(z_0)^{-1} \in \cB(\cH)$ and hence proves \eqref{3.30}.  

Equation \eqref{3.31} is clear from the decomposition 
$\cH = P_1 \cH \dotplus Q_1 \cH$ with $Q_1 \cH = \ran(A(z_0))$.  In light of the second alternative of the analytic Fredholm theorem, in particular \eqref{3.5} and \eqref{3.7B}, $A(\,\cdot\,)^{-1}$ has a finite-dimensional residue at $z_0$. In addition, 
\begin{equation} 
\dim(\ker(A(z_0)^{\sharp})) = \dim(\ran(P_1^{\sharp})) = m_g(z_0; A(z_0)^{\sharp}) 
= p_1 < \infty,
\end{equation}   
where $T^{\sharp}$ represents $T$ or $T^*$. 
The inequality \eqref{3.12} for the defects then yields the inequalities \eqref{3.27}.

Next, writing $[Q_j - (z - z_0) P_j] = I_{\cH} - [1 + (z - z_0)] P_j$, $1\leq j \leq n_0$, one can rewrite 
\eqref{3.26} in the form
\begin{equation}
A(z) = [I_{\cH} - F_0(z)] A_{n_0}(z), \quad z \in \Omega,       \lb{3.41} 
\end{equation} 
with $F_0(\cdot) \in \cF(\cH)$ analytic on $\Omega$. Similarly, writing 
$[Q_j - (z - z_0) P_j]^{-1} = Q_j - (z - z_0)^{-1} P_j = I_{\cH} - \big[1 + (z - z_0)^{-1}\big] P_j$, 
$1\leq j \leq n_0$, one obtains that 
\begin{equation}
[I_{\cH} - F_0(z)]^{-1} = I_{\cH} + F_1 (z), \quad z \in \cN(z_0), 
\end{equation}
for a sufficiently small neighborhood $\cN(z_0) \subset \Omega$ of $z_0$, and with 
$F_1(\cdot) \in \cF(\cH)$ meromorphic on $\cN(z_0)$ and analytic on $\cN(z_0) \backslash \{z_0\}$. 
Thus, one computes using \eqref{3.19} and \eqref{3.41} 
\begin{align}
\begin{split} 
& T(z) A(z) T(z)^{-1} = \begin{pmatrix} F(z) & 0 \\ 0 & G(z) \end{pmatrix}    \\
& \quad = \big[I_{\cH} - T(z) F_0(z) T(z)^{-1}\big] 
T(z) A_{n_0}(z) T(z)^{-1}, \quad z \in \cN(z_0), 
\end{split} 
\end{align}
and hence
\begin{align}
& \begin{pmatrix} F(z) & 0 \\ 0 & I_{\ran(Q(z_0))} \end{pmatrix} = T(z) A(z) T(z)^{-1} 
\begin{pmatrix} I_{\ran(P(z_0))} & 0 \\ 0 & G(z)^{-1} \end{pmatrix}    \no \\
& \quad = \big[I_{\cH} - T(z) F_0(z) T(z)^{-1}\big] T(z) A_{n_0}(z) T(z)^{-1} 
\begin{pmatrix} I_{\ran(P(z_0))} & 0 \\ 0 & G(z)^{-1} \end{pmatrix},    \lb{3.37} \\
& \hspace*{9.45cm} z \in \cN(z_0),    \no 
\end{align} 
implying 
\begin{align}
& T(z) A_{n_0}(z) T(z)^{-1} \begin{pmatrix} I_{\ran(P(z_0))} & 0 \\ 0 & G(z)^{-1} \end{pmatrix}  \no \\
& \quad = \big[I_{\cH} - T(z) F_0(z) T(z)^{-1}\big]^{-1}   
\begin{pmatrix} F(z) & 0 \\ 0 & I_{\ran(Q(z_0))} \end{pmatrix}    \no \\
& \quad = \big[I_{\cH} + T(z) F_1(z) T(z)^{-1}\big]   
\begin{pmatrix} I_{\ran(P(z_0))} + [F(z) - I_{\ran(P(z_0))}] & 0 \\ 0 & I_{\ran(Q(z_0))} \end{pmatrix}    \no \\
& \quad = [I_{\cH} + F_2(z)] [I_{\cH} - F_3(z)]    \no \\
& \quad = [I_{\cH} - F_4(z)], \quad z \in \cN(z_0).     \lb{3.38}
\end{align}
In \eqref{3.38}, we have set
\begin{align}
F_2(z) &= T(z) F_1(z) T(z)^{-1},\\
F_3(z) &= I_{\cH} - \begin{pmatrix} F(z) & 0 \\ 0 & I_{\ran(Q(z_0))} \end{pmatrix},\\[1mm] 
F_4(z) &= - F_2(z) + F_2(z)F_3(z) + F_3(z),\quad z\in \cN(z_0),
\end{align}
where $F_k(\cdot) \in \cF(\cH)$, $2 \leq k \leq 4$, $F_3(\cdot)$ is analytic on $\cN(z_0)$, 
$F_2(\cdot), F_4(\cdot)$ are meromorphic on $\cN(z_0)$ and analytic on $\cN(z_0) \backslash \{z_0\}$. 
In fact, since the left-hand side of \eqref{3.38} is analytic and boundedly invertible on $\cN(z_0)$, 
$F_4(\cdot)$ is analytic on $\cN(z_0)$ and 
\begin{equation}
[I_{\cH} - F_4(z)]^{-1} \in \cB(\cH), \quad {\det}_{\cH}(I_{\cH} - F_4(z)) \neq 0, \quad z \in \cN(z_0). 
\end{equation} 
Combining \eqref{3.37} and \eqref{3.38} 
then yields
\begin{align}
& \begin{pmatrix} F(z) & 0 \\ 0 & I_{\ran(Q(z_0))} \end{pmatrix} = 
\big[I_{\cH} - T(z) F_0(z) T(z)^{-1}\big] [I_{\cH} - F_4(z)]    \no \\
 & \quad = T(z) [Q_1 - (z - z_0) P_1] [Q_2 - (z - z_0) P_2] \cdots [Q_{n_0} - (z - z_0) P_{n_0}] 
T(z)^{-1}   \no \\
& \qquad \times [I_{\cH} - F_4(z)],  \quad z \in \cN(z_0). 
\end{align}
Thus, by \eqref{3.15}, 
\begin{equation}
{\det}_{\cH} \left(\begin{pmatrix} F(z) & 0 \\ 0 & I_{\ran(Q(z_0))} \end{pmatrix}\right) 
= \bigg(\prod_{j=1}^{n_0} (z_0 - z)^{p_j}\bigg) {\det}_{\cH}(I_{\cH} - F_4(z)), \quad z \in \cN(z_0), 
\end{equation}
and hence \eqref{3.32} holds. 

Relations \eqref{3.42} are clear from \eqref{3.14}. 
\end{proof} 

\begin{corollary} [\cite{Ho71}] \lb{c3.6}
Assume that $A:\Omega \to \cB(\cH)$ satisfies Hypothesis \ref{h3.2} and let $z_0 \in \Omega$ 
be a pole of $A(\cdot)^{-1}$. Then $z_0$ is a simple pole of $A(\cdot)^{-1}$ if and only if 
$\nu(z_0; A(\cdot)) = m_g(0; A(z_0))$.
\end{corollary}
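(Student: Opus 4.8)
The plan is to extract both directions of the equivalence directly from the factorization established in Theorem~\ref{t3.5}, so that only an elementary arithmetic observation remains. Since $z_0$ is a pole of $A(\cdot)^{-1}$, the analytic Fredholm alternative (Theorem~\ref{t3.3}) places us in case~$(ii)$; let $n_0 \in \bbN$ denote the order of that pole. Theorem~\ref{t3.5} then furnishes projections $P_j$, $Q_j = I_{\cH} - P_j$, $1 \leq j \leq n_0$, with $p_j = \dim(\ran(P_j))$, such that the chain of inequalities $1 \leq p_{n_0} \leq \cdots \leq p_1 < \infty$ holds (by~\eqref{3.27}), together with the two identities $p_1 = m_g(0; A(z_0))$ (by~\eqref{3.31}) and $\nu(z_0; A(\cdot)) = \sum_{j=1}^{n_0} p_j$ (by~\eqref{3.32}). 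All the ingredients are therefore already available.

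For the forward implication I would argue as follows: if $z_0$ is a \emph{simple} pole then $n_0 = 1$, the sum $\sum_{j=1}^{n_0} p_j$ reduces to the single term $p_1$, and combining $\nu(z_0; A(\cdot)) = p_1$ with $p_1 = m_g(0; A(z_0))$ gives $\nu(z_0; A(\cdot)) = m_g(0; A(z_0))$. For the converse, assume $\nu(z_0; A(\cdot)) = m_g(0; A(z_0))$; since $A(z_0) \in \Phi(\cH)$, the right-hand side equals $\dim(\ker(A(z_0))) < \infty$, so we are in the finite case of the definition of $\nu$ and hence $\nu(z_0; A(\cdot)) = \sum_{j=1}^{n_0} p_j$ with all $p_j$ finite. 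Substituting $m_g(0; A(z_0)) = p_1$ yields $\sum_{j=2}^{n_0} p_j = 0$; since each $p_j \geq 1$ by~\eqref{3.27}, this forces $n_0 = 1$, i.e., $z_0$ is a simple pole of $A(\cdot)^{-1}$.

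There is essentially no obstacle once Theorem~\ref{t3.5} is in place: the entire content of the corollary is the bookkeeping remark that a sum of positive integers equals its (largest) first term precisely when there is only one term. The only points deserving a word of care are the two ``finiteness'' checks — that the hypothesis $\nu(z_0; A(\cdot)) = m_g(0; A(z_0))$ indeed lands us in the finite branch of the definition of $\nu$ (immediate from the Fredholm property of $A(z_0)$), so that the identity $\nu(z_0; A(\cdot)) = \sum_{j=1}^{n_0} p_j$ is the relevant one rather than the value $\infty$; and, in the simple-pole direction, that ${\det}_{\ran(P(z_0))}(F(\cdot)) \not\equiv 0$ on $\cN(z_0)$, which holds because $A(z)$, and hence $F(z)$ restricted to $\ran(P(z_0))$, is boundedly invertible for $z$ near $z_0$ with $z \neq z_0$.
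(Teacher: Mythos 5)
Your proof is correct and is essentially the intended (and only natural) derivation from Theorem~\ref{t3.5}: the paper states the corollary without a separate proof precisely because it follows immediately from \eqref{3.27}, \eqref{3.31}, and \eqref{3.32} by the bookkeeping observation you make about a sum of positive integers reducing to its first term iff there is one term. Your side remarks on the finiteness of $\nu$ and on ${\det}_{\ran(P(z_0))}(F(\cdot))\not\equiv 0$ are both sound (the latter could also be read off directly from \eqref{3.32}, which already records $\nu(z_0;A(\cdot))=\sum_{j=1}^{n_0}p_j<\infty$ whenever $z_0$ is a pole of $A(\cdot)^{-1}$), but they do not change the substance of the argument.
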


In the remainder of this section we briefly derive the analogous factorizations in Theorems 
\ref{t3.4} and \ref{t3.5} but with the order of factors in \eqref{3.10a} and \eqref{3.26} interchanged. 
This appears to be a new result.

\begin{theorem} \lb{t3.7}
Assume that $A:\Omega \to \cB(\cH)$ satisfies Hypothesis \ref{h3.2} and let 
$z_0 \in \Omega$ be a pole of $A(\cdot)^{-1}$. Denote by $\wti P_1$ any projection 
onto $\ker(A(z_0))$ and let $\wti Q_1 = I_{\cH} - \wti P_1$. 
Then,
\begin{equation}
A(z) = \wti A_1(z) \big[\wti Q_1 - (z-z_0) \wti P_1\big], \quad z \in \Omega,     \lb{3.59A} 
\end{equation}
where 
\begin{align} 
& \text{$\wti A_1(\cdot)$ is analytic on $\Omega$,}      \lb{3.60a} \\
& \wti A_1(z) \in \Phi(\cH), \quad z\in\Omega,    \lb{3.61a} \\ 
& \de\big(\wti A_1(z_0)\big) \leq \de(A(z_0)),    \lb{3.62a} \\
& \ind\big(\wti A(z)\big) = \ind\big(\wti A_1(z)\big) = 0, 
\quad z \in \Omega, \; |z - z_0| \, \text{ sufficiently small.} \lb{3.62b}
\end{align}
If $z_0$ is a pole of $A(\cdot)^{-1}$ of order $n_0\in\bbN$, then $z_0$ is a pole 
of $\big(\wti A_1(\cdot)\big)^{-1}$ of order $n_0 - 1$. Finally, 
\begin{align}
& [I_{\cH} - A(\cdot)] \in \cF(\cH) \, \text{ $($resp., $\cB_p(\cH)$ for some $1 \leq p \leq \infty$$)$}  
\lb{3.63a} \\ 
& \quad \text{if and only if } \, 
\big[I_{\cH} - \wti A_1(\cdot)\big] \in \cF(\cH) \, \text{ $($resp., $\cB_p(\cH)$ for some $1 \leq p \leq \infty$$)$.}   \no 
\end{align}
\end{theorem}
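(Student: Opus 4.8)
The plan is to mirror the proof of Theorem~\ref{t3.4}, but factoring on the \emph{right} rather than on the left. The basic observation is that $\wti P_1$ is a projection onto $\ker(A(z_0))$, and since $A(z_0)\in\Phi(\cH)$ this kernel is finite-dimensional, so $\wti P_1 \in \cF(\cH)$ and $p_1 := \dim(\ran(\wti P_1)) = m_g(0;A(z_0)) < \infty$. With respect to the (not necessarily orthogonal) decomposition $\cH = \wti P_1\cH \dotplus \wti Q_1\cH$ one has, exactly as in \eqref{3.16B} and \eqref{3.14a},
\begin{equation}
\big[\wti Q_1 - (z-z_0)\wti P_1\big]^{-1} = \wti Q_1 - (z-z_0)^{-1}\wti P_1, \quad z\in\Omega\backslash\{z_0\},
\end{equation}
with $\det_{\cH}\big(\wti Q_1 - (z-z_0)\wti P_1\big) = (z_0 - z)^{p_1}$. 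One then \emph{defines}
\begin{equation}
\wti A_1(z) = A(z)\big[\wti Q_1 - (z-z_0)\wti P_1\big]^{-1} = A(z)\wti Q_1 - (z-z_0)^{-1}[A(z) - A(z_0)]\wti P_1, \quad z\in\Omega,
\end{equation}
where the removability of the apparent singularity at $z_0$ uses $A(z_0)\wti P_1 = 0$ (since $\ran(\wti P_1)\subseteq\ker(A(z_0))$); this gives \eqref{3.59A} and \eqref{3.60a}, with $\wti A_1(z_0) = A(z_0)\wti Q_1 - A'(z_0)\wti P_1$.

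For \eqref{3.61a} and \eqref{3.62b} I would argue precisely as in Theorem~\ref{t3.4}: on $\Omega\backslash\{z_0\}$ the operator $\wti Q_1 - (z-z_0)^{-1}\wti P_1$ is boundedly invertible with bounded inverse (hence Fredholm of index $0$), so $\wti A_1(z)\in\Phi(\cH)$ there as a product of Fredholm operators; at $z=z_0$ one uses $\wti A_1(z_0) = A(z_0)\wti Q_1 - A'(z_0)\wti P_1$ together with the facts that $A(z_0)\wti Q_1 \in \Phi(\cH)$ (being $A(z_0)$ composed with the Fredholm operator $\wti Q_1 = I_{\cH} - \wti P_1$, $\wti P_1$ finite-rank) and $A'(z_0)\wti P_1 \in \cF(\cH)$, invoking ``Fredholm plus compact is Fredholm'' \cite[Theorem~5.10]{Sc02}. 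Index invariance \eqref{3.62b} follows from the analytic-perturbation argument \cite[Theorem~5.11]{Sc02} exactly as in the proof of \eqref{3.13}, since $A(z)$ is boundedly invertible (hence of index $0$) on a punctured disk about $z_0$. The defect inequality \eqref{3.62a} I expect to be the main obstacle, since the clean kernel/range duality used in \eqref{3.18A}--\eqref{3.18a} now has to be run ``the other way''; the cleanest route is to pass to adjoints: from \eqref{3.59A} one gets $A(z)^* = \big[\wti Q_1^* - (z-z_0)\wti P_1^*\big]\wti A_1(z)^*$, where $\wti P_1^*$ is a projection onto $\ran(\wti P_1^*) = \ker(A(z_0)^*)^{\perp\,\perp}$— more usefully, $\wti P_1^*$ projects \emph{along} $\ker(\wti P_1^*) = \ran(\wti P_1)^\perp = \ker(A(z_0))^\perp$, and $\ran(\wti Q_1^*) = \ker(\wti P_1^*)^{?}$; tracking this one recognizes the adjoint relation as an instance of the \emph{left}-factorization of Theorem~\ref{t3.4} applied to $A(\cdot)^*$ (note $\ran(A(z_0)^*) = \ker(A(z_0))^\perp$ and $\wti Q_1^*$ is a projection onto that range), whence Theorem~\ref{t3.4}'s conclusion $\de(A_1(z_0))\leq\de(A(z_0))$ applied to $A^*$ reads $\dim(\ker(\wti A_1(z_0)^*)) \leq \dim(\ker(A(z_0)^*))$; combining with the index-zero property $\de(T) = \dim(\ker(T^*))$ for $T\in\Phi(\cH)$ with $\ind(T)=0$ then yields \eqref{3.62a}.

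For the pole-order statement, since $A(z)^{-1} = \big[\wti Q_1 - (z-z_0)^{-1}\wti P_1\big]\wti A_1(z)^{-1}$ and $\wti A_1(z)^{-1} = \big[\wti Q_1 - (z-z_0)^{-1}\wti P_1\big]^{-1} A(z)^{-1} = \big[\wti Q_1 - (z-z_0)\wti P_1\big]A(z)^{-1}$, the same uniform-boundedness argument as in the proof of Theorem~\ref{t3.4} (applied term by term to $\wti Q_1 A(z)^{-1}$ and $(z-z_0)\wti P_1 A(z)^{-1}$, using that $\wti P_1 f = A(z_0)^{\text{—}}$... more precisely using $A(z)\wti P_1 = [A(z)-A(z_0)]\wti P_1 = O(z-z_0)$ so that $\wti P_1 = A(z)^{-1}[A(z)-A(z_0)]\wti P_1$ has a pole of order at most $n_0-1$ when hit with $A(z)^{-1}$) shows that $\big(\wti A_1(\cdot)\big)^{-1}$ has a pole of order exactly $n_0 - 1$ at $z_0$. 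Finally, \eqref{3.63a} follows from the pair of identities
\begin{align}
I_{\cH} - A(z) &= [1 + (z-z_0)]\wti P_1 + [I_{\cH} - \wti A_1(z)]\big[\wti Q_1 - (z-z_0)\wti P_1\big], \quad z\in\Omega, \\
I_{\cH} - \wti A_1(z) &= \big[1 + (z-z_0)^{-1}\big]\wti P_1 + [I_{\cH} - A(z)]\big[\wti Q_1 - (z-z_0)^{-1}\wti P_1\big], \quad z\in\Omega\backslash\{z_0\},
\end{align}
together with the fact that $\cF(\cH)$ and $\cB_p(\cH)$ are closed under addition and are two-sided ideals in $\cB(\cH)$ — with the case $z=z_0$ handled separately by evaluating \eqref{3.59A} at $z_0$ to get $I_{\cH} - \wti A_1(z_0) = -\wti A_1(z_0)\wti P_1 + [I_{\cH} - A(z_0)]$ and noting $\wti A_1(z_0)\wti P_1 \in \cF(\cH)$, exactly as in the final paragraph of the proof of Theorem~\ref{t3.4}.
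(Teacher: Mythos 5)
Your proof is organized around a genuinely different strategy than the paper's: you mirror the proof of Theorem~\ref{t3.4} directly, defining $\wti A_1(z) = A(z)\big[\wti Q_1 - (z-z_0)\wti P_1\big]^{-1}$ and re-running each step with the factor on the right. The paper instead proves everything in one stroke by introducing $\ul{\Omega} = \{\ol z : z \in \Omega\}$, $B(\zeta) = A(\ol\zeta)^*$, noting that $\wti Q_1^*$ is a projection onto $\ran(B(\ol{z_0}))$ (since $\ran(\wti P_1)^\perp = \ker(A(z_0))^\perp = \ran(A(z_0)^*)$ by the closed-range property of the Fredholm operator $A(z_0)$), applying Theorem~\ref{t3.4} to $B(\cdot)$, and taking adjoints. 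Your approach is perfectly legitimate and more self-contained, but it costs you two places where the symmetry is not as clean as it looks:

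\textbf{The defect inequality \eqref{3.62a} has a one-line direct proof you missed.} Evaluating \eqref{3.59A} at $z_0$ gives $A(z_0) = \wti A_1(z_0)\wti Q_1$, whence $\ran(A(z_0)) \subseteq \ran(\wti A_1(z_0))$ and therefore $\ran(\wti A_1(z_0))^\perp \subseteq \ran(A(z_0))^\perp$, which is \eqref{3.62a}. This is strictly simpler than the quotient-space argument of \eqref{3.18A}--\eqref{3.18a}, which is why the paper does it this way. Your detour through adjoints and index-zero is correct but unnecessary, and along the way the identification ``$\ran(\wti P_1^*) = \ker(A(z_0)^*)^{\perp\perp}$'' is not quite what you mean — for a (possibly oblique) projection $\wti P_1$, one has $\ran(\wti P_1^*) = \ker(\wti P_1)^\perp = \ran(\wti Q_1)^\perp$ and $\ker(\wti P_1^*) = \ran(\wti P_1)^\perp = \ker(A(z_0))^\perp$, and it is the latter that carries the content.

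\textbf{The pole-order argument has a genuine gap.} Writing $\wti A_1(z)^{-1} = \wti Q_1 A(z)^{-1} - (z-z_0)\wti P_1 A(z)^{-1}$, the second term trivially has pole order $\leq n_0-1$, but the problematic term is $\wti Q_1 A(z)^{-1}$, and your parenthetical ``$\wti P_1 = A(z)^{-1}[A(z)-A(z_0)]\wti P_1$ has a pole of order at most $n_0-1$'' is a true but vacuous statement about the constant operator $\wti P_1$; it does not control $\wti Q_1 A(z)^{-1}$. The uniform-boundedness argument of Theorem~\ref{t3.4} exploits that $Q_1 f \in \ran(A(z_0))$ for every $f$, so $Q_1 f = A(z_0)g$ and $A(z)^{-1}Q_1 f$ can be expanded; here $\wti Q_1$ sits on the other side of $A(z)^{-1}$ and $\ran(\wti Q_1)$ has nothing to do with $\ran(A(z_0))$, so that argument does not transpose. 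A clean direct fix: let $A(z)^{-1} = \sum_{k\geq -n_0}(z-z_0)^k C_k$; equating $(z-z_0)^{-n_0}$ coefficients in $A(z)A(z)^{-1}=I_\cH$ yields $A(z_0)C_{-n_0}=0$, hence $\ran(C_{-n_0}) \subseteq \ker(A(z_0)) = \ker(\wti Q_1)$, so $\wti Q_1 C_{-n_0}=0$ and the leading singularity of $\wti Q_1 A(z)^{-1}$ cancels. Alternatively, just pass everything through $B(\zeta)=A(\ol\zeta)^*$ at the outset, as the paper does — that disposes of the pole order, the defect inequality, and \eqref{3.63a} simultaneously. The remaining items in your write-up — the analyticity and Fredholm property of $\wti A_1$, the index invariance, and the algebraic identities behind \eqref{3.63a} (which I checked expand correctly) — are fine.
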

\begin{proof} 
Set
\begin{equation}\lb{3.70B}
\ul{\Omega} = \{\ol{z} \in \bbC \, |\, z\in \Omega\},
\end{equation}
and define $B:\ul{\Omega}\rightarrow \cB(\cH)$ by
\begin{equation}\lb{3.71B}
B(\zeta) = A(\ol{\zeta})^*,\quad \zeta\in \ul{\Omega}.
\end{equation}
Evidently, $B(\zeta)\in \Phi(\cH)$, $\zeta\in \ul{\Omega}$, and $\ol{z_0}$ is a pole of $B(\,\cdot\,)^{-1}$.  If $\widetilde{P}_1$ is any projection onto $\ker(A(z_0))$ and $\widetilde{Q}_1 = I_{\cH} - \widetilde{P}_1$, then 
\begin{equation}\lb{3.72B}
\widetilde{Q}_1^* = I_{\cH} -\widetilde{P}_1^*
\end{equation}
projects onto (cf., e.g., \cite[p.~155--156, 267]{Ka80})
\begin{equation}\lb{3.73B}
[\ran(\widetilde{P}_1)]^{\perp} = [\ker(A(z_0))]^{\perp} = \ran(A(z_0)^*) = \ran(B(\ol{z_0})),
\end{equation} 
employing the closed range property of $A(z_0)$ and hence of $A(z_0)^*$ due to the Fredholm hypothesis on $A(\cdot)$ in \eqref{3.4}. 
Applying Theorem \ref{t3.4} to $B(\,\cdot\,)$ and $\widetilde{Q}_1^*$, one obtains an $A_1(\,\cdot\,):\ul{\Omega}\rightarrow \Phi(\cH)$ with the properties listed in Theorem \ref{t3.4}.  In particular,
\begin{equation}\lb{3.74B}
B(\zeta) = \big[\widetilde{Q}_1^* - (\zeta-\ol{z_0})\widetilde{P}_1^* \big]A_1(\zeta),\quad \zeta \in \ul{\Omega}.
\end{equation}
Taking adjoints in \eqref{3.74B} results in 
\begin{equation}\lb{3.75B}
A(\ol{\zeta}) = B(\zeta)^* 
= A_1(\zeta)^*\big[\widetilde{Q}_1 - (\ol{\zeta}-z_0)\widetilde{P}_1 \big],\quad \zeta \in \ul{\Omega},
\end{equation}
and since $\zeta \in \ul{\Omega}$ is equivalent to $\zeta=\ol{z}$ for some $z\in \Omega$, one obtains
\begin{equation}\lb{3.76B}
A(z) = A_1(\ol{z})^*\big[\widetilde{Q}_1 - (z - z_0)\widetilde{P_1} \big],\quad z\in \Omega.
\end{equation}
Therefore, \eqref{3.59A} holds with
\begin{equation}\lb{3.77B}
\widetilde{A}_1(z) := A_1(\ol{z})^*,\quad z\in \Omega,
\end{equation}
and $A_1(\,\cdot\,)$ inherits the properties \eqref{3.60a} and \eqref{3.62a} from the corresponding properties \eqref{3.12o} and \eqref{3.11}.  Employing $A(z_0)= \wti A_1(z_0) \wti Q_1$, one concludes that
\begin{equation}\lb{3.78B}
\ran(A(z_0)) \subseteq \ran\big(\wti A_1(z_0)\big), \quad \ran\big(\wti A_1(z_0)\big)^{\bot} 
\subseteq \ran (A(z_0))^{\bot},
\end{equation}
proving \eqref{3.62a}. Invariance of the Fredholm index as recorded in \eqref{3.62b} follows precisely as in the proof of Theorem \ref{t3.5}. The statement about orders of poles of $A(\,\cdot\,)^{-1}$ and $\widetilde{A}_1(\,\cdot\,)^{-1}$ is also clear.  If $z_0$ is a pole of $A(\,\cdot\,)^{-1}$ of order $n_0\in \bbN$, then $\ol{z_0}$ is a pole of $B(\,\cdot\,)^{-1}$ of order $n_0$.  Hence, $\ol{z_0}$ is a pole of $A_1(\,\cdot\,)^{-1}$ of order $n_0-1$ (by Theorem \ref{t3.4}), and $z_0$ is a pole of $\widetilde{A}_1(\,\cdot\,)^{-1}$ of order $n_0-1$.  Finally, \eqref{3.63a} follows from the corresponding statement \eqref{3.14} for $B(\,\cdot\,)$ and $A_1(\,\cdot\,)$ by taking adjoints. 
\end{proof}

Applying Theorem \ref{t3.5} to $B(\,\cdot\,)$ as defined in \eqref{3.71B}, one obtains the following analog of Theorem \ref{t3.5} with the order of factors reversed.

\begin{theorem} \lb{t3.8}
Assume that $A:\Omega \rightarrow \cB(\cH)$ satisfies Hypothesis \ref{h3.2}, and let $z_0\in \Omega$ be a pole of $A(\cdot)^{-1}$ of order $n_0\in \bbN$.  Then there exist projections $\wti P_j$ and $\wti Q_j = I_{\cH} - \wti P_j$ in $\cH$ such that with $\wti p_j = \dim(\ran(\wti P_j))$, $1\leq j \leq n_0$, one infers that
\begin{align}
A(z) &= \wti A_{n_0}(z)\big[\wti Q_{n_0} - (z-z_0)\wti P_{n_0}\big]\cdots \big[\wti Q_{2} - (z-z_0)\wti P_{2}\big]\big[\wti Q_{1} - (z-z_0)\wti P_{1}\big],\no\\
&\hspace*{9cm}z\in \Omega,\lb{3.74a}
\end{align}
and
\begin{equation}\lb{3.75a}
1\leq \wti p_{n_0} \leq \wti p_{n_0-1} \leq \cdots \leq \wti p_2 \leq \wti p_1 < \infty,
\end{equation}
where
\begin{align} 
& \text{$\wti A_{n_0}(\cdot)$ is analytic on $\Omega$,}      \lb{3.60aaa} \\
& \wti A_{n_0}(z) \in \Phi(\cH), \quad z\in\Omega,    \lb{3.61aaa} \\ 
& \ind\big(\wti A(z)\big) = \ind\big(\wti A_{n_0}(z)\big) = 0, 
\quad z \in \Omega, \; |z - z_0| \, \text{ sufficiently small,}     \lb{3.61b} \\
& \big[\wti A_{n_0}(z)\big]^{-1} \in \cB(\cH), \quad z \in \Omega, \; |z - z_0| \, 
\text{ sufficiently small.}    \lb{3.62aaa}
\end{align}
In addition,
\begin{equation}\lb{3.79a}
\wti p_1 = \dim(\ker (A(z_0))) = m_g(0;A(z_0)),
\end{equation}
and, hence, 
\begin{equation}\lb{3.80a}
\wti \nu(z_0;A(\cdot)) = \sum_{j=1}^{n_0} \wti p_j \geq m_g(0;A(z_0)),
\quad \wti \nu(z_0;A(\cdot))\geq n_0.
\end{equation}
Finally,
\begin{align}
& [I_{\cH} - A(\cdot)] \in \cF(\cH) \, \text{ $($resp., $\cB_p(\cH)$ for some $1 \leq p \leq \infty$$)$}  
\lb{3.63aaa} \\ 
& \quad \text{if and only if } \, 
\big[I_{\cH} - \wti A_{n_0}(\cdot)\big] \in \cF(\cH) \, \text{ $($resp., $\cB_p(\cH)$ for some $1 \leq p \leq \infty$$)$.}   \no 
\end{align}
\end{theorem}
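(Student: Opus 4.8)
The plan is to deduce Theorem \ref{t3.8} from Theorem \ref{t3.5} by the same adjoint/reflection trick already used in the proof of Theorem \ref{t3.7}. First I would introduce the reflected domain $\ul{\Omega} = \{\ol z \in \bbC \, | \, z \in \Omega\}$ and the function $B : \ul{\Omega} \to \cB(\cH)$, $B(\zeta) = A(\ol\zeta)^*$, noting as in the proof of Theorem \ref{t3.7} that $B(\cdot)$ satisfies Hypothesis \ref{h3.2} on $\ul\Omega$ (analyticity in $\zeta$ since conjugation composed with analyticity composed with the antilinear adjoint yields an analytic $\cB(\cH)$-valued map, and $B(\zeta) \in \Phi(\cH)$ because $A(\ol\zeta) \in \Phi(\cH)$), and that $\ol{z_0}$ is a pole of $B(\cdot)^{-1}$ of the same order $n_0$ as $z_0$ is a pole of $A(\cdot)^{-1}$, since $B(\zeta)^{-1} = [A(\ol\zeta)^{-1}]^*$ near $\ol{z_0}$.

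Next I would apply Theorem \ref{t3.5} to $B(\cdot)$ on $\ul\Omega$ at the pole $\ol{z_0}$, obtaining projections $R_j$ and $S_j = I_{\cH} - R_j$ with $r_j = \dim(\ran(R_j))$, $1 \le j \le n_0$, such that
\begin{equation}
B(\zeta) = [S_1 - (\zeta - \ol{z_0}) R_1] [S_2 - (\zeta - \ol{z_0}) R_2] \cdots [S_{n_0} - (\zeta - \ol{z_0}) R_{n_0}] B_{n_0}(\zeta), \quad \zeta \in \ul\Omega,
\end{equation}
together with the chain $1 \le r_{n_0} \le \cdots \le r_1 < \infty$, analyticity of $B_{n_0}(\cdot)$ on $\ul\Omega$, $B_{n_0}(\zeta) \in \Phi(\cH)$, the index relations, bounded invertibility of $B_{n_0}(\cdot)$ near $\ol{z_0}$, the identity $r_1 = m_g(0; B(\ol{z_0}))$, and the Schatten statement \eqref{3.42}. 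Taking adjoints of this factorization and using $(CD)^* = D^* C^*$ reverses the order of the factors; since each $[S_j - (\zeta - \ol{z_0}) R_j]^* = S_j^* - (\ol{\zeta - \ol{z_0}}\,) R_j^* = S_j^* - (\ol\zeta - z_0) R_j^*$ and $R_j^*$ is again a (not necessarily orthogonal) projection with $\dim(\ran(R_j^*)) = \dim(\ran(R_j)) = r_j$ by \eqref{3.19AA}-type reasoning, I would set $\wti P_j := R_{n_0 + 1 - j}^*$, $\wti Q_j := I_{\cH} - \wti P_j = S_{n_0+1-j}^*$, and $\wti A_{n_0}(z) := B_{n_0}(\ol z)^*$, and substitute $\zeta = \ol z$ to land exactly at \eqref{3.74a}. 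The inequalities \eqref{3.75a} follow from $1 \le r_{n_0} \le \cdots \le r_1 < \infty$ after reindexing (note $\wti p_j = r_{n_0+1-j}$, so the chain reads $\wti p_1 = r_{n_0} \le \cdots \le \wti p_{n_0} = r_1$ — here I must be careful about the direction, so I would instead set $\wti P_j := R_j^*$ with the factors appearing in reversed positional order, giving $\wti p_j = r_j$ and the chain \eqref{3.75a} directly); analyticity \eqref{3.60aaa}, the Fredholm property \eqref{3.61aaa}, the index identities \eqref{3.61b}, and bounded invertibility near $z_0$ in \eqref{3.62aaa} transfer from the corresponding properties of $B_{n_0}(\cdot)$ by taking adjoints, using that $T \in \Phi(\cH) \iff T^* \in \Phi(\cH)$ with $\ind(T^*) = -\ind(T)$, so $\ind = 0$ is preserved. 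Equation \eqref{3.79a} follows from $\wti p_1 = r_1 = m_g(0; B(\ol{z_0})) = \dim(\ker(A(z_0)^*)) = \dim(\ker(A(z_0)))$, the last equality holding since $\ind(A(z_0)) = 0$; then \eqref{3.80a} is immediate from $\wti\nu(z_0; A(\cdot)) = \sum_j \wti p_j = \sum_j r_j = \nu(\ol{z_0}; B(\cdot))$ and \eqref{3.32}. Finally \eqref{3.63aaa} follows from \eqref{3.42} applied to $B(\cdot)$ and $B_{n_0}(\cdot)$, since the Schatten ideals $\cF(\cH)$ and $\cB_p(\cH)$ are invariant under the adjoint.

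The main obstacle — really the only delicate point — is bookkeeping the correspondence between the projections and the index labels so that the monotonicity \eqref{3.75a} comes out in the stated direction after the order reversal induced by taking adjoints; one must decide consistently whether $\wti P_j$ should be $R_j^*$ or $R_{n_0+1-j}^*$ so that both the positional order in \eqref{3.74a} and the inequality chain match Theorem \ref{t3.5}. Everything else is a routine transfer of properties through the antilinear bijection $T \mapsto T^*$, exactly parallel to the proof of Theorem \ref{t3.7}, and indeed the proof can be stated in a couple of sentences once the reflected function $B(\cdot)$ and the reindexed projections are set up.
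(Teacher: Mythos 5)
Your proposal is correct and coincides with the paper's own argument: define $B(\zeta)=A(\ol\zeta)^*$ on the reflected domain $\ul\Omega$, apply Theorem \ref{t3.5} to $B(\cdot)$ at $\ol{z_0}$, take adjoints to reverse the order of the factors, set $\wti P_j=R_j^*$ and $\wti A_{n_0}(z)=B_{n_0}(\ol z)^*$, and transfer the remaining properties through $T\mapsto T^*$. Your self-correction on the indexing (keeping $\wti P_j := R_j^*$ rather than $R_{n_0+1-j}^*$, so that the monotonicity chain \eqref{3.75a} and the identification $\wti p_1 = m_g(0;A(z_0))$ both come out in the stated form) is precisely the convention the paper uses.
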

\begin{proof}
Applying Theorem \ref{t3.4} to $B(\,\cdot\,)$ defined in \eqref{3.71B}, one obtains the existence of $A_{n_0}(\,\cdot\,)$ with the properties in Theorem \ref{t3.4} and projections $P_j$ and 
$Q_j = I_{\cH} - P_j$ in $\cH$ such that, with $\dim(\ran(P_j)) < \infty$, $1 \leq j \leq n_0$, the factorization
\begin{equation}
B(\zeta) = [Q_1 - (\zeta - \ol{z_0}) P_1] [Q_2 - (\zeta- \ol{z_0}) P_2] \cdots [Q_{n_0} - (\zeta - \ol{z_0}) P_{n_0}] A_{n_0}(\zeta), 
\quad \zeta \in \ul{\Omega},    \lb{3.26B}
\end{equation}
holds.
Taking adjoints in \eqref{3.26B} implies 
\begin{align}
A(\ol{\zeta}) &= B(\zeta)^*     \no \\
&= A_{n_0}(\zeta)^* [Q_{n_0}^* - (\ol{\zeta} - z_0) P_{n_0}^*]\cdots [Q_2^* - (\ol{\zeta}- z_0) P_2^*][Q_1^* - (\ol{\zeta} - z_0) P_1^*],      \no \\   
& \hspace*{9cm} \zeta\in \ul{\Omega}.\lb{3.88B}
\end{align}
Writing $\ol{\zeta}=z\in \Omega$, \eqref{3.88B} takes the form
\begin{equation}
A(z) = A_{n_0}(\ol{z})^* [Q_{n_0}^* - (z - z_0) P_{n_0}^*]\cdots [Q_2^* - (z- z_0) P_2^*][Q_1^* - (z - z_0) P_1^*],\quad z\in \Omega.\lb{3.89B}
\end{equation}
Defining $\widetilde{P_j}=P_j^*$, $1\leq j\leq n_0$, and 
\begin{equation}
\widetilde{A}_{n_0}(z) = A_{n_0}(\ol{z})^*,\quad z\in \Omega,
\end{equation}
yields the factorization \eqref{3.74a} with 
$\widetilde{p}_j = \dim(\ran(P_j)^*) = \dim(\ran(P_j))$, $1\leq j\leq n_0$.  The properties \eqref{3.60aaa}--\eqref{3.63aaa} follow immediately from the corresponding properties of 
$A_{n_0}(\,\cdot\,)$ and $B(\,\cdot\,)$.  We omit further details at this point, and only note that 
\begin{equation}
\widetilde{p}_1 = \dim(\ker(B(\ol{z_0}))) = \dim(\ker(A(z_0))).
\end{equation}
\end{proof}

\begin{remark} \lb{r3.9} 
Since $A(\cdot), A_1(\cdot), \dots , A_{n_0}(\cdot)$ in Theorems \ref{t3.4} and \ref{t3.5} 
all have index zero at $z_0 \in \Omega$, the dimensions $p_j$ in Theorem \ref{t3.5} and 
$\wti p_j$ in Theorem \ref{t3.8} satisfy $p_j = \wti p_j$, $1 \leq j \leq n_0$, repeatedly 
applying equalities of the type $p_1 = \dim(\ker(A(z_0)^*)) = \dim(\ker(A(z_0))) = \wti p_1$, etc. 
In particular, $\nu(z; A(\cdot)) = \wti \nu(z_0; A(\cdot))$. 
\end{remark}

\section{Algebraic Multiplicities of Zeros of Operator-Valued Functions: The Analytic Case}
\lb{s4}

In this section we consider algebraic multiplicities of zeros of analytic operator-valued functions 
and then study applications to the Birman--Schwinger operator $K(\cdot)$ in 
\eqref{2.4} in search of 
the analog of Theorem \ref{t2.7} for algebraic multiplicities.

The pertinent facts in this context can be found in \cite{GS71} (see 
also, \cite[Sects.\ XI.8, XI.9]{GGK90}, \cite[Ch.\ 4]{GL09}, and \cite[Sect.\ 11]{Ma88}). Let 
$\Omega \subseteq \bbC$ be open and connected, $z_0 \in \Omega$,  suppose that 
$A:\Omega \to \cB(\cH)$ is analytic on $\Omega$, and that 
$A(z_0)$ is a Fredholm operator (i.e., $\dim(\ker(A(z_0)))<\infty$, 
$\ran(A(z_0))$ is closed in $\cH$, and $\dim(\ker(A(z_0)^*))<\infty$) 
of index zero, that is, 
\begin{equation} 
\ind(A(z_0)) = \dim(\ker(A(z_0))) - \dim(\ker(A(z_0)^*)) = 0. 
\end{equation}
By \cite{GS71} (or by 
\cite[Theorem\ XI.8.1]{GGK90}) there exists a neighborhood 
$\cN(z_0) \subset \Omega$ and analytic and boundedly invertible 
operator-valued functions $E_j: \Omega \to \cB(\cH)$, $j=1,2$, such that 
\begin{equation}
A(z) = E_1(z) \wti A(z) E_2(z), \quad z \in \cN(z_0), 
\end{equation}
where $\wti A(\cdot)$ is of the particular form
\begin{equation}
\wti A(z) = \wti P_0 + \sum_{j=1}^r (z - z_0)^{n_j} \wti P_j, \quad z \in \cN(z_0),   \lb{3.51} 
\end{equation}
with 
\begin{align} 
& \wti P_k, \; 0 \leq k \leq r, \, \text{ mutually disjoint projections in $\cH$,}\no    \\ 
& \big[I_{\cH} - \wti P_0 \big] \in \cF(\cH), \quad 
\dim\big(\ran\big(\wti P_j\big)\big) = 1, \quad 1 \leq j \leq r,    \no \\ 
& n_1 \leq n_2 \leq \dots \leq n_r, \quad n_j \in \bbN, \; 1 \leq j \leq r.     \lb{3.54}
\end{align} 
Moreover (cf.\ \cite[Sect.\ XI.8]{GGK90}, \cite{GS71}), the integers $n_j$, $1\leq j \leq r$, are 
uniquely determined by $A(\cdot)$, and the geometric multiplicity 
$m_g(0; A(z_0))$ of $A(z_0)$ is given by
\begin{equation}
m_g(0; A(z_0)) = \dim(\ker(A(z_0))) 
= \dim\big(\ran\big(I_{\cH} - \wti P_0\big)\big). 
\end{equation}

\begin{definition} \lb{d3.7}
Let $\Omega \subseteq \bbC$ be open and connected, $z_0 \in \Omega$,  suppose that 
$A:\Omega \to \cB(\cH)$ is analytic on $\Omega$. Then $z_0$ is called a {\it zero of finite-type 
of $A(\cdot)$} if $A(z_0)$ is a Fredholm operator, $\ker(A(z_0)) \neq \{0\}$, and $A(\cdot)$ is 
boundedly invertible on $D(z_0; \varepsilon_0) \backslash \{z_0\}$, for sufficiently small 
$\varepsilon_0 > 0$. 
\end{definition}

Our choice of notation calling $z_0$ a {\it zero} of $A(\cdot)$ is close to Howland's notation 
of a {\it weak zero} of an operator-valued function in \cite{Ho71}. In Gohberg and Sigal \cite{GS71} 
(and in related literature in the former Soviet Union) the notion of a {\it characteristic value} 
(or {\it eigenvalue}) is used instead in this connection. 

If $z_0$ is a zero of finite-type of $A(\cdot)$, then (since $A(\cdot)$ is 
boundedly invertible on $D(z_0; \varepsilon_0) \backslash \{z_0\}$, for sufficiently small 
$\varepsilon_0 > 0$), 
\begin{equation}
\ind(A(z_0)) = 0,  
\end{equation}
and one has 
\begin{equation}
\sum_{k=0}^r \wti P_k = I_{\cH}, 
\end{equation} 
and 
\begin{equation} 
\text{$A(\cdot)^{-1}$ is finitely meromorphic at $z_0$}  
\end{equation} 
(cf.\ \cite[Sect.\ XI.9]{GGK90}, \cite{GS71} for these facts).

\begin{definition} \lb{d3.8}
Let $\Omega \subseteq \bbC$ be open and connected, $z_0 \in \Omega$,  suppose that 
$A:\Omega \to \cB(\cH)$ is analytic on $\Omega$, and assume that $z_0$ is a zero of finite-type 
of $A(\cdot)$. Then $m_a(z_0; A(\cdot))$, the {\it algebraic multiplicity of the zero of $A(\cdot)$ 
at $z_0$}, is defined to be (cf.\ \cite[Sect.\ XI.9]{GGK90})
\begin{equation}
m_a(z_0; A(\cdot)) = \sum_{j=1}^r n_j,  
\end{equation} 
with $n_j$, $1 \leq j \leq r$, introduced in \eqref{3.51}--\eqref{3.54}. 
\end{definition}

Under the assumptions in Definition \ref{d3.8}, one also has an extension of the argument principle for scalar analytic functions to the operator-valued case (cf.\ 
\cite[Theorem\ XI.9.1]{GGK90}, \cite{GS71}) in the form
\begin{align}
\begin{split}
m_a(z_0; A(\cdot)) &= {\tr}_{\cH}\bigg(\f{1}{2\pi i} 
\ointctrclockwise_{C(z_0; \varepsilon)} d\zeta \, 
A'(\zeta) A(\zeta)^{-1}\bigg)     \lb{3.53} \\
& = {\tr}_{\cH}\bigg(\f{1}{2\pi i} 
\ointctrclockwise_{C(z_0; \varepsilon)} d\zeta \, 
A(\zeta)^{-1} A'(\zeta)\bigg), \quad 0 < \varepsilon < \varepsilon_0.
\end{split}
\end{align}  
Since $A(\cdot)^{-1}$ is finitely meromorphic, the integral in \eqref{3.53} 
is a finite rank operator (the analytic and non-finite-rank part under the integral in \eqref{3.53} yielding a zero contribution when integrated over 
$C(z_0; \varepsilon)$) and hence the trace in \eqref{3.53} is 
well-defined. 

Next, recalling our notation of the principal part of an operator-valued meromorphic function in 
\eqref{3.2}, one also obtains 
\begin{align}
\begin{split}
m_a(z_0; A(\cdot)) &= {\tr}_{\cH}\bigg(\f{1}{2\pi i} 
\ointctrclockwise_{C(z_0; \varepsilon)} d\zeta \, 
{\rm pp}_{z_0} \, \big\{A'(\zeta) A(\zeta)^{-1}\big\}\bigg)    \\ 
&= {\tr}_{\cH}\bigg(\f{1}{2\pi i} 
\ointctrclockwise_{C(z_0; \varepsilon)} d\zeta \, 
{\rm pp}_{z_0} \, \big\{A(\zeta)^{-1}A'(\zeta) \big\}\bigg), 
\quad 0 < \varepsilon < \varepsilon_0.       \lb{3.56}
\end{split}
\end{align}  

Moreover, we mention the following useful result.

\begin{lemma} [{\cite[Lemma\ 9.3]{GGK90}}, {\cite[Proposition\ 4.2.2]{GL09}}] 
\lb{l3.9}
Let $\Omega\subseteq\bbC$ be open and connected and $M_j(\cdot)$, $j=1,2$, be 
finitely meromorphic at $z_0 \in \Omega$. Then 
$M_1(\cdot) M_2(\cdot)$ and $M_2(\cdot) M_1(\cdot)$ are finitely meromorphic at 
$z_0 \in \Omega$, and for $0 < \varepsilon_0$ sufficiently small,
\begin{align}
& \ointctrclockwise_{C(z_0;\varepsilon)} d \zeta \, M_1(\zeta) M_2(\zeta), 
\; \ointctrclockwise_{C(z_0;\varepsilon)} d \zeta \, M_2(\zeta) M_1(\zeta) 
\in \cF(\cH),    \lb{3.57} \\
& {\tr}_{\cH} \bigg(\ointctrclockwise_{C(z_0;\varepsilon)} d \zeta \, 
M_1(\zeta) M_2(\zeta)\bigg) 
= {\tr}_{\cH} \bigg(\ointctrclockwise_{C(z_0;\varepsilon)} d \zeta \, 
M_2(\zeta) M_1(\zeta)\bigg),    \lb{3.58} \\
& {\tr}_{\cH} \big({\rm pp}_{z_0} \, \{M_1(z) M_2(z)\}\big) = 
{\tr}_{\cH} \big({\rm pp}_{z_0} \, \{M_2(z) M_1(z)\}\big), \quad 
0 < |z-z_0| < \varepsilon_0.     \lb{3.59}
\end{align}
\end{lemma}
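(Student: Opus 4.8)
The plan is to reduce everything to the case of scalar meromorphic functions by representing each $M_j(\cdot)$ near $z_0$ as the sum of its (finite-rank) principal part and an analytic remainder, and then to exploit the cyclicity of the trace together with the observation that analytic terms integrate to zero over a small circle. Write $M_j(z) = S_j(z) + R_j(z)$ for $z$ in a punctured disk around $z_0$, where $S_j(z) = {\rm pp}_{z_0}\{M_j(z)\}$ is a finite sum $\sum_{k=1}^{N_j}(z-z_0)^{-k}M_{-k}^{(j)}(z_0)$ with each coefficient in $\cF(\cH)$, and $R_j(\cdot)$ is analytic on a full neighborhood $\cN(z_0)$ of $z_0$. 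First I would check that $M_1(\cdot)M_2(\cdot)$ is finitely meromorphic at $z_0$: expanding the product gives $S_1 S_2 + S_1 R_2 + R_1 S_2 + R_1 R_2$; the term $R_1 R_2$ is analytic and thus contributes no principal part, while each of the other three terms is a Laurent polynomial in $(z-z_0)^{-1}$ with coefficients built from products of a finite-rank operator with a bounded operator, hence finite-rank (using that $\cF(\cH)$ is a two-sided ideal in $\cB(\cH)$). Taking the negative-index part of this expansion shows ${\rm pp}_{z_0}\{M_1(z)M_2(z)\} \in \cF(\cH)$ for the relevant coefficients, which is exactly the finite-meromorphy claim; the same argument with indices swapped handles $M_2(\cdot)M_1(\cdot)$.

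Next I would establish \eqref{3.57}. For $0 < \varepsilon < \varepsilon_0$ with $C(z_0;\varepsilon)$ enclosing no other singularities, the contour integral of the analytic part $R_1 R_2$ vanishes by Cauchy's theorem, so $\ointctrclockwise_{C(z_0;\varepsilon)} d\zeta\, M_1(\zeta)M_2(\zeta) = \ointctrclockwise_{C(z_0;\varepsilon)} d\zeta\, \big[S_1(\zeta)S_2(\zeta) + S_1(\zeta)R_2(\zeta) + R_1(\zeta)S_2(\zeta)\big]$, and each summand on the right is a finite-rank-operator-valued function integrated over a compact contour, hence the integral lies in $\cF(\cH)$ (the finite-rank operators form a subspace, and a norm-convergent Riemann-sum limit of finite-rank operators whose ranks are uniformly bounded by $\sum_j \sum_k \rank(M_{-k}^{(j)}(z_0))$ stays finite-rank, or more simply one integrates the finitely many Laurent coefficients separately). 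This proves the first membership in \eqref{3.57}, and the second follows identically.

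For the trace identity \eqref{3.58}, I would integrate term by term. The key point is the elementary fact that for $S \in \cF(\cH)$ and $B \in \cB(\cH)$ one has ${\tr}_{\cH}(SB) = {\tr}_{\cH}(BS)$; applying this to each Laurent coefficient of $S_1 S_2$ versus $S_2 S_1$, of $S_1 R_2(\zeta)$ versus $R_2(\zeta) S_1$, and of $R_1(\zeta) S_2$ versus $S_2 R_1(\zeta)$, and then integrating over $C(z_0;\varepsilon)$ (where the trace and the contour integral commute, since the integrand is a continuous $\cF(\cH)$-valued function and $\tr_\cH$ is continuous on $\cF(\cH)$ in the operator norm restricted to the finite-dimensional range), gives ${\tr}_{\cH}\big(\ointctrclockwise M_1 M_2\big) = {\tr}_{\cH}\big(\ointctrclockwise M_2 M_1\big)$; the analytic parts $R_1 R_2$ and $R_2 R_1$ integrate to zero and so drop out harmlessly. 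Finally, \eqref{3.59} is immediate from \eqref{3.58} applied with $\varepsilon$ varying: since $\tfrac{1}{2\pi i}\ointctrclockwise_{C(z_0;\varepsilon)}d\zeta\,(\zeta-z_0)^{-1} = 1$ and all other powers integrate to zero, $\tfrac{1}{2\pi i}\ointctrclockwise_{C(z_0;\varepsilon)}d\zeta\,M_1(\zeta)M_2(\zeta)$ equals the residue, i.e.\ the $(z-z_0)^{-1}$-coefficient of ${\rm pp}_{z_0}\{M_1(z)M_2(z)\}$; but for the trace one can equally use any single contour integral, and more directly, writing out ${\rm pp}_{z_0}\{M_1 M_2\}$ and ${\rm pp}_{z_0}\{M_2 M_1\}$ as the negative-index parts of the respective product expansions and matching coefficients via $\tr_\cH(SB) = \tr_\cH(BS)$ coefficient-by-coefficient yields equality of the traces of the full principal parts, which is \eqref{3.59}.

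The main obstacle I anticipate is purely bookkeeping rather than conceptual: one must be careful that the product of the two principal parts $S_1(\zeta)S_2(\zeta)$ can have a pole of order up to $N_1 + N_2$, so the ``finitely meromorphic'' conclusion is not about preserving pole order but only about finite rank of all principal coefficients — and one must verify that no cancellation subtlety invalidates term-by-term trace manipulation (it does not, since $\tr_\cH$ is linear and each coefficient operator is genuinely finite-rank, so $\tr_\cH(SB)=\tr_\cH(BS)$ applies without trace-class caveats). A secondary point requiring a line of justification is the interchange of $\tr_\cH$ with the contour integral, which is legitimate because on a fixed small circle the integrand takes values in a fixed finite-dimensional subspace (the span of the ranges of the finitely many coefficient operators), on which the trace is a bounded linear functional.
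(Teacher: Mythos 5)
The paper cites Lemma~\ref{l3.9} to \cite[Lemma~9.3]{GGK90} and \cite[Proposition~4.2.2]{GL09} without reproducing a proof, so there is no in-paper argument to compare against. Your strategy --- splitting each $M_j(\cdot)$ near $z_0$ into its finite-rank principal part $S_j$ plus an analytic remainder $R_j$, expanding the products, and invoking $\tr_{\cH}(SB)=\tr_{\cH}(BS)$ for $S\in\cF(\cH)$, $B\in\cB(\cH)$ --- is the standard route to this result and is essentially correct.

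One point in your justification of the interchange of $\tr_{\cH}$ with the contour integral does not survive scrutiny: you assert that the integrand takes values in a \emph{fixed} finite-dimensional subspace of $\cB(\cH)$, but this fails for the term $R_1(\zeta)S_2(\zeta)$, whose range $R_1(\zeta)\big(\ran S_2\big)$ genuinely moves with $\zeta$ (it is the \emph{ranks} that stay bounded, not the ranges, and the subspace of operators of rank $\le r$ is not a linear subspace). The interchange is nonetheless legitimate: since $\rank(R_1(\zeta)S_2(\zeta))$ is uniformly bounded by $\sum_k \rank\big(M^{(2)}_{-k}(z_0)\big)$, one has $\|R_1(\zeta)S_2(\zeta)\|_{\cB_1(\cH)} \le \big(\sum_k \rank M^{(2)}_{-k}(z_0)\big)\|R_1(\zeta)S_2(\zeta)\|_{\cB(\cH)}$ (similarly for the other mixed term), so the integrand is a norm-continuous $\cB_1(\cH)$-valued function on the compact circle and $\tr_{\cH}$, being bounded on $\cB_1(\cH)$, passes inside. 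Cleaner still, and this is where your proposal in fact lands at the end, is to avoid the interchange altogether: the contour integral picks out the residue, $\ointctrclockwise_{C(z_0;\varepsilon)} d\zeta\, M_1(\zeta)M_2(\zeta) = 2\pi i \sum_{k+m=-1} M^{(1)}_k(z_0) M^{(2)}_m(z_0)$, a finite sum in which each summand has at least one finite-rank factor; term-by-term cyclicity against the corresponding sum for $M_2 M_1$ gives \eqref{3.58} directly, and the same coefficient matching at each power $(z-z_0)^{-n}$, $n\ge 1$, gives \eqref{3.59}. The remainder of the proposal --- finite-meromorphy of the products via the ideal property of $\cF(\cH)$, and the rank bound on the Riemann-sum limit for \eqref{3.57} --- is sound.
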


\medskip

One notes that $m_a(z_0; A(\cdot))$ must be distinguished from $m_a(0; A(z_0))$. However, in 
the special case where $A(z) = A - z I_{\cH}$, $z \in \Omega$, one has the following fact:

\begin{remark} \lb{r3.10}
Let $\Omega \subseteq \bbC$ be open and connected, $z_0 \in \Omega$, and suppose that the particular 
function $A(z) = A - z I_{\cH}$, $z \in \Omega$, with $A \in \cB(\cH)$, satisfies the conditions of 
Definition \ref{d3.8}. Then $m_a(z_0; A(\cdot))$, the {\it algebraic multiplicity of the 
zero of $A(\cdot)$ at $z_0$} equals the algebraic multiplicity $m_a(z_0;A)$ of the eigenvalue $z_0$ 
of $A$, 
\begin{equation}
m_a(z_0; A(\cdot)) = m_a(z_0; A),   
\end{equation} 
since the right-hand side of \eqref{3.53} equals
\begin{equation}
{\tr}_{\cH}\bigg(\f{-1}{2\pi i} 
\ointctrclockwise_{C(z_0; \varepsilon)} d\zeta \, 
(A - \zeta I_{\cH})^{-1}\bigg) = {\tr}_{\cH}(P(z_0;A)) = m_a(z_0; A), \quad 0 < \varepsilon < \varepsilon_0, 
\end{equation}
with $P(z_0;A)$ the Riesz projection associated with $A$ and its isolated eigenvalue $z_0$. 
\end{remark}

Following standard practice, we now introduce the {\it discrete spectrum} of a densely defined, closed, linear 
operator $T$ in $\cH$ by
\begin{equation}
\sigma_d(T) = \{z \in \sigma_p(T) \,|\, \text{$z$ is an isolated point of $\sigma(T)$, with $m_a(z;T)<\infty$}\}, 
\lb{3.59a}
\end{equation}
and denote its {\it essential spectrum} by  
\begin{equation}
\sigma_{ess}(T) = \bbC \backslash \sigma_d(T).    \lb{3.59b} 
\end{equation}
Since $T$ is not assumed to be self-adjoint, one notes that several inequivalent definitions of 
$\sigma_{ess}(T)$ are in use in the literature (cf., e.g., \cite[Ch.\ IX]{EE89} for a detailed discussion), but 
in this paper we will only use the one in \eqref{3.59b}. 

Given this background material, we now apply it to reprove and slightly extend a multiplicity 
result due to Latushkin and Sukhtyaev \cite{LS10}.  

\begin{theorem} \lb{t3.11}
Assume Hypothesis \ref{h2.5} and suppose that 
$z_0 \in \rho(H_0) \cap \sigma(H)$ with $D(z_0; \varepsilon_0) \cap 
\sigma(H) = \{z_0\}$ for some $\varepsilon_0 >0$. Then $z_0$ is a discrete eigenvalue of $H$, 
\begin{equation}
z_0 \in \sigma_d(H).     \lb{3.59c}
\end{equation}
In addition, $z_0$ is a zero of finite-type of $I_{\cK} - K(\cdot)$ and 
\begin{equation}
m_a(z_0; H) = m_a(z_0; I_{\cK} - K(\cdot))
= \nu(z_0; I_{\cK} - K(\cdot)).      \lb{3.60}
\end{equation}
\end{theorem}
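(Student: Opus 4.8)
The plan is to combine the Birman--Schwinger correspondence from Theorem~\ref{t2.7} with the factorization machinery of Section~\ref{s3} applied to $A(\cdot) = I_{\cK} - K(\cdot)$. First I would establish that $z_0 \in \sigma_d(H)$. Since $z_0 \in \rho(H_0)$ and $D(z_0;\varepsilon_0) \cap \sigma(H) = \{z_0\}$, the point $z_0$ is isolated in $\sigma(H)$; by \eqref{2.33a} this forces $1 \in \sigma_p(K(z_0))$ while $1 \in \rho(K(z))$ for $z \in D(z_0;\varepsilon_0)\backslash\{z_0\}$ (shrinking $\varepsilon_0$ if necessary so that $D(z_0;\varepsilon_0) \subseteq \rho(H_0)$). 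Thus $A(\cdot) = I_{\cK} - K(\cdot)$ is analytic near $z_0$, takes values in $\Phi(\cK)$ with index zero by Hypothesis~\ref{h2.5} and Remark~\ref{r2.6}, has $\ker(A(z_0)) \neq \{0\}$ by \eqref{2.33}, and is boundedly invertible on the punctured disk; hence $z_0$ is a zero of finite-type of $A(\cdot)$ in the sense of Definition~\ref{d3.7}. In particular $A(\cdot)^{-1}$ is finitely meromorphic at $z_0$ with a pole there (a genuine pole since $z_0 \in \sigma(H)$), so the resolvent formula \eqref{2.13} together with \eqref{2.5} and \eqref{2.15} shows $R(\cdot)$ is finitely meromorphic near $z_0$ with the Riesz projection $P(z_0;H)$ of finite rank; this yields $m_a(z_0;H) < \infty$, i.e.\ \eqref{3.59c}.

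Next I would prove the middle equality $m_a(z_0;H) = m_a(z_0; I_{\cK} - K(\cdot))$. The idea is to compare traces of contour integrals of logarithmic-derivative-type expressions. On one side, by Remark~\ref{r3.10} (applied to $H$) one has
\begin{equation*}
m_a(z_0;H) = {\tr}_{\cH}\bigg(\f{-1}{2\pi i}\ointctrclockwise_{C(z_0;\varepsilon)} d\zeta\, R(\zeta)\bigg) = {\tr}_{\cH}(P(z_0;H)).
\end{equation*}
On the other side, by \eqref{3.53}, $m_a(z_0; I_{\cK}-K(\cdot)) = {\tr}_{\cK}\big(\frac{1}{2\pi i}\ointctrclockwise_{C(z_0;\varepsilon)} d\zeta\, [I_{\cK}-K(\zeta)]^{-1}(-K'(\zeta))\big)$. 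Using \eqref{2.13} to write $R(\zeta) = R_0(\zeta) - \overline{R_0(\zeta)V_2^*}[I_{\cK}-K(\zeta)]^{-1}V_1 R_0(\zeta)$, and noting $R_0(\cdot)$ is analytic near $z_0$ (so contributes nothing to the residue), the residue of $R(\cdot)$ comes entirely from the principal part of $\overline{R_0(\cdot)V_2^*}[I_{\cK}-K(\cdot)]^{-1}V_1 R_0(\cdot)$. I would then use the cyclicity of the trace in the form of Lemma~\ref{l3.9}, together with the differentiated resolvent identity $K'(\zeta) = \frac{d}{d\zeta}\big(-\overline{V_1 R_0(\zeta)V_2^*}\big)$ (from \eqref{2.10}, \eqref{2.11}, which gives $K'(\zeta) = -V_1 R_0(\zeta)^2 V_2^*$ suitably interpreted via the bounded extensions in \eqref{2.5}), to identify the two finite-rank residue operators as having equal trace. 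Concretely, one manipulates $\overline{R_0(\zeta)V_2^*}[I_{\cK}-K(\zeta)]^{-1}V_1 R_0(\zeta)$ under the trace and contour integral, moving the outer factors cyclically to produce $[I_{\cK}-K(\zeta)]^{-1} V_1 R_0(\zeta)^2 V_2^*$ up to analytic-hence-irrelevant corrections, which is precisely $-[I_{\cK}-K(\zeta)]^{-1}K'(\zeta)$ modulo analytic terms.

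Finally, the equality $m_a(z_0; I_{\cK}-K(\cdot)) = \nu(z_0; I_{\cK}-K(\cdot))$ follows from the factorization theory: applying Theorem~\ref{t3.5} to $A(\cdot) = I_{\cK}-K(\cdot)$ (legitimate since $A(\cdot)$ is $\Phi(\cK)$-valued, analytic, and $A(\cdot)^{-1}$ has a pole at $z_0$) gives $A(z) = [Q_1 - (z-z_0)P_1]\cdots[Q_{n_0} - (z-z_0)P_{n_0}]A_{n_0}(z)$ with $A_{n_0}(\cdot)$ analytic and boundedly invertible near $z_0$. Inserting this into the block form \eqref{3.19} and computing ${\det}_{\ran(P(z_0))}(F(\cdot))$ as in the proof of Theorem~\ref{t3.5}, one gets $m(z_0; {\det}_{\ran(P(z_0))}(F(\cdot))) = \sum_{j=1}^{n_0} p_j = \nu(z_0; A(\cdot))$; on the other hand this determinant's multiplicity equals $\sum_{j=1}^r n_j = m_a(z_0; A(\cdot))$ by the Smith-form representation \eqref{3.51}--\eqref{3.54} and Definition~\ref{d3.8}, since the block decomposition and the Gohberg--Sigal normal form compute the same local invariant of $A(\cdot)$ at $z_0$. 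I expect the main obstacle to be the middle step: carefully justifying the trace-cyclicity manipulations, in particular verifying that all the relevant products ($\overline{R_0(\zeta)V_2^*}$, $[I_{\cK}-K(\zeta)]^{-1}$, $V_1 R_0(\zeta)$, and their rearrangements) are finitely meromorphic at $z_0$ so that Lemma~\ref{l3.9} applies, and tracking which pieces are analytic and drop out of the contour integral. The factorization identity $\nu = m_a$ and the discreteness claim are comparatively routine given the earlier results.
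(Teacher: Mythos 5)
Your outline of the discreteness claim and the middle equality $m_a(z_0;H) = m_a(z_0;I_{\cK} - K(\cdot))$ closely matches the paper's argument. In particular the trace-cyclicity step is exact: $-K'(\zeta) = \overline{V_1 R_0(\zeta)^2 V_2^*} = V_1 R_0(\zeta)\,\overline{R_0(\zeta) V_2^*}$, so cyclically permuting $\overline{R_0(\zeta) V_2^*}$ via Lemma~\ref{l3.9} converts the residue of $R(\zeta) - R_0(\zeta)$ precisely into the integrand $[I_{\cK} - K(\zeta)]^{-1}(-K'(\zeta))$ of \eqref{3.53}; there is no remaining ``analytic-hence-irrelevant'' error term to track.

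The final equality $m_a(z_0;I_{\cK} - K(\cdot)) = \nu(z_0;I_{\cK} - K(\cdot))$, however, has a genuine gap. You apply Theorem~\ref{t3.5} to get $\nu(z_0;A(\cdot)) = \sum_j p_j$ (fine), but then assert $m\big(z_0;\det_{\ran(P(z_0))}(F(\cdot))\big) = \sum_j n_j = m_a(z_0;A(\cdot))$ ``since the block decomposition and the Gohberg--Sigal normal form compute the same local invariant.'' That assertion is exactly the content of the identity $m_a = \nu$ being proved; it is not an automatic consequence of Definition~\ref{d3.8} (which defines $m_a$ via the Smith-form exponents $n_j$) together with the Wolf block form \eqref{3.19} (which produces $F$ via a $z$-dependent similarity). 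Making it rigorous would require showing that similarity by the analytic, analytically invertible transformation $T(\cdot)$ and discarding the boundedly invertible block $G(\cdot)$ leave the quantity ${\tr}_{\cH}\big(\frac{1}{2\pi i}\ointctrclockwise A^{-1}A'\big)$ unchanged---true, but this essentially re-derives the argument-principle computation you are trying to bypass. The paper proceeds more concretely: it differentiates the factorization $A(z) = \prod_j[Q_j - (z-z_0)P_j]\,A_{n_0}(z)$, substitutes into \eqref{3.53}, uses cyclicity of the trace together with the analyticity of $A_{n_0}(\cdot)^{-1}A_{n_0}'(\cdot)$ near $z_0$ to kill all but the simple-pole contributions, and reads off the integrand $\sum_j \zeta^{-1}\tr_{\cH}(P_j)$, giving $m_a(z_0;A(\cdot)) = \sum_j p_j = \nu(z_0;A(\cdot))$ by a pure residue calculation. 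Replace your normal-form comparison with this (or an equivalent trace identity) to close the gap.
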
 
\begin{proof}\footnote{We slightly corrected and extended the first paragraph of this proof.}
By \eqref{2.13}, any singularity $z_0 \in \rho(H_0)$ of 
$R(z) = (H-zI_{\cH})^{-1}$ must be a singularity of 
$[I_{\cK} - K(\cdot)]^{-1}$. Since by Hypothesis \ref{h2.5}, $[I_{\cK} - K(\cdot)] \in \Phi(\cK)$ 
on $\rho(H_0)$ (and of course $[I_{\cK} - K(\cdot)]$ is analytic on $\rho(H_0)$), and by \eqref{2.25}, $[I_{\cK} - K(z_1)]$ is boundedly invertible for some 
$z_1 \in D(z_0; \varepsilon_1) \backslash \{z_0\} \subset \rho(H_0)$ for $0 < \varepsilon_1$ sufficiently small, 
$\varepsilon_1 < \varepsilon_0$, alternative $(ii)$ in Theorem \ref{t3.3}, with $\Omega = D(z_0;\varepsilon_1)$, applies to $[I_{\cK} - K(\cdot)]$. 
In particular, $[I_{\cK} - K(\cdot)]^{-1}$ is finitely meromorphic on $D(z_0;\varepsilon_1)$ and hence 
so is $R(\cdot)$. By \cite[Sect.\ III.6.5]{Ka80}, this implies that $z_0 \in \sigma_p(H)$ and then again by the finitely meromorphic property of $R(\cdot)$ on $D(z_0;\varepsilon_1)$, the Riesz projection associated with $z_0$, 
\begin{equation}
P(z_0;H)=\f{-1}{2\pi i} \ointctrclockwise_{\cC(z_0;\varepsilon)} 
dz \, (H - z I_{\cH})^{-1},   \quad 0 < \varepsilon < \varepsilon_1,   \lb{3.61}
\end{equation}
is finite-dimensional, which in turn is equivalent to the eigenvalue $z_0$ having finite algebraic multiplicity 
and hence to \eqref{3.59c}.

Without loss of generality we may assume that $z_0=0$ for the remainder of the proof of 
Theorem \ref{t3.11}. Identifying $A(\cdot) = I_{\cK} - K(\cdot)$, an application of 
Theorem~\ref{t3.5} (using the notation employed in the latter) yields for $0 < |z| < \varepsilon_0$, 
\begin{equation}
A(z) = [Q_1 - z P_1][Q_2 - z P_2] \cdots 
[Q_{n_0} - z P_{n_0}] A_{n_0} (z), 
\end{equation}
and 
\begin{equation}
\nu(0; A(\cdot)) = \sum_{j=1}^{n_0} p_j, \quad p_j = \dim(\ran(P_j)), \quad   
Q_j = I_{\cH} - P_j, \; 1 \leq j \leq n_0.   
\end{equation}
Thus, one computes
\begin{align}
 A(z)^{-1} A'(z) & = [A_{n_0} (z)]^{-1} [Q_{n_0} - z P_{n_0}]^{-1} 
\cdots [Q_1 - z P_1]^{-1}    \no \\
& \quad \times 
\big\{[-P_1][Q_2 - z P_2] \cdots [Q_{n_0} - z P_{n_0}] 
A_{n_0} (z)    \no \\
& \qquad + [Q_1 - z P_1] [-P_2] \cdots [Q_{n_0} - z P_{n_0}] A_{n_0} (z) 
\no \\
& \qquad + \cdots + 
[Q_1 - z P_1] [Q_2 - z P_2] \cdots [Q_{n_0 - 1} - z P_{n_0 - 1}] 
[- P_{n_0}] A_{n_0} (z)    \no \\
& \qquad + [Q_1 - z P_1] [Q_2 - zP_2] \cdots [Q_{n_0} - z P_{n_0}] 
A_{n_0}' (z) \big\}.     \lb{4.24aaa} 
\end{align}
Next, continuing the computation in \eqref{4.24aaa}, one infers
\begin{align}
&A(z)^{-1}A'(z)\no\\
&\quad = [A_{n_0} (z)]^{-1} A_{n_0}' (z) \lb{3.64}   \\
& \qquad + [A_{n_0} (z)]^{-1} [Q_{n_0} - z P_{n_0}]^{-1} 
\cdots [Q_1 - z P_1]^{-1}    \no \\
& \qquad \times 
\big\{[-P_1][Q_2 - z P_2] \cdots [Q_{n_0} - z P_{n_0}] 
A_{n_0} (z)    \no \\
& \qquad + [Q_1 - z P_1] [-P_2] \cdots [Q_{n_0} - z P_{n_0}] A_{n_0} (z) 
\no \\
& \qquad + \cdots + 
[Q_1 - z P_1] [Q_2 - z P_2] \cdots [Q_{n_0 - 1} - z P_{n_0 - 1}] 
[- P_{n_0}] A_{n_0} (z)\big\}.    \no 
\end{align}
Since the first term on the right-hand side of \eqref{3.64} is analytic at $z_0=0$, its contour 
integral over $C(0;\varepsilon)$, 
$0 < \varepsilon < \varepsilon_1$, vanishes and one obtains upon 
repeatedly applying cyclicity of the trace (i.e., 
${\tr}_{\cH}(CD) = {\tr}_{\cH}(DC)$ for $C, D \in \cB(\cH)$, with 
$CD, DC \in \cB_1(\cH)$),   
\begin{align}
& m_a(0; A(\cdot)) = {\tr}_{\cK} \bigg(\f{1}{2 \pi i} 
\ointctrclockwise_{C(0; \varepsilon)} d\zeta \, 
A(\zeta)^{-1} A'(\zeta) \bigg)  \no \\
& \quad = {\tr}_{\cK} \bigg(\f{1}{2 \pi i} 
\ointctrclockwise_{C(0; \varepsilon)} d\zeta \, 
[A_{n_0} (\zeta)]^{-1} [Q_{n_0} - \zeta P_{n_0}]^{-1}
\cdots [Q_1 - \zeta P_1]^{-1}    \no \\
& \qquad \qquad \times 
\big\{[-P_1][Q_2 - \zeta P_2] \cdots [Q_{n_0} - \zeta P_{n_0}] 
A_{n_0} (\zeta)    \no \\
& \qquad \qquad \quad \;\; + [Q_1 - \zeta P_1] [-P_2] \cdots 
[Q_{n_0} - \zeta P_{n_0}] A_{n_0} (\zeta)     \no \\
& \qquad \qquad \quad \;\; + \cdots + 
[Q_1 - \zeta P_1] [Q_2 - \zeta P_2] \cdots [Q_{n_0 - 1} - \zeta P_{n_0 - 1}] 
[- P_{n_0}] A_{n_0} (\zeta)\big\}\bigg)     \no \\
& \quad = \f{1}{2 \pi i} 
\ointctrclockwise_{C(0; \varepsilon)} d\zeta \, 
{\tr}_{\cK} \Big([A_{n_0} (\zeta)]^{-1} [Q_{n_0} - \zeta P_{n_0}]^{-1}
\cdots [Q_1 - \zeta P_1]^{-1}    \no \\
& \qquad \qquad \times 
\big\{[-P_1][Q_2 - \zeta P_2] \cdots [Q_{n_0} - \zeta P_{n_0}] 
A_{n_0} (\zeta)    \no \\
& \qquad \qquad \quad \;\; + [Q_1 - \zeta P_1] [-P_2] \cdots 
[Q_{n_0} - \zeta P_{n_0}] A_{n_0} (\zeta)     \no \\
& \qquad \qquad \quad \;\; + \cdots + 
[Q_1 - \zeta P_1] [Q_2 - \zeta P_2] \cdots [Q_{n_0 - 1} - \zeta P_{n_0 - 1}] 
[- P_{n_0}] A_{n_0} (\zeta)\big\}\Big)     \no \\
& \quad = \f{1}{2 \pi i} 
\ointctrclockwise_{C(0; \varepsilon)} d\zeta \, 
{\tr}_{\cK} \Big([Q_{n_0} - \zeta P_{n_0}]^{-1}
\cdots [Q_1 - \zeta P_1]^{-1}    \no \\
& \qquad \qquad \times 
\big\{[-P_1][Q_2 - \zeta P_2] \cdots [Q_{n_0} - \zeta P_{n_0}]   
+ [Q_1 - \zeta P_1] [-P_2] \cdots 
[Q_{n_0} - \zeta P_{n_0}]     \no \\
& \qquad \qquad \quad \;\; + \cdots + 
[Q_1 - \zeta P_1] [Q_2 - \zeta P_2] \cdots [Q_{n_0 - 1} - \zeta P_{n_0 - 1}] 
[- P_{n_0}]\big\}\Big)     \no \\ 
& \quad = \f{1}{2 \pi i} 
\ointctrclockwise_{C(0; \varepsilon)} d\zeta \, 
\sum_{j=1}^{n_0} \, {\tr}_{\cK}\big([Q_j - \zeta P_j]^{-1} [- P_j]\big)     \no \\
& \quad = \f{1}{2 \pi i} 
\ointctrclockwise_{C(0; \varepsilon)} d\zeta \, 
\sum_{j=1}^{n_0} \, {\tr}_{\cK}\big([Q_j - \zeta^{-1} P_j] [- P_j]\big)     \no \\
& \quad = \f{1}{2 \pi i} 
\ointctrclockwise_{C(0; \varepsilon)} d\zeta \, 
\bigg(\sum_{j=1}^{n_0} \, {\tr}_{\cK} (P_j) \bigg) \zeta^{-1}    \no \\ 
& \quad = \sum_{j=1}^{n_0} p_j = \nu(0;A(\cdot)).     \lb{3.65} 
\end{align}
Next, one computes 
\begin{align}
& m_a(0; I_{\cK} - K(\cdot)) = \f{1}{2 \pi i} {\tr}_{\cK}\bigg( 
\ointctrclockwise_{C(0; \varepsilon)} d\zeta \, 
[I_{\cK} - K(\zeta)]^{-1}[- K'(\zeta)]\bigg)     \no \\
& \quad = \f{1}{2 \pi i} {\tr}_{\cK}\bigg( 
\ointctrclockwise_{C(0; \varepsilon)} d\zeta \, 
[I_{\cK} - K(\zeta)]^{-1} V_1 R_0(\zeta) \ol{R_0(\zeta) V_2^*}\bigg)  \no \\ 
& \quad = \f{1}{2 \pi i} {\tr}_{\cH}\bigg( 
\ointctrclockwise_{C(0; \varepsilon)} d\zeta \, 
\ol{R_0(\zeta) V_2^*} [I_{\cK} - K(\zeta)]^{-1} V_1 R_0(\zeta)\bigg),  \lb{3.66} 
\end{align}
where we used \eqref{3.58} in the last step in \eqref{3.66}. Similarly, 
using \eqref{2.13}, 
\begin{align}
m_a(0;H) &= \f{-1}{2 \pi i} {\tr}_{\cH} \bigg(
\ointctrclockwise_{C(0; \varepsilon)} d\zeta \, 
(H - \zeta I_{\cH})^{-1}\bigg)    \no \\
&= \f{-1}{2 \pi i} {\tr}_{\cH} \bigg(
\ointctrclockwise_{C(0; \varepsilon)} d\zeta \, 
\big[(H - \zeta I_{\cH})^{-1} - (H_0 - \zeta I_{\cH})^{-1}\big] \bigg)    \no \\
&= \f{1}{2 \pi i} {\tr}_{\cH} \bigg(
\ointctrclockwise_{C(0; \varepsilon)} d\zeta \, 
\ol{R_0(\zeta) V_2^*} [I_{\cK} - K(\zeta)]^{-1} V_1 R_0(\zeta)\bigg).  \lb{3.67} 
\end{align}
Combining \eqref{3.65}--\eqref{3.67} proves \eqref{3.60}. 
\end{proof}

We note that the first equality in \eqref{3.60} is due to Latushkin and Sukhtyaev \cite{LS10} (our 
proof seems slightly shorter); the second equality in \eqref{3.60} is new. 

\begin{remark} \lb{r3.12}
It is amusing to note that in the special finite-dimensional case, $\cH = \cK$, 
$\dim(\cH) < \infty$, the following special case of \eqref{3.60}, namely,  
\begin{equation}
m_a(z_0; H) = \nu(z_0;  I_{\cH} - K(\cdot)), 
\quad z_0 \in \rho(H_0) \cap \sigma(H),  
\end{equation}
has basically a one-line proof! Indeed, in the matrix-valued context, the careful symmetrization in \eqref{2.13} becomes unnecessary and so abbreviating 
\begin{equation} 
V = V_2^* V_1,  \quad  K(z) = - V(H_0 - z I_{\cH})^{-1}, \quad z \in \rho(H_0), 
\end{equation} 
one obtains
\begin{align}
\begin{split} 
(H - z I_{\cH}) (H_0 - z I_{\cH})^{-1} 
&= (H_0 + V - z I_{\cH}) (H_0 - z I_{\cH})^{-1}     \\ 
&= I_{\cH} + V (H_0 - z I_{\cH})^{-1},   \quad z \in \rho(H_0).
\end{split}   
\end{align}
Thus, the underlying perturbation determinant becomes a ratio of determinants,  
\begin{align}
{\det}_{\cH} \big(I_{\cH} + V (H_0 - z I_{\cH})^{-1}\big) 
= \f{{\det}_{\cH} (H-z I_{\cH})}{{\det}_{\cH} (H_0-z I_{\cH})}, 
\quad z \in \rho(H_0), 
\end{align}
implying 
\begin{align}
\nu(z_0; I_{\cH} - K(\cdot)) &= m(z_0; {\det}_{\cH}(I_{\cH} - K(\cdot)))  \no \\
&= m_a(z_0;H) - m_a(z_0;H_0)   \no \\
&= m_a(z_0;H),   \quad z_0 \in \rho(H_0) \cap \sigma(H). 
\end{align}
\end{remark}

At first sight, \eqref{2.33} together with \eqref{3.60} appear to describe an incomplete picture since we did not yet invoke a discussion of the quantity 
$m_a(1; K(z_0))$. However, the following elementary two-dimensional examples show that $m_a(1; K(z_0))$ in general differs from the value determined in \eqref{3.60}: 

\begin{example} \lb{e3.13} Let $\cH = \cK = \bbC^2$.  \\
$(i)$ Introduce
\begin{equation}
H_0 = \begin{pmatrix} 1 & 1 \\ 0 & 1 \end{pmatrix}, \quad 
V = \begin{pmatrix} 0 & 0 \\ -1 & -2 \end{pmatrix}, \quad 
H= H_0 + V = \begin{pmatrix} 1 & 1 \\ -1 & -1 \end{pmatrix}.
\end{equation}
Then, $\sigma(H_0) = \{1\}$, $0 \in \sigma(H)$, and 
\begin{align}
& K(z) = - V (H_0 - z I_2)^{-1} 
= \f{-1}{(1-z)^2} \begin{pmatrix} 0 & 0 \\ - (1-z) & 1 - 2 (1-z) \end{pmatrix}, 
\quad z \in \bbC\backslash\{1\},   \no \\
& K(0) =  \begin{pmatrix} 0 & 0 \\ 1 & 1 \end{pmatrix},    \no \\
& {\det}_{\bbC^2}(H - z I_2) = z^2, \quad 
{\det}_{\bbC^2}(I_2 - K(z)) = z^2 (1 - z)^{-2},     \\
& {\det}_{\bbC^2}(z I_2 - K(0)) = - (1-z)\big[1 - (1-z)\big],    \no \\
& m_a(0; H) = \nu(0;I_2 - K(\cdot)) = 2, \quad 
m_g(0; H) = m_g(1; K(0)) = m_a(1; K(0)) = 1.    \no
\end{align}
$(ii)$ Introduce
\begin{equation}
H_0 = \begin{pmatrix} 1 & 1 \\ 0 & 1 \end{pmatrix}, \quad 
V = \begin{pmatrix} 0 & -1 \\ 1 & -1 \end{pmatrix}, \quad 
H= H_0 + V = \begin{pmatrix} 1 & 0 \\ 1 & 0 \end{pmatrix}.
\end{equation}
Then, $\sigma(H_0) = \{1\}$, $0 \in \sigma(H)$, and 
\begin{align}
& K(z) = - V (H_0 - z I_2)^{-1} 
= \f{-1}{(1-z)^2} \begin{pmatrix} 0 & - (1-z) \\ (1-z) & - (2-z) 
\end{pmatrix}, 
\quad z \in \bbC\backslash\{1\},    \no \\
& K(0) =  \begin{pmatrix} 0 & 1 \\ -1 & 2 \end{pmatrix},     \no \\
& {\det}_{\bbC^2}(H - z I_2) = -z(1-z), \quad 
{\det}_{\bbC^2}(I_2 - K(z)) = -z (1 - z)^{-1},     \\
& {\det}_{\bbC^2}(z I_2 - K(0)) = (1-z)^2,    \no \\
& m_g(0; H) = m_a(0; H) = \nu(0;I_2 - K(\cdot)) = m_g(1; K(0)) = 1, 
\quad m_a(1; K(0)) = 2.    \no
\end{align}
\end{example} 

We note that everything in this section applies to the Banach space setting, as is clear 
from the treatments in \cite[Ch.\ 4]{GL09}, \cite{GS71}, \cite{Ho71}. We should also note that 
notions of algebraic multiplicities of parts of the spectrum of operator-valued functions, differing 
from the one employed in the present paper, were used in \cite[Part~II]{LM07}, \cite{Ma95}, \cite{Ma97}.

\section{Algebraic Multiplicities of Operator-Valued Functions: \\ The Meromorphic Case}
\lb{s5}

The principal purpose of this section is to properly extend Theorem \ref{t3.11} to the case 
where $K(\cdot)$ is finitely meromorphic and \eqref{3.60} represents the analog of the 
Weinstein--Aronszajn formula in the sense that $H$ and $H_0$ have common discrete eigenvalues. 

\begin{hypothesis} \lb{h4.1} 
Let $\Omega \subseteq \bbC$ be open and connected, and $\cD_0 \subset \Omega$ a discrete set 
(i.e., a set without limit points in $\Omega$). Suppose that $M:\Omega \backslash \cD_0 \to \cB(\cH)$ is analytic and that 
$M(\cdot)$ is finitely meromorphic on $\Omega$. In addition, suppose that  
\begin{equation}
M(z) \in \Phi(\cH) \, \text{ for all } \, z\in\Omega \backslash \cD_0,    \lb{4.1} 
\end{equation}  
and for all $z_0 \in \cD_0$, such that 
\begin{equation}
M(z) = \sum_{k=-N_0}^{\infty} (z-z_0)^k M_k(z_0), \quad 
0 < |z - z_0| < \varepsilon_0,    \lb{4.2}
\end{equation}
for some $N_0 = N_0(z_0) \in \bbN$ and some $0 < \varepsilon_0 = \varepsilon_0(z_0)$ 
sufficiently small, with  
\begin{align}\lb{4.2a} 
\begin{split} 
& M_{-k}(z_0) \in \cF(\cH), \; 1 \leq k \leq N_0(z_0),  \quad  M_0(z_0) \in \Phi(\cH), \\
& M_k(z_0) \in \cB(\cH), \; k \in \bbN. 
\end{split} 
\end{align} 
\end{hypothesis}

One then recalls the meromorphic Fredholm theorem in the following form:

\begin{theorem} [{\cite[Proposition~4.1.4]{GL09}}, \cite{GS71}, \cite{Ho70}, {\cite[Theorem\ XIII.13]{RS78}}, \cite{RV69}, \cite{St68}] \lb{t4.2} ${}$ \\
Assume that $M:\Omega\backslash\cD_0 \to \cB(\cH)$ satisfies Hypothesis \ref{h4.1}. Then either \\
$(i)$ $M(z)$ is not boundedly invertible for any $z \in \Omega\backslash\cD_0$, \\[1mm]
or else, \\[1mm]
$(ii)$ $M(\cdot)^{-1}$ is finitely meromorphic on $\Omega$. More precisely, there exists a discrete 
subset $\cD_1 \subset \Omega$ $($possibly, $\cD_1 = \emptyset$$)$ such that 
$M(z)^{-1} \in \cB(\cH)$ for all $z \in \Omega\backslash\{\cD_0 \cup \cD_1\}$, $M(\cdot)^{-1}$ extends to 
an analytic function on $\Omega\backslash\cD_1$, meromorphic on $\Omega$. In addition, 
\begin{equation}\lb{4.5a}
M(z)^{-1} \in \Phi(\cH) \, \text{ for all } \, z \in \Omega\backslash\cD_1, 
\end{equation}
and if $z_1 \in \cD_1$ then for some $N_0(z_1) \in \bbN$, and for some $0 < \varepsilon_0(z_1)$ 
sufficiently small, 
\begin{equation}
M(z)^{-1} = \sum_{k= - N_0(z_1)}^{\infty} (z - z_1)^{k} D_k(z_1), \quad 
0 < |z - z_1| < \varepsilon_0(z_1),    \lb{4.5}
\end{equation}
with  
\begin{align} \lb{4.5b} 
\begin{split} 
& D_{-k}(z_1) \in \cF(\cH), \;1 \leq k \leq N_0(z_1),  \quad  D_0(z_1) \in \Phi(\cH), \\
& D_k(z_1) \in \cB(\cH), \; k \in \bbN. 
\end{split} 
\end{align}
In addition, if $[I_{\cH} - M(z)] \in \cB_{\infty}(\cH)$ for all $z \in \Omega\backslash\cD_0$, then 
\begin{equation}
\big[I_{\cH} - M(z)^{-1}\big] \in \cB_{\infty}(\cH), \quad z \in \Omega\backslash\cD_1, 
\quad [I_{\cH} - D_0(z_1)] \in \cB_{\infty}(\cH), \quad z_1 \in \cD_1.  
\end{equation} 
\end{theorem}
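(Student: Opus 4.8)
The plan is to reduce the statement to the analytic Fredholm theorem, Theorem~\ref{t3.3}, treating $\Omega\backslash\cD_0$ and small disks about the points of $\cD_0$ separately. Since $\cD_0$ is discrete in $\Omega$, the set $\Omega\backslash\cD_0$ is open and connected, and on it $M(\cdot)$ is analytic, $\cB(\cH)$-valued, and Fredholm by \eqref{4.1}; hence Theorem~\ref{t3.3} applies with $\Omega$ replaced by $\Omega\backslash\cD_0$. If $M(z)$ is boundedly invertible for no $z\in\Omega\backslash\cD_0$, we are in case $(i)$ and nothing more is needed. Otherwise $M(\cdot)$ is invertible at some point of $\Omega\backslash\cD_0$, and Theorem~\ref{t3.3} yields a discrete set $\cD_1'\subset\Omega\backslash\cD_0$ off which $M(\cdot)^{-1}$ is analytic and $\Phi(\cH)$-valued, finitely meromorphic on $\Omega\backslash\cD_0$, with the Laurent structure \eqref{4.5}--\eqref{4.5b} at each point of $\cD_1'$ (and with $[I_{\cH}-M(\cdot)^{-1}]$ compact whenever $[I_{\cH}-M(\cdot)]$ is). It remains to extend these conclusions across each $z_0\in\cD_0$.

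The key device is a local factorization at $z_0\in\cD_0$ absorbing the finite-rank principal part of $M(\cdot)$ into an explicitly invertible meromorphic factor. Using \eqref{4.2}--\eqref{4.2a}, let $P$ be the orthogonal projection of $\cH$ onto the finite-dimensional subspace $\sum_{k=1}^{N_0}\ran(M_{-k}(z_0))$, and set $Q=I_{\cH}-P$. Fix $\varepsilon_0>0$ so small that $D(z_0;\varepsilon_0)\subset\Omega$ and $D(z_0;\varepsilon_0)\cap\cD_0=\{z_0\}$, and put
\[
E(z) = Q + (z-z_0)^{-N_0} P, \qquad E(z)^{-1} = Q + (z-z_0)^{N_0} P,
\]
the second identity being immediate from $PQ=QP=0$; in particular $E(\cdot)^{-1}$ is analytic on $D(z_0;\varepsilon_0)$. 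Set $N(z)=E(z)^{-1}M(z)$. Since $Q$ annihilates $\ran(M_{-k}(z_0))$ for $1\leq k\leq N_0$, the function $QM(\cdot)$ is analytic at $z_0$, and $(z-z_0)^{N_0}PM(\cdot)$ is trivially analytic at $z_0$; hence $N(\cdot)$ is analytic on $D(z_0;\varepsilon_0)$, with $N(z_0)=QM_0(z_0)+M_{-N_0}(z_0)$. Here $QM_0(z_0)\in\Phi(\cH)$, being the product of the Fredholm operators $Q$ and $M_0(z_0)$, while $M_{-N_0}(z_0)\in\cF(\cH)$, so $N(z_0)\in\Phi(\cH)$ as a Fredholm operator plus a finite-rank one; and for $z\in D(z_0;\varepsilon_0)\backslash\{z_0\}$, $N(z)$ is the product of the boundedly invertible operator $E(z)^{-1}$ and the Fredholm operator $M(z)$, hence Fredholm. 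Thus $N(\cdot)$ satisfies Hypothesis~\ref{h3.2} on $D(z_0;\varepsilon_0)$.

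Applying Theorem~\ref{t3.3} to $N(\cdot)$ on $D(z_0;\varepsilon_0)$: if $N(\cdot)$ were nowhere invertible there, then $M(\cdot)=E(\cdot)N(\cdot)$ would fail to be invertible on the nonempty open set $D(z_0;\varepsilon_0)\backslash\{z_0\}\subset\Omega\backslash\cD_0$, contradicting the dichotomy already established; hence $N(\cdot)^{-1}$ is finitely meromorphic on a (possibly smaller) disk about $z_0$, with the structure \eqref{4.5b}. Therefore $M(z)^{-1}=N(z)^{-1}E(z)^{-1}=N(z)^{-1}\big[Q+(z-z_0)^{N_0}P\big]$ is the product of a finitely meromorphic function and an analytic one, hence finitely meromorphic at $z_0$ by Lemma~\ref{l3.9}; in particular $M(\cdot)^{-1}$ is analytic on a punctured disk about $z_0$. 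Multiplying out the two Laurent series and using that the constant term $Q$ of $E(\cdot)^{-1}$ is Fredholm while all principal-part coefficients of $N(\cdot)^{-1}$ belong to $\cF(\cH)$, one reads off that the constant term of $M(\cdot)^{-1}$ at $z_0$ equals $C_0 Q$ (in the notation of \eqref{3.7B}, applied to $N(\cdot)$) plus a finite-rank operator, hence lies in $\Phi(\cH)$, and that the remaining coefficients obey \eqref{4.5b}. Taking $\cD_1$ to be the set of poles of $M(\cdot)^{-1}$ — a subset of $\cD_1'\cup\cD_0$ with no limit point in $\Omega$, since $M(\cdot)^{-1}$ is analytic on punctured disks about the points of $\cD_0$ and $\cD_1'$ is discrete in $\Omega\backslash\cD_0$ — one obtains case $(ii)$ on all of $\Omega$.

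For the compactness addendum, suppose $[I_{\cH}-M(z)]\in\cB_\infty(\cH)$ on $\Omega\backslash\cD_0$. Near $z_0\in\cD_0$ one has $I_{\cH}-N(z)=[I_{\cH}-M(z)]+\big[1-(z-z_0)^{N_0}\big]PM(z)$, which is compact since $PM(z)$ has finite rank; the compact part of Theorem~\ref{t3.3} then gives $[I_{\cH}-N(z)^{-1}]\in\cB_\infty(\cH)$ and $[I_{\cH}-C_0]\in\cB_\infty(\cH)$, whence $I_{\cH}-M(z)^{-1}=[I_{\cH}-N(z)^{-1}]+\big[1-(z-z_0)^{N_0}\big]N(z)^{-1}P$ and $I_{\cH}-D_0(z_0)$ are compact as well. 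I expect the only genuinely non-routine point to be the construction of the absorbing factor $E(\cdot)$ and the verification that the corrected value $N(z_0)=QM_0(z_0)+M_{-N_0}(z_0)$ is still Fredholm; once $N(\cdot)$ is recognized as an analytic family of Fredholm operators, the remainder is an application of Theorem~\ref{t3.3} and Lemma~\ref{l3.9} together with routine bookkeeping of Laurent coefficients. When $\cD_0=\emptyset$ the assertion reduces to Theorem~\ref{t3.3} itself.
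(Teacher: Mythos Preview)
The paper does not supply its own proof of Theorem~\ref{t4.2}; the result is quoted from the literature (Gohberg--Sigal \cite{GS71}, Howland \cite{Ho70}, Reed--Simon \cite{RS78}, Ribaric--Vidav \cite{RV69}, Steinberg \cite{St68}), so there is no in-paper argument to compare against. Your proof is correct and is essentially the standard reduction used in those references, particularly Howland \cite{Ho70}: absorb the finite-rank principal part at each $z_0\in\cD_0$ into an elementary meromorphic factor so that the quotient becomes an \emph{analytic} Fredholm family, then invoke Theorem~\ref{t3.3}.

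Two small remarks. First, your claim that ``$M(\cdot)^{-1}$ is analytic on a punctured disk about $z_0$'' requires shrinking $\varepsilon_0$ so that $z_0$ is the only possible pole of $N(\cdot)^{-1}$ in $D(z_0;\varepsilon_0)$; since Theorem~\ref{t3.3} gives only a discrete pole set for $N(\cdot)^{-1}$, this shrinking is legitimate but should be made explicit, as it is precisely what prevents $\cD_1'$ from accumulating at $\cD_0$ and hence guarantees discreteness of $\cD_1$ in $\Omega$. Second, in the formula $N(z_0)=QM_0(z_0)+M_{-N_0}(z_0)$ you are tacitly using $PM_{-N_0}(z_0)=M_{-N_0}(z_0)$, which holds because $\ran(M_{-N_0}(z_0))\subseteq\ran(P)$ by construction of $P$; this is harmless but worth stating. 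With these clarifications the argument is complete.
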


Next we strengthen Hypothesis \ref{h4.1} as follows: 

\begin{hypothesis} \lb{h4.3} 
Suppose $M(\cdot):\Omega\backslash\cD_0 \to \cB(\cH)$ satisfies Hypothesis \ref{h4.1}, let 
$z_0\in \cD_0$, and assume that $M(\cdot)$ is boundedly invertible on 
$D(z_0; \varepsilon_0) \backslash \{z_0\}$ for some $0 < \varepsilon_0$ sufficiently small. 
\end{hypothesis}

\begin{definition} \lb{d4.4} 
Assume Hypothesis \ref{h4.3}. Then the {\it index of $M(\cdot)$ with respect to the counterclockwise 
oriented circle $C(z_0; \varepsilon)$}, $\ind_{C(z_0; \varepsilon)}(M(\cdot))$, is defined by 
 \begin{align}
\begin{split}
\ind_{C(z_0; \varepsilon)}(M(\cdot)) &= {\tr}_{\cH}\bigg(\f{1}{2\pi i} 
\ointctrclockwise_{C(z_0; \varepsilon)} d\zeta \, 
M'(\zeta) M(\zeta)^{-1}\bigg)     \lb{4.8} \\
& = {\tr}_{\cH}\bigg(\f{1}{2\pi i} 
\ointctrclockwise_{C(z_0; \varepsilon)} d\zeta \, 
M(\zeta)^{-1} M'(\zeta)\bigg), \quad 0 < \varepsilon < \varepsilon_0. 
\end{split}
\end{align}  
\end{definition}

By the operator-valued version of the argument principle proved in \cite{GS71} (see also 
\cite[Theorem\ 4.4.1]{GL09}), one has 
 \begin{equation}
\ind_{C(z_0; \varepsilon)}(M(\cdot)) \in \bbZ     \lb{4.9}
\end{equation}  
under the conditions of Definition \ref{d4.4}. 
 
The next result presents our generalization of Theorem \ref{t3.11} to the case 
where $K(\cdot)$ is finitely meromorphic. In particular, we will now prove the analog of the 
Weinstein--Aronszajn formula (cf., e.g., \cite{AB70}, \cite{Ho70}, \cite[Sect.\ IV.6]{Ka80}, \cite{Ku61}, 
\cite[Sect.\ 9.3]{WS72}) in the case where $H$ and $H_0$ have common discrete eigenvalues. 

\begin{theorem} \lb{t4.5}
Assume Hypothesis \ref{h2.5} and suppose that $z_0 \in (\sigma_d(H_0) \cap \sigma(H))$ with 
$D(z_0; \varepsilon_0) \cap \sigma(H) = \{z_0\}$ for some $0 < \varepsilon_0 = \varepsilon_0(z_0)$ 
sufficiently small. Then\footnote{We slightly corrected and extended the formulation of the remainder of this theorem.} $K(\dott)$ is analytic on $\rho(H_0)$ and finitely meromorphic on $D(z_0; \varepsilon_0)$. In particular, near $z_0$, $K(\dott)$ takes on the form  
\begin{equation}
K(z) = \sum_{k=-N_0}^{\infty} (z-z_0)^k K_k(z_0), \quad 
0 < |z - z_0| < \varepsilon_0,    \lb{4.9a}
\end{equation}
for some $N_0 = N_0(z_0) \in \bbN$ and some $0 < \varepsilon_0$ 
sufficiently small, with  
\begin{equation}\lb{4.9b} 
K_{-k}(z_0) \in \cF(\cK), \; 1 \leq k \leq N_0(z_0), \quad 
K_k(z_0) \in \cB(\cK), \; k \in \bbN \cup \{0\}.   
\end{equation} 
Given \eqref{4.9a}, we now assume that\footnote{Condition \eqref{4.9c} was inadvertently omitted 
in our paper,  {\it Integral Eq.~Operator Theory} {\bf 82}, 61--94 (2015), as kindly pointed out by Jussi Behrndt (see the erratum in IEOT {\bf 85}, 301--302 (2016)).} 
\begin{equation}
[I_{\cK} - K_0(z_0)] \in \Phi(\cK).    \lb{4.9c} 
\end{equation} 
Then $z_0$ is a discrete eigenvalue of $H$,  
\begin{equation}
z_0 \in \sigma_d(H),     \lb{4.10}
\end{equation}
and $I_{\cK} - K(\cdot)$ satisfies the conditions of Hypothesis \ref{h4.3} on 
$\Omega= D(z_0; \varepsilon_1)$ for $0 < \varepsilon_1 < \varepsilon_0$.  
In addition,  
\begin{equation}
m_a(z_0; H) = m_a(z_0; H_0)  + \ind_{C(z_0; \varepsilon)}(I_{\cK} - K(\dott)), 
\quad 0 < \varepsilon < \varepsilon_1.      \lb{4.11}
\end{equation}
\end{theorem}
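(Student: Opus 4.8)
The plan is to localize near $z_0$, apply the meromorphic Fredholm theorem, Theorem \ref{t4.2}, to $M(\cdot)=I_{\cK}-K(\cdot)$, read off \eqref{4.10} from the resolvent identity \eqref{2.13}, and then establish \eqref{4.11} by a contour‑integral/trace computation modeled on the proof of Theorem \ref{t3.11}. First I would shrink $\varepsilon_0>0$ so that the closed disk $\ol{D(z_0;\varepsilon_0)}$ contains no point of $\sigma(H_0)$ other than $z_0$ (possible since $z_0\in\sigma_d(H_0)$) and, as already assumed, meets $\sigma(H)$ only in $z_0$. Then $D(z_0;\varepsilon_0)\backslash\{z_0\}\subset\rho(H_0)\cap\rho(H)$, so Hypothesis \ref{h2.5}\,$(iv)$ gives $[I_{\cK}-K(z)]\in\Phi(\cK)$ there, and \eqref{2.33a} gives $1\in\rho(K(z))$, i.e., $[I_{\cK}-K(z)]$ is boundedly invertible, for $0<|z-z_0|<\varepsilon_0$. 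Since $K(\cdot)$ is finitely meromorphic at $z_0$ by \eqref{4.9a}, \eqref{4.9b} (and, more generally, at each point of $\sigma_d(H_0)$: via \eqref{2.6}, \eqref{2.10} one writes $V_1R_0(z)=V_1R_0(z_2)+(z-z_2)V_1R_0(z_2)R_0(z)$ and $\ol{R_0(z)V_2^*}=\ol{R_0(z_2)V_2^*}+(z-z_2)R_0(z)\ol{R_0(z_2)V_2^*}$ for fixed $z_2\in\rho(H_0)$, so that $V_1R_0(\cdot)$, $\ol{R_0(\cdot)V_2^*}$, and hence $K(\cdot)$, inherit finite meromorphy from $R_0(\cdot)$, multiplication by a fixed bounded operator preserving finite‑rank principal parts), and since \eqref{4.9c} supplies the Fredholm property of the constant term $I_{\cK}-K_0(z_0)$, the function $I_{\cK}-K(\cdot)$ satisfies Hypothesis \ref{h4.3} on $D(z_0;\varepsilon_0)$ with $\cD_0=\{z_0\}$ (and, with the analogous check at the remaining points of $\sigma_d(H_0)$, on $\Omega$). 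Because $I_{\cK}-K(\cdot)$ is boundedly invertible somewhere in the punctured disk, alternative $(ii)$ of Theorem \ref{t4.2} applies and $[I_{\cK}-K(\cdot)]^{-1}$ is finitely meromorphic at $z_0$, with finite‑rank principal part.

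To obtain \eqref{4.10} I would use \eqref{2.13} on $D(z_0;\varepsilon_0)\backslash\{z_0\}$ to write $R(\cdot)$ as the sum of $R_0(\cdot)$ and the product $\ol{R_0(\cdot)V_2^*}\,[I_{\cK}-K(\cdot)]^{-1}\,V_1R_0(\cdot)$. Each factor is finitely meromorphic at $z_0$: $R_0(\cdot)$ because $z_0\in\sigma_d(H_0)$; $V_1R_0(\cdot)$ and $\ol{R_0(\cdot)V_2^*}$ by the identities above with $z_1=z_0$; and $[I_{\cK}-K(\cdot)]^{-1}$ by the previous step. Since a product and a sum of functions finitely meromorphic at $z_0$ are again finitely meromorphic at $z_0$ (immediate from the Laurent coefficients, or from Lemma \ref{l3.9}), $R(\cdot)$ is finitely meromorphic at $z_0$; as $z_0\in\sigma(H)$, the singularity is not removable, so $z_0$ is a pole of $R(\cdot)$ with finite‑rank principal part. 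Hence the Riesz projection $P(z_0;H)=\frac{-1}{2\pi i}\ointctrclockwise_{C(z_0;\varepsilon)}dz\,(H-zI_{\cH})^{-1}$ is finite‑dimensional, so $m_a(z_0;H)=\tr_{\cH}(P(z_0;H))<\infty$, proving \eqref{4.10}.

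For \eqref{4.11}, writing $m_a(z_0;H_0)=\tr_{\cH}(P(z_0;H_0))$ and $m_a(z_0;H)=\tr_{\cH}(P(z_0;H))$ (both finite) and inserting \eqref{2.13} on the punctured disk,
\begin{align*}
m_a(z_0;H)-m_a(z_0;H_0)
&=\frac{-1}{2\pi i}\tr_{\cH}\bigg(\ointctrclockwise_{C(z_0;\varepsilon)}d\zeta\,\big[(H-\zeta I_{\cH})^{-1}-(H_0-\zeta I_{\cH})^{-1}\big]\bigg)\\
&=\frac{1}{2\pi i}\tr_{\cH}\bigg(\ointctrclockwise_{C(z_0;\varepsilon)}d\zeta\,\ol{R_0(\zeta)V_2^*}\,[I_{\cK}-K(\zeta)]^{-1}\,V_1R_0(\zeta)\bigg),\quad 0<\varepsilon<\varepsilon_0.
\end{align*}
The integrand is finitely meromorphic at $z_0$, so the contour integral has finite rank and the trace is well defined; cyclicity of the trace (Lemma \ref{l3.9}, applied on $\cH\oplus\cK$, exactly as in the step leading to \eqref{3.66}) moves $\ol{R_0(\zeta)V_2^*}$ to the right and turns $\tr_{\cH}$ into $\tr_{\cK}$, while $V_1R_0(\zeta)\,\ol{R_0(\zeta)V_2^*}=-K'(\zeta)$. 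Hence the right‑hand side equals
\begin{equation*}
\tr_{\cK}\bigg(\frac{1}{2\pi i}\ointctrclockwise_{C(z_0;\varepsilon)}d\zeta\,[I_{\cK}-K(\zeta)]^{-1}\big(-K'(\zeta)\big)\bigg)=\ind_{C(z_0;\varepsilon)}(I_{\cK}-K(\cdot))
\end{equation*}
by Definition \ref{d4.4}, which is \eqref{4.11}.

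The step I expect to be the main obstacle is the verification, in the first paragraph, that $I_{\cK}-K(\cdot)$ genuinely satisfies the hypotheses of the meromorphic Fredholm theorem at $z_0$: this is precisely where the (newly added) condition \eqref{4.9c} is needed, and it forces one to check carefully that $K(\cdot)$ and the auxiliary products $V_1R_0(\cdot)$, $\ol{R_0(\cdot)V_2^*}$ are finitely meromorphic at $z_0$ with finite‑rank principal parts. Once this is secured, the remainder is a routine adaptation of the proof of Theorem \ref{t3.11}, the only real bookkeeping subtlety being the passage between $\cH$‑traces and $\cK$‑traces via the direct‑sum form of Lemma \ref{l3.9}.
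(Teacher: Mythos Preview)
Your proposal is correct and follows essentially the same approach as the paper: verify Hypothesis~\ref{h4.3} for $I_{\cK}-K(\cdot)$ via the meromorphic Fredholm theorem, deduce \eqref{4.10} from the finitely meromorphic structure of $R(\cdot)$ through \eqref{2.13}, and derive \eqref{4.11} by the same trace/cyclicity computation as in Theorem~\ref{t3.11} (the paper runs the chain \eqref{4.12} starting from $\ind_{C(z_0;\varepsilon)}(I_{\cK}-K(\cdot))$ rather than from $m_a(z_0;H)-m_a(z_0;H_0)$, but this is cosmetic). If anything, your argument is more careful than the paper's: you spell out why $V_1R_0(\cdot)$, $\ol{R_0(\cdot)V_2^*}$, and hence $K(\cdot)$ are finitely meromorphic at $z_0\in\sigma_d(H_0)$ via the resolvent identities \eqref{2.6}--\eqref{2.11}, and you make explicit where \eqref{4.9c} is consumed, whereas the paper simply refers back to the proof of Theorem~\ref{t3.11} (which, strictly speaking, treated the case $z_0\in\rho(H_0)$).
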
 
\begin{proof}\footnote{We corrected and extended the first paragraph of this proof.}
That \eqref{4.10} holds is shown as follows: The assumption $z_0 \in \sigma_d (H_0)$ yields that 
$R_0 (\dott)$, $\ol{R_0(\dott) V_2^*}$, $V_1 R_0(\dott)$, $K(\dott)$, and $I_{\cK} - K(\dott)$ are analytic in 
$D(z_0;\varepsilon_0)\backslash \{z_0\}$ and finitely meromorphic on $D(z_0;\varepsilon_0)$. 
The assumption $D(z_0; \varepsilon_0) \cap \sigma(H) = \{z_0\}$ yields 
$D(z_0;\varepsilon_0)\backslash \{z_0\} \subset \rho(H)$  
and hence \eqref{2.25} implies that $[I_{\cK} - K(\dott)]^{-1} \in \cB(\cK)$ on 
$D(z_0;\varepsilon_0)\backslash \{z_0\}$. Thus, applying the meromorphic Fredholm Theorem \ref{t4.2}, 
$[I_{\cK} - K(\cdot)]^{-1}$ is analytic in $D(z_0;\varepsilon_1)\backslash \{z_0\}$ and finitely meromorphic on $D(z_0;\varepsilon_1)$ for some $0 < \varepsilon_1 < \varepsilon_0$. In particular, $[I_{\cK} - K(\cdot)]^{-1}$ satisfies the conditions of Hypothesis \ref{h4.3}. By \eqref{2.13}, this implies that $R(\dott)$ analytic in $D(z_0;\varepsilon_1)\backslash \{z_0\}$ and finitely meromorphic on $D(z_0;\varepsilon_1)$. As in the proof of Theorem \ref{t3.11},  \cite[Sect.\ III.6.5]{Ka80} implies that $z_0 \in \sigma_p(H)$ and then again by the finitely meromorphic property of $R(\cdot)$ on $D(z_0;\varepsilon_1)$, the Riesz projection associated with $z_0$, is finite-dimensional, which in turn is equivalent to the eigenvalue $z_0$ having finite algebraic multiplicity and hence yields \eqref{4.10}. 

The rest also follows the proof of Theorem \ref{t3.11}: 
\begin{align}
& \ind_{C(z_0; \varepsilon)}(I_{\cK} - K(\cdot)) = \f{1}{2 \pi i} {\tr}_{\cK}\bigg( 
\ointctrclockwise_{C(z_0; \varepsilon)} d\zeta \, 
[I_{\cK} - K(\zeta)]^{-1}[- K'(\zeta)]\bigg)     \no \\
& \quad = \f{1}{2 \pi i} {\tr}_{\cK}\bigg( 
\ointctrclockwise_{C(z_0; \varepsilon)} d\zeta \, 
[I_{\cK} - K(\zeta)]^{-1} V_1 R_0(\zeta) \ol{R_0(\zeta) V_2^*}\bigg)  \no \\ 
& \quad = \f{1}{2 \pi i} {\tr}_{\cH}\bigg( 
\ointctrclockwise_{C(z_0; \varepsilon)} d\zeta \, 
\ol{R_0(\zeta) V_2^*} [I_{\cK} - K(\zeta)]^{-1} V_1 R_0(\zeta)\bigg)    \no \\
& \quad = \f{-1}{2 \pi i} {\tr}_{\cH} \bigg(
\ointctrclockwise_{C(z_0; \varepsilon)} d\zeta \, 
\big[(H - \zeta I_{\cH})^{-1} - (H_0 - \zeta I_{\cH})^{-1}\big] \bigg)    \no \\ 
& \quad = m_a(z_0;H) - m_a(z_0; H_0).        \lb{4.12} 
\end{align} 
\end{proof}

\section{Pairs of Projections and an Index Computation}
\lb{s6}

In our final section we apply some of the principal results of Section \ref{s4} and \ref{s5} to 
pairs of projections and their index and make a connection to an underlying 
Birman--Schwinger-type operator.

One recalls that an ordered pair of projections $(P,Q)$ is called a {\it Fredholm pair} if 
\begin{equation} 
QP:\ran(P)\rightarrow \ran(Q) \, \text{ belongs to $\Phi(\cH)$.} 
\end{equation} 
In this case, the {\it index} of the 
pair $(P,Q)$, denoted $\ind(P,Q)$, is defined to be the index of the Fredholm operator $QP$:
\begin{equation}\lb{7.1}
\ind(P,Q) = \dim(\ker(QP)) - \dim(\ker((QP)^*)).
\end{equation}
For pertinent literature related to pairs of projections, we refer to \cite{AS94}, \cite{ASS94}, 
\cite{BS10}, \cite{Da58}, \cite{Ef89}, \cite{Ha69}, \cite{Ka97}, \cite{Ka55}, \cite{Pu11}, \cite{Si10}, and the 
references cited therein.  The following results are well-known and will be used throughout this section.

\begin{theorem}[\cite{AS94}, \cite{ASS94}]\lb{t7.1}
If $(P,Q)$ is a pair of orthogonal projections, then the following items hold.\\[1mm]
$(i)$  If $(P-Q) \in \cB_{\infty}(\cH)$, then $(P,Q)$ is a Fredholm pair.\\[1mm]
$(ii)$  If $(P,Q)$ is a Fredholm pair, then
\begin{equation}\lb{7.2}
\ind(P,Q) = m_1-m_{-1},
\end{equation}
where
\begin{equation}\lb{7.3}
\begin{split}
m_1&:= \dim(\{v\in \cH\, |\, Pv=v,\, Qv=0\}),\\
m_{-1}&:=\dim(\{v\in \cH\, |\, Pv=0,\, Qv=v\}).
\end{split}
\end{equation}
$(iii)$  If $(P,Q)$ is a Fredholm pair, then $(Q,P)$ is a Fredholm pair, and
\begin{equation}\lb{7.3aa}
\ind(Q,P) = -\ind(P,Q).
\end{equation}
$(iv)$  If $(P,Q)$ is a Fredholm pair with $(P-Q) \in \cB_{2n+1}(\cH)$, for some $n\in \bbN$, then
\begin{equation}\lb{7.3a}
\tr_{\cH}\big((P-Q)^{2n+1}\big) = \ind(P,Q).
\end{equation}
\end{theorem}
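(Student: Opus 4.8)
The plan is to deduce all four items from two elementary algebraic identities for the bounded self-adjoint operators
\[ A := P - Q, \qquad B := P + Q - I_{\cH}, \]
namely $A^2 + B^2 = I_{\cH}$ and $AB + BA = 0$, each of which follows by a direct multiplication using $P^2 = P$ and $Q^2 = Q$. In particular $\|A\| \le 1$, and when $(P-Q) \in \cB_{\infty}(\cH)$ the operator $A$ is compact and self-adjoint, so $\spec(A) \backslash \{0\}$ consists of eigenvalues of finite multiplicity contained in $[-1,1]$ having $0$ as their only possible accumulation point.

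For $(i)$ I would use the identity $PQP = P - PAP$ (valid on all of $\cH$) to see that the composition $\big(PQ|_{\ran(Q)}\big)\big(QP|_{\ran(P)}\big)$ equals $I_{\ran(P)}$ minus the compact operator $(PAP)|_{\ran(P)}$, and symmetrically that $\big(QP|_{\ran(P)}\big)\big(PQ|_{\ran(Q)}\big)$ differs from $I_{\ran(Q)}$ by a compact operator on $\ran(Q)$; since both compositions are then Fredholm, so is $QP \colon \ran(P) \to \ran(Q)$, i.e., $(P,Q)$ is a Fredholm pair. For $(ii)$, a short computation identifies $\ker\big(QP|_{\ran(P)}\big) = \{v \in \cH \,|\, Pv = v,\ Qv = 0\}$ and, because $\big(QP|_{\ran(P)}\big)^* = PQ|_{\ran(Q)}$, also $\ker\big((QP|_{\ran(P)})^*\big) = \{v \in \cH \,|\, Qv = v,\ Pv = 0\}$; these have dimensions $m_1$ and $m_{-1}$, whence $\ind(P,Q) = m_1 - m_{-1}$. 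Using $A^2 + B^2 = I_{\cH}$ one checks moreover that $Av = v$ forces $B^2 v = 0$, hence $Bv = 0$, hence $Pv = v$ and $Qv = 0$, and conversely; therefore $m_1 = \dim\ker(I_{\cH} - A)$, and likewise $m_{-1} = \dim\ker(I_{\cH} + A)$. Item $(iii)$ is then immediate, since interchanging $P$ and $Q$ replaces $A$ by $-A$, which swaps $m_1$ and $m_{-1}$.

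The only substantive point is $(iv)$, and here the anticommutation $AB = -BA$ is decisive. Assuming $(P - Q) \in \cB_{2n+1}(\cH)$ for some $n \in \bbN$, the operator $(P-Q)^{2n+1} = A^{2n+1}$ is self-adjoint and trace class, so the spectral decomposition of the compact self-adjoint operator $A$ yields $\tr_{\cH}\big(A^{2n+1}\big) = \sum_{\lambda} \lambda^{2n+1} \dim\ker(A - \lambda I_{\cH})$, an absolutely convergent sum over the nonzero eigenvalues $\lambda$ of $A$. For $0 < |\lambda| < 1$, the identity $A^2 + B^2 = I_{\cH}$ (together with $\lambda^2 \ne 1$) shows $B$ is injective on $\ker(A - \lambda I_{\cH})$, while $A(Bv) = -BAv = -\lambda (Bv)$ shows $B$ maps $\ker(A - \lambda I_{\cH})$ into $\ker(A + \lambda I_{\cH})$; running the same argument with $-\lambda$ forces $\dim\ker(A - \lambda I_{\cH}) = \dim\ker(A + \lambda I_{\cH})$. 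Since $\lambda^{2n+1} + (-\lambda)^{2n+1} = 0$, every pair $\pm\lambda$ with $0 < |\lambda| < 1$ contributes nothing to the sum, leaving only $\lambda = 1$ (contributing $m_1$) and $\lambda = -1$ (contributing $(-1)^{2n+1} m_{-1} = -m_{-1}$), so $\tr_{\cH}\big((P - Q)^{2n+1}\big) = m_1 - m_{-1} = \ind(P,Q)$ by $(ii)$.

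I expect the main obstacle to be bookkeeping rather than conceptual difficulty: one must verify carefully that for $0 < |\lambda| < 1$ the operator $B$ really does furnish injections in both directions between the finite-dimensional eigenspaces $\ker(A \mp \lambda I_{\cH})$, and that the rearrangement of the eigenvalue series is legitimate — which is exactly what the hypothesis $A^{2n+1} \in \cB_1(\cH)$ guarantees. It is worth noting that none of the Fredholm-determinant or factorization machinery of the earlier sections is needed here; the proof rests only on the two Clifford-type relations $A^2 + B^2 = I_{\cH}$, $AB = -BA$, and on the spectral theorem for compact self-adjoint operators.
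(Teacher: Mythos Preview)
The paper does not supply its own proof of this theorem: it is quoted from \cite{AS94} and \cite{ASS94} as a known result (``The following results are well-known and will be used throughout this section''), so there is nothing in the paper to compare against. Your argument is correct and is, in essence, exactly the classical Avron--Seiler--Simon proof from \cite{ASS94}: introduce $A=P-Q$, $B=P+Q-I_{\cH}$, check the Clifford-type relations $A^2+B^2=I_{\cH}$ and $AB+BA=0$, read off $m_{\pm 1}=\dim\ker(I_{\cH}\mp A)$, and use the anticommutation to pair the eigenspaces $\ker(A-\lambda I_{\cH})$ and $\ker(A+\lambda I_{\cH})$ for $0<|\lambda|<1$, leaving only the contributions at $\lambda=\pm 1$ in $\tr_{\cH}(A^{2n+1})$.

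One small omission: in item $(iii)$ you invoke the formula from $(ii)$ for the pair $(Q,P)$ without first establishing that $(Q,P)$ is Fredholm. This is immediate, since $PQ|_{\ran(Q)}=\big(QP|_{\ran(P)}\big)^*$ and adjoints of Fredholm operators are Fredholm with opposite index; you should say so explicitly. Otherwise the bookkeeping is sound, and your closing remark is apt: nothing from the paper's factorization or index machinery is needed here.
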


If $(P,Q)$ is a pair of orthogonal projections such that $(P-Q) \in \cB_1(\cH)$, then the 
associated {\it perturbation determinant},
\begin{align}
\begin{split} 
& D_{(P,Q)}(z) = \text{det}_{\cH}\big(I_{\cH} + (P-Q)(Q-zI_{\cH})^{-1} \big)     \lb{7.6a} \\
& \hspace*{1.45cm} = \text{det}_{\cH}\big((P-zI_{\cH})(Q-zI_{\cH})^{-1} \big),  \quad 
 z\in \bbC\backslash \{0,1\},   
 \end{split} 
\end{align}
is well-defined.  The {\it Krein--Lifshitz spectral shift function} $\xi_{(P,Q)}(\,\cdot\,)$ corresponding to the pair $(P,Q)$ is then given by
\begin{equation}\lb{7.7a}
\xi_{(P,Q)}(\lambda) = \lim_{\varepsilon \downarrow 0}\frac{1}{\pi} \arg [D_{(P,Q)}(\lambda+i\varepsilon)]\, 
\text{ for a.e.\ $\lambda \in \bbR$} 
\end{equation}
(w.r.t. Lebesgue measure), and $\xi_{(P,Q)}$ satisfies:
\begin{align}
\xi_{(P,Q)} \in L^1(\bbR),\quad \int_{\bbR} |\xi_{(P,Q)}(\lambda)|\, d\lambda \leq \|P-Q\|_{\cB_1(\cH)}.\lb{7.8a}
\end{align}
Moreover, integrating the spectral shift function allows one to recover the trace of $P-Q$, that is, 
\begin{equation}\lb{7.9a}
\int_{\bbR} \xi_{(P,Q)}(\lambda)\, d\lambda = \tr_{\cH}(P-Q).
\end{equation}
The properties in \eqref{7.8a} and \eqref{7.9a} follow from general considerations and do not rely on the fact that $P$ and $Q$ are orthogonal projections.  For details on the Krein--Lifshitz spectral shift function within the general context of trace class (and, more generally, resolvent comparable) perturbations of self-adjoint operators, we refer to \cite{BY93}, 
\cite[Ch.\ 8]{Ya92}, \cite{Ya07}, \cite[Sect.\ 0.9, Chs.\ 4, 5, 9]{Ya10}.  In the specific case at hand, where $P$ and $Q$ are orthogonal projections, the perturbation determinant and spectral shift function may be computed explicitly.

\begin{theorem}[\cite{AS94}]\lb{thm7.2}
Suppose $(P,Q)$ is a pair of orthogonal projections with $(P-Q) \in \cB_1(\cH)$. Then the following items hold.\\[1mm]
$(i)$ The perturbation determinant $D_{(P,Q)}(\cdot)$ is given by
\begin{align}
D_{(P,Q)}(z) = \bigg(\frac{z-1}{z} \bigg)^{m_1-m_{-1}},\quad z\in \bbC\backslash [0,1],\lb{7.10a}
\end{align}
and the spectral shift function is piecewise constant,
\begin{align}
\xi_{(P,Q)}(\lambda) = 
\begin{cases}
0,& \lambda \notin[0,1], \\
m_1-m_{-1},& \lambda \in [0,1],
\end{cases}\, \text{ for a.e.\ $\lambda\in \bbR$},      \lb{7.11a}
\end{align}
where $m_{\pm 1}$ are defined in \eqref{7.3}.\\[1mm]
$(ii)$ If $f$ is a measurable, complex-valued function which is continuous in neighborhoods of $z=0$ and $z=1$, then $[f(P)-f(Q)] \in \cB_1(\cH)$ and 
\begin{equation}\lb{7.12a}
\tr_{\cH}[f(P)-f(Q)] = [f(1)-f(0)]\tr_{\cH}(P-Q) = [f(1)-f(0)](m_1-m_{-1}).
\end{equation}
\end{theorem}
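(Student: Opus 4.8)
The plan is to reduce both parts to the classical structure theory for a pair of orthogonal projections (Halmos' ``two subspaces'' theorem \cite{Ha69}, which also underlies \cite{AS94}). Decompose $\cH$ orthogonally as $\cH = \cH_{00} \oplus \cH_{11} \oplus \cH_{10} \oplus \cH_{01} \oplus \cH_g$, where $\cH_{00} = \ker(P) \cap \ker(Q)$, $\cH_{11} = \ran(P) \cap \ran(Q)$, $\cH_{10} = \ran(P) \cap \ker(Q)$, $\cH_{01} = \ker(P) \cap \ran(Q)$ (each reducing both $P$ and $Q$), and on the ``generic part'' $\cH_g$ there is a unitary identification $\cH_g \cong \cH_0 \oplus \cH_0$ under which
\[
P|_{\cH_g} = \begin{pmatrix} I_{\cH_0} & 0 \\ 0 & 0 \end{pmatrix}, \qquad
Q|_{\cH_g} = \begin{pmatrix} C^2 & CS \\ SC & S^2 \end{pmatrix}, \qquad C = \cos\Theta, \ \ S = \sin\Theta,
\]
for a self-adjoint $\Theta$ in $\cH_0$ with $\sigma(\Theta) \subset (0, \pi/2)$. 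Since $P - Q$ equals $I_{\cH}$ on $\cH_{10}$ and $-I_{\cH}$ on $\cH_{01}$, the hypothesis $(P-Q) \in \cB_1(\cH)$ forces $\dim(\cH_{10}) = m_1 < \infty$ and $\dim(\cH_{01}) = m_{-1} < \infty$ (with $m_{\pm 1}$ as in \eqref{7.3}), while $P = Q$ on $\cH_{00} \oplus \cH_{11}$; moreover $(P-Q)|_{\cH_g} \in \cB_1(\cH_g)$, and reading off its $2 \times 2$ block form shows $S^2, CS \in \cB_1(\cH_0)$.

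For \eqref{7.10a} I would compute $D_{(P,Q)}(z) = {\det}_{\cH}\big((P - z I_{\cH})(Q - z I_{\cH})^{-1}\big)$ blockwise: the contributions from $\cH_{00} \oplus \cH_{11}$, $\cH_{10}$, $\cH_{01}$ are $1$, $\big(\tfrac{z-1}{z}\big)^{m_1}$, $\big(\tfrac{z-1}{z}\big)^{-m_{-1}}$, respectively, so everything hinges on showing that the $\cH_g$-contribution is $1$. Using $C^2 + S^2 = I_{\cH_0}$ and commutativity of the blocks one finds, for $z \in \bbC \backslash [0,1]$,
\[
(P - z I_{\cH})(Q - z I_{\cH})^{-1}\big|_{\cH_g}
= I_{\cH_g} + \begin{pmatrix} -S^2/z & CS/z \\ SC/(z-1) & S^2/(z-1) \end{pmatrix},
\]
a trace class perturbation of the identity; applying the block ``LU''/Schur-complement factorization to this $2 \times 2$ operator matrix, its Fredholm determinant equals ${\det}_{\cH_0}\big(I_{\cH_0} - S^2/z\big) \cdot {\det}_{\cH_0}\big((I_{\cH_0} - S^2/z)^{-1}\big) = 1$, the Schur complement collapsing to $(I_{\cH_0} - S^2/z)^{-1}$ after a short simplification. \textbf{This is the step I expect to take the most care}: one must verify that each triangular factor is again identity-plus-trace-class (so that determinant multiplicativity applies), which uses $S^2, CS \in \cB_1(\cH_0)$, and that $I_{\cH_0} - S^2/z$ is boundedly invertible, valid precisely because $z \in \bbC \backslash [0,1] \subseteq \bbC \backslash \sigma(S^2)$. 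Multiplying the four contributions yields $D_{(P,Q)}(z) = \big(\tfrac{z-1}{z}\big)^{m_1 - m_{-1}}$. The spectral shift function \eqref{7.11a} then follows from \eqref{7.7a} by taking boundary values of this explicit rational function: for $\lambda \in \bbR \backslash [0,1]$ the quotient $\tfrac{\lambda - 1}{\lambda}$ is positive, so $\arg[D_{(P,Q)}(\lambda + i 0)] = 0$ and $\xi_{(P,Q)}(\lambda) = 0$, while for $\lambda \in (0,1)$ one has $\arg\big(\tfrac{(\lambda - 1) + i\varepsilon}{\lambda + i \varepsilon}\big) \to \pi$ as $\varepsilon \downarrow 0$, whence $\xi_{(P,Q)}(\lambda) = m_1 - m_{-1}$; the two endpoints are Lebesgue-null.

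For part (ii), since $\sigma(P), \sigma(Q) \subseteq \{0,1\}$, the spectral theorem gives $f(P) = f(1) P + f(0)(I_{\cH} - P)$ and $f(Q) = f(1) Q + f(0)(I_{\cH} - Q)$ (only the behaviour of $f$ near $0$ and $1$ matters), so
\[
f(P) - f(Q) = [f(1) - f(0)](P - Q) \in \cB_1(\cH),
\qquad {\tr}_{\cH}[f(P) - f(Q)] = [f(1) - f(0)]\, {\tr}_{\cH}(P - Q).
\]
Finally, ${\tr}_{\cH}(P - Q) = m_1 - m_{-1}$ is immediate from Theorem \ref{t7.1}: \eqref{7.3a} with $n = 0$ gives ${\tr}_{\cH}(P - Q) = \ind(P,Q)$, and \eqref{7.2} identifies $\ind(P,Q) = m_1 - m_{-1}$ (alternatively it drops straight out of the block decomposition, $(P-Q)|_{\cH_g}$ being traceless by its form while $\cH_{10}, \cH_{01}$ contribute $+m_1$ and $-m_{-1}$). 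This gives \eqref{7.12a} and completes the proof.
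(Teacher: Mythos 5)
The paper does not prove Theorem~\ref{thm7.2}; it simply cites Avron--Seiler--Simon \cite{AS94}. Your argument is correct and is essentially the canonical proof, reducing matters to the Halmos two-subspaces decomposition (which also underlies \cite{AS94}). I verified the key computations: on the generic part $\cH_g$ one indeed has
$(P-Q)(Q-zI)^{-1}|_{\cH_g}=\begin{pmatrix}-S^2/z & CS/z\\ SC/(z-1)& S^2/(z-1)\end{pmatrix}$ (a direct check using $C^2+S^2=I$ and commutativity), the lower-left Schur complement of $I_{\cH_g}$ plus this matrix is exactly $(I-S^2/z)^{-1}$, and the triangular factors in the LU/Schur factorization are unipotent with strictly off-diagonal trace-class entries, so each has Fredholm determinant $1$; hence the $\cH_g$-contribution is $\det(I-S^2/z)\,\det\big((I-S^2/z)^{-1}\big)=1$. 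Combined with the elementary contributions from $\cH_{10}$, $\cH_{01}$, and $\cH_{00}\oplus\cH_{11}$ this gives \eqref{7.10a}, and part~(ii) follows just as you write from $\sigma(P),\sigma(Q)\subseteq\{0,1\}$.

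Two small points of rigor you should tighten. First, in deducing \eqref{7.11a} from \eqref{7.10a} you take the principal argument, but the correct branch in \eqref{7.7a} is the one for which $\log D_{(P,Q)}(z)$ is continuous in the upper half-plane and tends to $0$ as $|z|\to\infty$; with that normalization one has $\arg D_{(P,Q)}(\lambda+i0)=(m_1-m_{-1})\,\lim_{\varepsilon\downarrow0}\arg\!\big(\tfrac{\lambda-1+i\varepsilon}{\lambda+i\varepsilon}\big)=(m_1-m_{-1})\pi$ for $\lambda\in(0,1)$, which is what you want even when $|m_1-m_{-1}|>1$ (when the ``principal value'' of $\arg$ would wrap). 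Second, your appeal to \eqref{7.3a} ``with $n=0$'' presupposes $0\in\bbN$, which is a convention the paper does not necessarily adopt; but your fallback derivation, reading $\tr_{\cH}(P-Q)=m_1-m_{-1}$ directly off the block decomposition (with $(P-Q)|_{\cH_g}$ traceless since its diagonal blocks are $S^2$ and $-S^2$), is cleaner and makes that appeal unnecessary.
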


Finally, we compute the index of an operator $M_{(P,Q)}(\cdot)$ closely related to the 
Birman--Schwinger-type operator naturally associated with the pair of projections $(P,Q)$.

\begin{theorem}\lb{thm7.3}
Suppose that $(P,Q)$ is a pair of orthogonal projections with $(P-Q) \in \cB_1(\cH)$ and that 
$M_{(P,Q)}(\,\cdot\,):\bbC\backslash \{0,1\}\rightarrow \cB(\cH)$ is given by
\begin{align}  \lb{7.4}
\begin{split} 
M_{(P,Q)}(z):= (P-zI_{\cH})(Q-zI_{\cH})^{-1} = I_{\cH} + (P - Q)(Q-zI_{\cH})^{-1},& \\ 
z\in \bbC\backslash \{0,1\}.&
\end{split} 
\end{align}
Then the following items hold.\\[1mm]
$(i)$ $(P,Q)$ is a Fredholm pair.\\[1mm]
$(ii)$ $M_{(P,Q)} (\cdot)$ is finitely meromorphic, and $M_{(P,Q)}(z)\in \Phi(\cH)$, 
$z\in \bbC\backslash \{0,1\}$.\\[1mm]
$(iii)$ The index of $M_{(P,Q)}(\cdot)$ with respect to the counterclockwise oriented circle 
$C(z_0;\varepsilon)$, with $\varepsilon>0$ taken sufficiently small, is given by
\begin{equation}\lb{7.5}
\ind_{C(z_0;\varepsilon)} (M_{(P,Q)}(\cdot)) =
\begin{cases}
- \ind(P,Q),& z_0=0, \\
0,& z_0\in \bbC\backslash\{0,1\}, \\
\ind(P,Q),& z_0=1. 
\end{cases}  
\end{equation}
\end{theorem}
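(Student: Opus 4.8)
The plan is to establish the three items in order, using the structural results about $M_{(P,Q)}(\cdot)$ together with the index/trace machinery from Sections \ref{s4} and \ref{s5}. For $(i)$, the claim that $(P,Q)$ is a Fredholm pair is immediate from Theorem \ref{t7.1}$(i)$, since $(P-Q)\in\cB_1(\cH)\subset\cB_\infty(\cH)$. For $(ii)$, I would argue as follows. On $\bbC\backslash\{0,1\}$ the operator $(Q-zI_{\cH})^{-1}$ exists and is bounded (the spectrum of the orthogonal projection $Q$ is contained in $\{0,1\}$), hence $M_{(P,Q)}(z)=I_{\cH}+(P-Q)(Q-zI_{\cH})^{-1}$ is well-defined and analytic there, and since $(P-Q)\in\cB_1(\cH)\subset\cB_\infty(\cH)$, $M_{(P,Q)}(z)-I_{\cH}$ is compact, so $M_{(P,Q)}(z)\in\Phi(\cH)$ with index zero for all $z\in\bbC\backslash\{0,1\}$. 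The only possible poles of $M_{(P,Q)}(\cdot)$ are at $z=0$ and $z=1$; near each of these, $(Q-zI_{\cH})^{-1}$ has a simple pole with finite-rank residue (the residue at $z=0$ is $-P_{\ker(Q)}^{}$ up to sign convention, a finite-rank operator when $\ker(Q)$ or $\ker(P)$ is relevant — more precisely, the relevant residues are finite-rank because $(P-Q)$ is trace class, so $(P-Q)(Q-zI_{\cH})^{-1}$ has finite-rank principal part at $0$ and at $1$). Thus $M_{(P,Q)}(\cdot)$ is finitely meromorphic on $\bbC$, with possible poles only at $\{0,1\}$.

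For $(iii)$, the case $z_0\in\bbC\backslash\{0,1\}$ is trivial: there $M_{(P,Q)}(\cdot)$ is analytic and boundedly invertible (its inverse is $M_{(Q,P)}(z)$), so the index with respect to a small circle vanishes by \eqref{4.8}. For $z_0=0$ and $z_0=1$, the key observation is that the perturbation determinant from \eqref{7.6a}, $D_{(P,Q)}(z)=\det_{\cH}(M_{(P,Q)}(z))$, was computed explicitly in Theorem \ref{thm7.2}$(i)$ to be $\big((z-1)/z\big)^{m_1-m_{-1}}$ on $\bbC\backslash[0,1]$, hence on $\bbC\backslash\{0,1\}$ by analytic continuation of the meromorphic scalar function. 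Now I would invoke the operator-valued argument principle (as in \eqref{4.8}, or directly $\ind_{C(z_0;\varepsilon)}(M_{(P,Q)}(\cdot))=m(z_0;\det_{\cH}(M_{(P,Q)}(\cdot)))$, the scalar multiplicity from \eqref{3.21}): since $\det_{\cH}(M_{(P,Q)}(z))=\big((z-1)/z\big)^{m_1-m_{-1}}$ has a zero of order $m_1-m_{-1}$ at $z=1$ (if $m_1-m_{-1}>0$; a pole of order $m_{-1}-m_1$ if negative) and conversely at $z=0$, one reads off
\begin{equation*}
\ind_{C(1;\varepsilon)}(M_{(P,Q)}(\cdot)) = m_1-m_{-1}, \qquad
\ind_{C(0;\varepsilon)}(M_{(P,Q)}(\cdot)) = -(m_1-m_{-1}).
\end{equation*}
Combining with Theorem \ref{t7.1}$(ii)$, which gives $\ind(P,Q)=m_1-m_{-1}$, yields \eqref{7.5}.

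The main obstacle I anticipate is justifying the identification $\ind_{C(z_0;\varepsilon)}(M_{(P,Q)}(\cdot))=m(z_0;\det_{\cH}(M_{(P,Q)}(\cdot)))$ at $z_0\in\{0,1\}$, where $M_{(P,Q)}(\cdot)$ is genuinely meromorphic (not merely analytic), so one needs the meromorphic version of the operator-valued argument principle rather than the analytic one. This requires checking that $M_{(P,Q)}(\cdot)$ satisfies Hypothesis \ref{h4.1} on a neighborhood of $z_0$ and that it is boundedly invertible on a punctured disk about $z_0$ (so Hypothesis \ref{h4.3} holds and Definition \ref{d4.4} applies); bounded invertibility on the punctured disk follows from $M_{(P,Q)}(z)^{-1}=(Q-zI_{\cH})(P-zI_{\cH})^{-1}=M_{(Q,P)}(z)$, which is bounded for $z\notin\{0,1\}$. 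Once the meromorphic argument principle of \cite{GS71} (cf.\ \eqref{4.8}--\eqref{4.9}) is in force, the logarithmic-derivative trace integral reduces to the scalar multiplicity of $\det_{\cH}(M_{(P,Q)}(\cdot))$ via the same cyclicity-of-trace computation used in the proof of Theorem \ref{t3.11}, and the explicit formula from Theorem \ref{thm7.2}$(i)$ finishes the argument.
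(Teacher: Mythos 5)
Your proof follows the paper for items $(i)$ and $(ii)$ but takes a genuinely different route for item $(iii)$. The paper works with the operator-valued trace integral directly: it computes $M'_{(P,Q)}(z)M_{(P,Q)}(z)^{-1}=(P-Q)(Q-zI_{\cH})^{-1}(P-zI_{\cH})^{-1}$, applies cyclicity of the trace together with the resolvent identity to reduce the integrand to $\tr_{\cH}\big[(Q-zI_{\cH})^{-1}-(P-zI_{\cH})^{-1}\big]$, evaluates this trace by Theorem \ref{thm7.2}\,$(ii)$ with $f(x)=(x-z)^{-1}$ to get $\ind(P,Q)\big[(z-1)^{-1}-z^{-1}\big]$, and finishes by residue calculus. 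You instead pass through the perturbation determinant $D_{(P,Q)}(z)=\det_{\cH}(M_{(P,Q)}(z))$, invoking the explicit formula $D_{(P,Q)}(z)=\big((z-1)/z\big)^{m_1-m_{-1}}$ from Theorem \ref{thm7.2}\,$(i)$ together with the Jacobi-type identity $\tr_{\cH}\big(M'M^{-1}\big)=D_{(P,Q)}'/D_{(P,Q)}$ for trace-class perturbations of the identity, and then applying the scalar argument principle. Both routes are correct and both rely on Theorem \ref{thm7.2}, but on different parts of it; your approach is shorter once the determinant identity is granted, while the paper's computation is more self-contained and avoids an explicit appeal to the Jacobi formula.

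Two small remarks. First, the ``obstacle'' you flag is milder than you suggest: identifying $\ind_{C(z_0;\varepsilon)}(M_{(P,Q)}(\cdot))$ with $m(z_0;\det_{\cH}(M_{(P,Q)}(\cdot)))$ does not require a separate meromorphic operator-valued argument principle, because the contour $C(z_0;\varepsilon)$ lies entirely in $\bbC\backslash\{0,1\}$ where $M_{(P,Q)}(\cdot)$ is analytic, boundedly invertible, and a trace-class perturbation of $I_{\cH}$, so the Jacobi identity holds pointwise on the contour and only the scalar argument principle is then needed; you do still need to verify Hypothesis \ref{h4.3} so that $\ind_{C(z_0;\varepsilon)}$ is defined, which you address. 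Second, your justification in item $(ii)$ that ``the relevant residues are finite-rank because $(P-Q)$ is trace class'' does not follow as stated: from $(Q-zI_{\cH})^{-1}=-z^{-1}(I_{\cH}-Q)+(1-z)^{-1}Q$ the residues of $M_{(P,Q)}(\cdot)$ at $z=0$ and $z=1$ are $-(P-Q)(I_{\cH}-Q)$ and $-(P-Q)Q$, which are trace class but need not be finite rank unless $(P-Q)\in\cF(\cH)$. (The paper's proof of item $(ii)$ is also terse on this point.) This does not affect item $(iii)$, since the integrand $M'_{(P,Q)}M_{(P,Q)}^{-1}$ is trace class regardless and the index integral is well-defined, but the reasoning as written is not airtight.
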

\begin{proof}
Item $(i)$ follows immediately from Theorem \ref{t7.1}, owing to the assumption that 
$(P-Q)\in \cB_1(\cH)$.  The claims in item $(ii)$ follow from the representation
\begin{equation}\lb{7.6}
M_{(P,Q)} (z) = I_{\cH} + (P-Q)(Q-zI_{\cH})^{-1}, \quad z\in \bbC\backslash \{0,1\},
\end{equation}
which may be obtained by applying a standard resolvent identity in \eqref{7.4}, and the fact 
that $(I_{\cH}-K) \in \Phi(\cH)$ if $K\in \cB_{\infty}(\cH)$.  It remains to settle item $(iii)$.  To this end, 
one computes
\begin{align}
M_{(P,Q)} (z)^{-1} = (Q-zI_{\cH})(P-zI_{\cH})^{-1}, \quad z\in \bbC\backslash \{0,1\}.\lb{7.8}
\end{align}
and
\begin{align}\lb{7.9}
M_{(P,Q)}'(z)=(P-Q)(Q-zI_{\cH})^{-2},\quad z\in \bbC\backslash \{0,1\}.
\end{align}
As a result, 
\begin{align}\lb{7.10}
M_{(P,Q)}'(z) M_{(P,Q)}(z)^{-1} = (P-Q)(Q-zI_{\cH})^{-1}(P-zI_{\cH})^{-1},\quad z\in \bbC\backslash \{0,1\},
\end{align}
and one computes, for $\varepsilon>0$ sufficiently small,
\begin{align}
\ind_{C(z_0;\varepsilon)}(M_{(P,Q)}(\cdot))
&= \tr_{\cH}\bigg\{ \frac{1}{2\pi i}\ointctrclockwise_{C(z_0; \varepsilon)} 
dz \, M_{(P,Q)}'(z) M_{(P,Q)}(z)^{-1}\bigg\}\no\\
&= \tr_{\cH}\bigg\{ \frac{1}{2\pi i}\ointctrclockwise_{C(z_0; \varepsilon)} dz \, (P-Q)(Q-zI_{\cH})^{-1}(P-zI_{\cH})^{-1} \bigg\}\no\\
& = -\tr_{\cH}\bigg\{ \frac{1}{2\pi i}\ointctrclockwise_{C(z_0; \varepsilon)} dz \, (P-zI_{\cH})^{-1}(Q-P)(Q-zI_{\cH})^{-1} \bigg\}\no\\
& = \tr_{\cH}\bigg\{ \frac{1}{2\pi i}\ointctrclockwise_{C(z_0; \varepsilon)} dz \, \big[(Q-zI_{\cH})^{-1}-(P-zI_{\cH})^{-1}\big] \bigg\}\no\\
& = \frac{1}{2\pi i}\ointctrclockwise_{C(z_0; \varepsilon)} dz \, 
\tr_{\cH}\big[(Q-zI_{\cH})^{-1}-(P-zI_{\cH})^{-1}\big]  \no\\
& = \frac{\ind(P,Q)}{2\pi i}\ointctrclockwise_{C(z_0; \varepsilon)} dz \, \bigg(\frac{1}{z-1}-\frac{1}{z} \bigg).  \lb{7.11}
\end{align}
To obtain the last equality in \eqref{7.11} one applies \eqref{7.12a}.  Finally, \eqref{7.5} follows from \eqref{7.11}, \eqref{7.3aa}, and the residue calculus.
\end{proof}

We conclude by noting that the actual Birman--Schwinger operator associated with the pair 
$(P,Q)$ is then given by 
$K_{(P,Q)}(z):= - (P - Q)(Q-zI_{\cH})^{-1}$, and hence 
\begin{equation} 
M_{(P,Q)}(z) = I_{\cH} - K_{(P,Q)}(z) = I_{\cH} + (P - Q)(Q - z I_{\cH})^{-1}, 
\quad z \in \bbC \backslash \{0,1\}. 
\end{equation}

\medskip
\noindent {\bf Acknowledgments.} 
We are indebted to Yuri Latushkin and Alim Sukhtayev for discussions. We also 
thank the anonymous referee for very helpful suggestions and, particularly, for suggesting 
the current, shorter version of the proofs of Theorems~\ref{t3.7} and \ref{t3.8}. F.G. gratefully 
acknowledges the extraordinary hospitality of the Department of Mathematical Sciences of the Norwegian University of Science and Technology, Trondheim, during a series of visits. 


\end{document}